\theoremstyle{plain}
\newtheorem{theorem}{Theorem}
\newtheorem{lemma}[theorem]{Lemma}
\newtheorem{corollary}[theorem]{Corollary}
\newtheorem{problem}[theorem]{Question}
\newtheorem{conjecture}[theorem]{Conjecture}
\theoremstyle{definition}
\theoremstyle{remark}
\newcommand{\arxiv}[1]{\href{https://arxiv.org/abs/#1}{\texttt{arXiv:#1}}}
\DeclareMathOperator{\Circ}{Circ}
\definecolor{mauve}{rgb}{0.58,0,0.82}
\lstdefinestyle{pitonche} {
    language = Python,
    basicstyle = footnotesizettfamily,
    showspaces = false,
    showstringspaces = false,
    breakautoindent = true,
    flexiblecolumns = true,
    keepspaces = true,
    stepnumber = 1,
    xleftmargin = 0pt
}
\small\color{gray},
\tikzset{
    edge/.style={-{Latex[scale=1.7]}},
    dedge/.style={{Latex[scale=1.7]}-{Latex[scale=1.7]}},
}
\title{Complete resolution of the circulant nut graph order--degree existence problem}
\author{Ivan Damnjanovi\'c\thanks{The author is supported by Diffine LLC.}\\
\small University of Ni\v s, Faculty of Electronic Engineering,\\[-0.4ex]
\small Aleksandra Medvedeva 14, 18106 Ni\v s, Serbia\\[-0.4ex]
\small\tt ivan.damnjanovic@elfak.ni.ac.rs\\
\small Diffine LLC\\[-0.4ex]
\small 3681 Villa Terrace, San Diego, CA 92104, USA\\[-0.4ex]
\small\tt ivan@diffine.com}
\begin{document}

\maketitle

\begin{abstract}
A circulant nut graph is a non-trivial simple graph such that its adjacency matrix is a circulant matrix whose null space is spanned by a single vector without zero elements. Regarding these graphs, the order--degree existence problem can be thought of as the mathematical problem of determining all the possible pairs $(n, d)$ for which there exists a $d$-regular circulant nut graph of order $n$. This problem was initiated by Ba\v{s}i\'c et al.\ and the first major results were obtained by Damnjanovi\'c and Stevanovi\'c, who proved that for each odd $t \ge 3$ such that $t\not\equiv_{10}1$ and $t\not\equiv_{18}15$, there exists a $4t$-regular circulant nut graph of order $n$ for each even $n \ge 4t + 4$. Afterwards, Damnjanovi\'c improved these results by showing that there necessarily exists a $4t$-regular circulant nut graph of order $n$ whenever $t$ is odd, $n$ is even, and $n \ge 4t + 4$ holds, or whenever $t$ is even, $n$ is such that $n \equiv_4 2$, and $n \ge 4t + 6$ holds. In this paper, we extend the aforementioned results by completely resolving the circulant nut graph order--degree existence problem. In other words, we fully determine all the possible pairs $(n, d)$ for which there exists a $d$-regular circulant nut graph of order $n$.

\bigskip\noindent
{\bf Mathematics Subject Classification:} 05C50, 11C08, 12D05, 13P05. \\
{\bf Keywords:} circulant graph, nut graph, graph spectrum, graph eigenvalue, cyclotomic polynomial.
\end{abstract}

\section{Introduction}\label{intro}

In this paper we will consider all graphs to be undirected, finite, simple and non-null. Thus, every graph will have at least one vertex and there shall be no loops or multiple edges. Also, for convenience, we will take that each graph of order $n$ has the vertex set $\{0, 1, 2, \ldots, n-1\}$.

A graph $G$ is considered to be a circulant graph if its adjacency matrix $A$ has the form
\[
A = \begin{bmatrix}
    a_0 & a_1 & a_2 & \cdots & a_{n-1}\\
    a_{n-1} & a_0 & a_1 & \cdots & a_{n-2}\\
    a_{n-2} & a_{n-1} & a_0 & \cdots & a_{n-3}\\
    \vdots & \vdots & \vdots & \ddots & \vdots\\
    a_1 & a_2 & a_3 & \dots & a_0
\end{bmatrix}.
\]
Here, we clearly have $a_0 = 0$, as well as $a_j = a_{n-j}$ for all $j = \overline{1, n-1}$. A concise way of describing a circulant graph is by taking into consideration the set of all the values $1 \le j \le \frac{n}{2}$ such that $a_j = a_{n-j} = 1$. We shall refer to this set as the generator set of a circulant graph and we will use $\mathrm{Circ}(n, S)$ to denote the circulant graph of order $n$ whose generator set is $S$.

A nut graph is a non-trivial graph whose adjacency matrix has nullity one and is such that its non-zero null space vectors have no zero elements, as first described by Sciriha in \cite{Sciriha}. Bearing this in mind, a circulant nut graph is simply a nut graph whose adjacency matrix additionally represents a circulant matrix. Ba\v{s}i\'c et al.\ \cite{Basic} initiated the study of these graphs by providing several results, alongside the following conjecture.

\begin{conjecture}[Ba\v{s}i\'c et al.\ \cite{Basic}]\label{basic}
For every $d$, where $d \equiv_4 0$, and for every even $n,\, n \ge d+4$, there exists a circulant nut graph $\mathrm{Circ}(n, \{s_1, s_2, s_3, \ldots, s_{\frac{d}{2}}\})$ of degree $d$.
\end{conjecture}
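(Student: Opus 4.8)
The plan is to first recast the nut condition for a circulant graph into a purely arithmetic statement about cyclotomic polynomials. Writing $\omega = e^{2\pi i / n}$, the eigenvalues of $\mathrm{Circ}(n, S)$ are $\lambda_k = \sum_{s \in S}(\omega^{ks} + \omega^{-ks})$ for $k = \overline{0, n-1}$, so that $\lambda_k = P_S(\omega^k)$ with $P_S(x) = \sum_{s \in S}(x^s + x^{n-s})$. Since $\lambda_k = \lambda_{n-k}$, any vanishing eigenvalue with $k \notin \{0, n/2\}$ occurs with multiplicity at least two; as $\lambda_0 = d \neq 0$, a nullity-one circulant graph must have $n$ even with the kernel coming solely from $k = n/2$, whose Fourier eigenvector $(1, -1, 1, -1, \ldots)$ is nowhere zero. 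Hence $\mathrm{Circ}(n, S)$ is a nut graph precisely when $n$ is even, $P_S(-1) = 0$, and $P_S(\zeta) \neq 0$ for every root of unity $\zeta$ of order $e$ with $e \mid n$ and $e \geq 3$; equivalently, reading $P_S$ modulo $x^n - 1$, the nut property becomes $\gcd\bigl(P_S(x), x^n - 1\bigr) = x + 1$. Because $P_S(-1) = 2(\#\{s \in S : s \text{ even}\} - \#\{s \in S : s \text{ odd}\})$, the requirement $P_S(-1) = 0$ already forces $|S| = d/2$ to be even, recovering the necessity of $d \equiv_4 0$; combined with the parity argument this yields the necessary conditions $d \equiv_4 0$ and $n$ even, and elementary considerations give the lower bound $n \geq d + 4$.

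With necessity in hand, the task becomes constructive: for each admissible pair $(n, d)$ exhibit a generator set $S$ with $P_S(-1) = 0$ and $\Phi_e(x) \nmid P_S(x)$ for all divisors $e \geq 3$ of $n$. Writing $d = 4t$, the results of Damnjanovi\'c and of Damnjanovi\'c--Stevanovi\'c quoted above already cover every odd $t$ (all even $n \geq 4t + 4$) and the even-$t$ subcase $n \equiv_4 2$, $n \geq 4t + 6$. I would therefore isolate the genuinely open regime, namely $t$ even together with $n \equiv_4 0$ and $n \geq 4t + 4$, and concentrate the construction there. The feature that distinguishes this regime is that $4 \mid n$, so $\Phi_4 \mid x^n - 1$ and the extra constraint $P_S(i) = 2(\#\{s \equiv_4 0\} - \#\{s \equiv_4 2\}) \neq 0$ must hold simultaneously with $P_S(-1) = 0$; more generally every $2$-power and every even composite dividing $n$ contributes its own non-vanishing requirement.

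For the construction I would assemble $S$ from a small number of structured blocks — short intervals or arithmetic-progression fragments — chosen so that $P_S(x)$ telescopes into closed geometric form on each block, making the values $P_S(\zeta)$ explicitly computable at every prospective cyclotomic root. The parity and mod-$4$ budgets would be met by fixing a constant-size ``core'' set realizing the desired residue distribution and disposing of the small fixed factors $\Phi_3, \Phi_4, \ldots$, followed by an inductive ``padding'' step that raises the order by a fixed increment (here $4$, since $n$ runs through a single residue class modulo $4$) while adding generators in a way that provably keeps $\gcd(P_S, x^n - 1)$ equal to $x + 1$. This induction on $n$ with fixed step reduces the whole regime to finitely many base orders per $t$, to be settled by a uniform argument or, in the smallest cases, by direct computation.

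The hard part will be guaranteeing $\Phi_e(x) \nmid P_S(x)$ for all divisors $e \geq 3$ of $n$ at once, uniformly in both $n$ and $t$. Non-vanishing at a single $\zeta$ is routine, but the number of cyclotomic factors grows with $n$, and the rigid parity constraints sharply restrict how freely the block structure of $S$ may be tuned to dodge all of them; the even-$t$, $n \equiv_4 0$ case is exactly where these constraints bite hardest, since the very parities forced by $\Phi_2$ tend to push $P_S$ toward a zero at the $2$-power roots. I expect the decisive step to be a lemma showing that a suitably normalized family of generator sets keeps $P_S(\zeta)$ bounded away from $0$ for \emph{every} $\zeta \neq -1$ of order dividing $n$, most plausibly by reducing $P_S(\zeta)$ to a Chebyshev- or Dirichlet-kernel-type expression whose magnitude or sign can be controlled, and then clearing the finitely many residue classes and base orders that such a reduction leaves behind.
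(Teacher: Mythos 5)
The statement you are asked about is a conjecture of Ba\v{s}i\'c et al., and it is \emph{false}: the paper records (citing Damnjanovi\'c and Stevanovi\'c) that for every $d$ with $8 \mid d$ a $d$-regular circulant nut graph cannot have order below $d+6$, and moreover that $\mathcal{N}_8 = \{14\} \cup \{ n \in \mathbb{N} \colon 2 \mid n \land n \ge 18 \}$, so the pairs $(n,d) = (d+4,d)$ with $8 \mid d$ and the pair $(16,8)$ are genuine counterexamples. Your proposal sets out to \emph{prove} the conjecture for every even $n \ge d+4$, so it cannot succeed as written. The error is localized in the sentence claiming that ``elementary considerations give the lower bound $n \ge d+4$'' together with the decision to treat every even $n \ge d+4$ as admissible. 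Concretely, for $d = 4t$ with $t$ even and $n = 4t+4$, the generator set must be $\{1,2,\ldots,2t+1\} \setminus \{s\}$ for a single odd $s$ (since $\{1,\ldots,\tfrac{n}{2}-1\}$ has $t+1$ odd and $t$ even elements and the nut condition forces $t$ of each), so its even elements are exactly $2,4,\ldots,2t$; then $P_S(i) = 2\sum_{j=1}^{t} (-1)^j = 0$ because $t$ is even, while $4 \mid n$ guarantees that $i$ is an $n$-th root of unity, killing the nut property. This is precisely the mod-$4$ obstruction you flag in your second paragraph, but you treat it as a constraint to be engineered around rather than recognizing that it is unavoidable at $n = d+4$ when $8 \mid d$.

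The rest of your outline --- necessity of $2 \mid n$ and $4 \mid d$ via the Fourier eigenvectors, reduction to $\Phi_e(x) \nmid P_S(x)$ for divisors $e \ge 3$ of $n$, block-structured generator sets with an induction that steps through a residue class of orders, and cyclotomic non-divisibility as the hard uniform step --- is in the right spirit and broadly matches how the paper proves its \emph{corrected} main result. The paper isolates the same regime ($t$ even, $4 \mid n$), but starts at $n \ge 4t+8$ and splits it into $n = 4t+8$, $8 \mid n$ with $n \ge 4t+16$, and $n \equiv_8 4$ with $n \ge 4t+12$, establishing the non-divisibility by cyclotomic polynomials via the Filaseta--Schinzel theorem on lacunary polynomials rather than any Chebyshev- or Dirichlet-kernel magnitude estimate. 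No amount of construction, however, can close the gap above: the correct target is Theorem \ref{main_theorem}, not Conjecture \ref{basic}.
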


Damnjanovi\'c and Stevanovi\'c \cite[Lemma 18]{Damnjanovic_1} quickly disproved this conjecture by showing that for each $d$ such that $8 \mid d$, a $d$-regular circulant nut graph cannot have an order that is below $d+6$. However, Conjecture \ref{basic} did indirectly bring up an interesting question: \textit{``What are all the pairs $(n, d)$ for which there exists a $d$-regular circulant nut graph of order $n$?''}. Henceforth, we shall refer to this mathematical problem as the circulant nut graph order--degree existence problem, and we will use $\mathcal{N}_d$ to denote the set of all the orders that a $d$-regular circulant nut graph can have, for each $d \in \mathbb{N}_0$.

\newpage
Regarding the aforementioned problem, there are several basic facts that can quickly be noticed, as demonstrated by Damnjanovi\'c and Stevanovi\'c \cite{Damnjanovic_1}. First of all, it is easy to show that every $d$-regular circulant nut graph of order $n$ must satisfy $4 \mid d$ and $2 \mid n$. Moreover, for any odd $t \in \mathbb{N}$, a $4t$-regular circulant nut graph cannot have an order below $4t + 4$, while for any even $t \in \mathbb{N}$, such a graph cannot have an order smaller than $4t + 6$, as already discussed.

Furthermore, Damnjanovi\'c and Stevanovi\'c \cite{Damnjanovic_1} have managed to construct a $4t$-regular circulant nut graph of order $n$ for each even $n \ge 4t + 4$, provided $t$ is odd, $t \not\equiv_{10} 1$ and $t \not\equiv_{18} 15$. This result is disclosed in the following theorem.

\begin{theorem}[Damnjanovi\'c and Stevanovi\'c \cite{Damnjanovic_1}]\label{damnjanovic_theorem_1}
For each odd $t \ge 3$ such that $t\not\equiv_{10} 1$ and $t\not\equiv_{18} 15$,
the circulant graph $\Circ(n, \{1, 2, 3, \ldots, 2t+1\} \setminus \{ t \})$ is a nut graph for each even $n \ge 4t+4$.
\end{theorem}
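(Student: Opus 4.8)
The plan is to diagonalise the circulant adjacency matrix explicitly and reduce the nut property to two statements about eigenvalues. Recall that $\Circ(n,S)$ has eigenvalues $\lambda_j=\sum_{s\in S}2\cos\bigl(\tfrac{2\pi js}{n}\bigr)$ for $j=\overline{0,n-1}$, with eigenvectors $v_j=(\omega^{jk})_{k=0}^{n-1}$ where $\omega=e^{2\pi i/n}$. Since cosine is even, $\lambda_j=\lambda_{n-j}$, so any vanishing eigenvalue with $j\notin\{0,\tfrac n2\}$ is automatically accompanied by a second one; because $\lambda_0=2|S|=4t\neq 0$, the nullity can be one only if the unique zero eigenvalue sits at $j=\tfrac n2$. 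Its eigenvector is $v_{n/2}=\bigl((-1)^k\bigr)_{k}$, which has no zero entries, so the kernel condition in the definition of a nut graph holds for free. Thus it remains to prove (i) $\lambda_{n/2}=0$ and (ii) $\lambda_j\neq 0$ for every $j\in\{1,\dots,n-1\}\setminus\{\tfrac n2\}$. I would note immediately that the hypothesis $n\ge 4t+4$ is precisely what forces $2t+1<\tfrac n2$, so that the $2t$ elements of $S$ are distinct generators each strictly below $\tfrac n2$ and the graph is genuinely $4t$-regular.

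For (i), evaluating at $j=\tfrac n2$ gives $\lambda_{n/2}=2\sum_{s\in S}(-1)^s$. Since $\sum_{s=1}^{2t+1}(-1)^s=-1$ and $t$ is odd, deleting the term $s=t$ (for which $(-1)^t=-1$) yields $\sum_{s\in S}(-1)^s=0$; this is the sole point at which the parity of $t$ is used.

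The heart of the matter is (ii), which I would recast algebraically. Writing $\zeta=\omega^j$, a primitive $d$-th root of unity with $d=\tfrac{n}{\gcd(n,j)}\ge 3$, we have $\lambda_j=\Phi_S(\zeta)$ where $\Phi_S(x)=\sum_{s\in S}(x^s+x^{-s})$. Clearing denominators yields the palindromic integer polynomial
\[
\Psi(x)=x^{2t+1}\Phi_S(x)=\sum_{k=0}^{4t+2}x^k-x^{t+1}-x^{2t+1}-x^{3t+1}=\frac{x^{4t+3}-1}{x-1}-x^{t+1}\bigl(1+x^{t}+x^{2t}\bigr)
\]
of degree $4t+2$, and $\lambda_j=0$ if and only if $\Psi(\zeta)=0$, i.e.\ if and only if the cyclotomic polynomial $\Phi_d$ divides $\Psi$. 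The decisive feature is that this divisibility does not involve $n$: once it is shown that no $\Phi_d$ with $d\ge 3$ divides $\Psi$, statement (ii) holds simultaneously for every even $n\ge 4t+4$, since each admissible index $j$ merely contributes a root of unity whose order $d\ge 3$ divides $n$.

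It therefore remains to determine $\{d\ge 3:\Phi_d\mid\Psi\}$ and to check that the two excluded congruence classes empty it. Using the closed form, $\Psi(\zeta)=0$ is most easily analysed by first asking whether $\zeta^{4t+3}=1$: in that case the geometric sum vanishes and the condition collapses to $1+\zeta^t+\zeta^{2t}=0$, i.e.\ $\operatorname{ord}(\zeta^t)=3$, which together with $d\mid 4t+3$ forces $d=9$ and $t\equiv_9 6$; since $t$ is odd this is exactly $t\equiv_{18}15$. Otherwise one must treat $\tfrac{\zeta^{4t+3}-1}{\zeta-1}=\zeta^{t+1}\bigl(1+\zeta^t+\zeta^{2t}\bigr)$ directly, and I expect this to single out $d=10$ together with $t\equiv_{10}1$. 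Establishing that these are the only surviving cyclotomic factors — excluding every other $d$ outright — is the delicate finite number-theoretic case analysis that I anticipate to be the main obstacle. Granting it, the hypotheses $t\not\equiv_{10}1$ and $t\not\equiv_{18}15$ guarantee $\Phi_d\nmid\Psi$ for all $d\ge 3$, which yields (ii) and completes the proof.
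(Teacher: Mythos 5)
Your reduction is sound and follows essentially the route the literature takes. Note that the paper itself does not prove Theorem \ref{damnjanovic_theorem_1} --- it is quoted from \cite{Damnjanovic_1} --- but your steps (i) and (ii) are exactly the content of Lemma \ref{damnjanovic_lemma_1} combined with Eq.\ (\ref{polynomial_formula}), and your observation that the divisibility $\Phi_d(x)\mid\Psi(x)$ is independent of $n$ is the correct way to make the statement uniform over all even $n\ge 4t+4$. Your identification of the exceptional classes is also right: in the case $d\mid 4t+3$ the condition forces $d=9$ and $t\equiv_9 6$, hence $t\equiv_{18}15$ for odd $t$, and one checks directly that $\Phi_{10}(x)\mid\Psi(x)$ occurs exactly when $t\equiv_{10}1$.

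There is, however, a genuine gap, and it sits precisely where you write ``granting it'': you never show that $\Phi_d(x)\nmid\Psi(x)$ for every $d\ge 3$ outside these two situations, and that exclusion is the entire substance of the theorem. It is not a finite verification that can be deferred as routine: $d$ ranges over all integers $\ge 3$ and $\Psi$ depends on $t$, so one must first bound the admissible $d$ before any case analysis can begin. The workable template (and the one this paper uses for its own analogous polynomials $Q_t$, $R_t$, $U_t$, $W_t$) is to observe that $(x-1)\Psi(x)=x^{4t+3}-x^{3t+2}+x^{3t+1}-x^{2t+2}+x^{2t+1}-x^{t+2}+x^{t+1}-1$ is lacunary, with only $8$ nonzero terms, invoke the Filaseta--Schinzel theorem (Theorem \ref{filaseta}) to strip away large prime factors of $d$, and then eliminate the remaining finitely many smooth values of $d$ by analysing the exponent multiset modulo $d$, in the spirit of Lemmas \ref{unique_remainder_1}--\ref{concrete_b_1}. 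Without an argument of this kind, your Case 2 (``I expect this to single out $d=10$'') is an unsubstantiated expectation rather than a proof, so the proposal as written does not establish the theorem.
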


Thus, Theorem \ref{damnjanovic_theorem_1} fully determines $\mathcal{N}_{4t}$ for infinitely many odd values of $t$. On top of that, Damnjanovi\'c and Stevanovi\'c \cite[Proposition 19]{Damnjanovic_1} have also found the set
\begin{equation}\label{d_is_8}
    \mathcal{N}_8 = \{14\} \cup \{ n \in \mathbb{N} \colon 2 \mid n \land n \ge 18 \}.
\end{equation}
In this scenario, it is interesting to notice that a surprising ``irregularity'' exists due to the absence of an $8$-regular circulant nut graph of order $16$. 

Afterwards, Damnjanovi\'c \cite{Damnjanovic_2} succeeded in improving the previously disclosed results by finding the set $\mathcal{N}_{4t}$ for each odd $t \in \mathbb{N}$:
\[
    \mathcal{N}_{4t} = \{ n \in \mathbb{N} \colon 2 \mid n \land n \ge 4t + 4 \} \qquad (\forall t \in \mathbb{N},\, 2 \nmid t).
\]
This result is an immediate corollary of the next two theorems.

\begin{theorem}[Damnjanovi\'c \cite{Damnjanovic_2}]\label{damnjanovic_theorem_2}
    For each odd $t \in \mathbb{N}$ and $n \ge 4t + 4$ such that $4 \mid n$, the circulant graph
    \[
        \Circ\left(n, \{1, 2, \ldots, t-1\} \cup \left\{\frac{n}{4}, \frac{n}{4} + 1 \right\} \cup \left\{\frac{n}{2} - (t-1), \ldots, \frac{n}{2} - 2, \frac{n}{2} - 1 \right\}\right)
    \]
    must be a $4t$-regular nut graph of order $n$.
\end{theorem}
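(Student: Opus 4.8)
The plan is to work directly with the spectrum of the circulant matrix. For $\Circ(n,S)$ the eigenvalues are $\lambda_j=\sum_{s\in S}\big(\omega^{js}+\omega^{-js}\big)$, $0\le j\le n-1$, where $\omega=e^{2\pi i/n}$, with the Fourier vectors $v_j=(\omega^{jk})_{k=0}^{n-1}$ as eigenvectors. Since the matrix is real symmetric, all $\lambda_j$ are real and $\lambda_j=\lambda_{n-j}$; hence any zero eigenvalue with $0<j<\tfrac n2$ repeats at index $n-j$, so nullity one can occur only when the unique zero sits at the self-paired index $j=\tfrac n2$, whose eigenvector $v_{n/2}=(1,-1,1,\dots,-1)$ automatically has no zero entry. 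This reduces the theorem to three claims: (i) $|S|=2t$, so the graph is $4t$-regular; (ii) $\lambda_{n/2}=0$; and (iii) $\lambda_j\ne0$ for every $j\notin\{0,\tfrac n2\}$ (note $\lambda_0=2|S|\ne0$).

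Claim (i) is the count $|S|=(t-1)+2+(t-1)=2t$. For the rest I would substitute $\zeta=\omega^j$ and exploit the symmetry of $S$: using $\omega^{jn/2}=(-1)^j$ one checks that the block $\{\tfrac n2-(t-1),\dots,\tfrac n2-1\}$ contributes exactly $(-1)^j$ times the block $\{1,\dots,t-1\}$, so with $\omega^{jn/4}=i^{\,j}$ one obtains
\[
\lambda_j=\big(1+(-1)^j\big)\,2\sum_{s=1}^{t-1}\cos\tfrac{2\pi js}{n}\;+\;2\,\mathrm{Re}\big(i^{\,j}(1+\omega^j)\big).
\]
When $j$ is odd the first summand vanishes and the second collapses to $\lambda_j=\pm2\sin\tfrac{2\pi j}{n}$, which is nonzero because $\sin\tfrac{2\pi j}{n}=0$ only for $j\in\{0,\tfrac n2\}$ and $\tfrac n2$ is even. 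A short parity computation (each of the three blocks has vanishing signed parity $\sum(-1)^s$, since $t-1$ is even and $4\mid n$) gives $\lambda_{n/2}=0$, settling (ii) and the odd half of (iii).

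The crux is (iii) for even $j$. Writing $j=2\ell$ and $N=\tfrac n2$, the value $\lambda_{2\ell}$ depends only on $y=\omega^{2\ell}$, a generic $N$-th root of unity, and equals $\widetilde R(y)$ for the integer polynomial
\[
\widetilde R(y)=2\sum_{s=1}^{t-1}\big(y^{s}+y^{\,N-s}\big)+y^{\,N/2-1}(y+1)^2\pmod{y^N-1}.
\]
Thus I must show $\widetilde R(y)\ne0$ at every $N$-th root of unity other than $y=-1$. Since $\widetilde R\in\mathbb Z[y]$, it vanishes at a primitive $d$-th root of unity precisely when $\Phi_d\mid\widetilde R$, so the goal becomes: among the divisors $d\mid N$, the only one with $\Phi_d\mid\widetilde R$ is $d=2$. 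The cases $d=1$ (where $\widetilde R(1)=4t$) and $d=2$ (where $\widetilde R(-1)=0$, from the same parity check) are immediate, and everything reduces to proving $\Phi_d\nmid\widetilde R$ for every $d\ge3$ dividing $N$.

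To attack this I would evaluate $\widetilde R$ on a whole Galois orbit. For a primitive $d$-th root $e^{2\pi i a/d}$ one finds, using $e^{\pi i Na/d}=(-1)^{N/d}$ (valid for all $a$ coprime to $d$, as $N/d$ odd forces $d$ even and hence $a$ odd), that
\[
\widetilde R\big(e^{2\pi i a/d}\big)=4\sum_{s=1}^{t-1}\cos\tfrac{2\pi as}{d}+(-1)^{N/d}\,2\big(1+\cos\tfrac{2\pi a}{d}\big),
\]
so $\Phi_d\mid\widetilde R$ would force all of these to vanish simultaneously. Summing over $a$ coprime to $d$ converts the first term into Ramanujan sums and yields an explicit integer trace $T_d=4\sum_{s=1}^{t-1}c_d(s)+(-1)^{N/d}2\big(\varphi(d)+\mu(d)\big)$, and whenever $T_d\ne0$ the divisibility is impossible. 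The main obstacle is to make this uniform in $d$: a magnitude estimate kills all large $d$ (for small $2\pi/d$ the right-hand side is close to $4(t-1)\pm4>0$ once $t\ge1$), but for the moderate $d$ at which the cosine sum can resonate one must rule out a vanishing trace, and in any residual case where $T_d$ itself vanishes one has to descend to a finer invariant—a higher symmetric function of the orbit $\{\widetilde R(\zeta)\}$, or the sparse two-block shape of $\widetilde R$—to preclude the simultaneous vanishing of all conjugates. Pinning down and dispatching this finite list of exceptional $d$ is where the real work lies.
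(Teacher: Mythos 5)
Your reduction is sound and matches the standard framework (it is essentially Lemma~\ref{damnjanovic_lemma_1} together with Eq.~(\ref{polynomial_formula})): the nullity-one condition does force the unique zero eigenvalue to sit at $j=n/2$, your closed form for $\lambda_j$ is correct, the parity check giving $\lambda_{n/2}=0$ is correct, and the observation that odd $j$ collapses to $\mp 2\sin\frac{2\pi j}{n}\neq 0$ is a clean and valid way to dispose of half the indices. The problem is that everything after that — the actual content of the theorem — is not proved. Showing $\Phi_d\nmid\widetilde R$ for every $d\ge 3$ dividing $N$ is where the entire difficulty lives, and your proposal only offers a necessary condition (the Galois trace $T_d\neq 0$) without determining when it holds. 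Since $\sum_{s=1}^{t-1}c_d(s)$ depends on $t\bmod d$ and genuinely vanishes for infinitely many pairs $(d,t)$, the trace test fails in cases you neither enumerate nor dispatch; you acknowledge this yourself and defer to an unspecified ``finer invariant.'' Moreover, the claim that ``a magnitude estimate kills all large $d$'' does not survive scrutiny: the divisors $d$ of $N=n/2$ can be as large as $N\approx 2t$, so $\sum_{s=1}^{t-1}\cos\frac{2\pi as}{d}$ is a Dirichlet-kernel sum over up to half a period, which is nowhere near $t-1$ and can be small or negative; there is no regime of ``large $d$'' that a crude bound handles uniformly in $t$.

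For comparison, the cited proof of this statement (and the proofs of the analogous Theorems~\ref{main_th_1} and \ref{main_th_2} in this paper) closes exactly this gap by a different mechanism: rewrite $P(\zeta)=0$ as the vanishing of a \emph{lacunary} integer polynomial with a bounded number of nonzero terms, apply the Filaseta--Schinzel theorem (Theorem~\ref{filaseta}) to strip large prime factors from any candidate $b$ with $\Phi_b\mid P$, rule out the residual $\Phi_p$ and $\Phi_{2p}$ cases by a degree/term-count argument, and verify the remaining finite explicit list of $b$ by computation. Your trace idea is genuinely different, but as written it is a heuristic for the hard step rather than a proof of it.
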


\begin{theorem}[Damnjanovi\'c \cite{Damnjanovic_2}]\label{damnjanovic_theorem_3}
    For each $t \in \mathbb{N}$ and $n \ge 4t + 6$ such that $n \equiv_4 2$, the circulant graph
    \[
        \Circ\left(n, \{1, 2, \ldots, t-1\} \cup \left\{\frac{n+2}{4}, \frac{n+6}{4} \right\} \cup \left\{\frac{n}{2} - (t-1), \ldots, \frac{n}{2} - 2, \frac{n}{2}-1 \right\}\right)
    \]
    must be a $4t$-regular nut graph of order $n$.
\end{theorem}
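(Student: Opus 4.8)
The plan is to verify directly that the prescribed circulant graph meets the two defining requirements of a nut graph, working entirely through the spectrum of the circulant adjacency matrix. Writing $\omega = e^{2\pi i/n}$, recall that the eigenvalues of $\Circ(n,S)$ are $\lambda_j = \sum_{s\in S}(\omega^{js} + \omega^{-js})$ for $j = 0, 1, \ldots, n-1$, with eigenvectors $(1, \omega^j, \omega^{2j}, \ldots, \omega^{(n-1)j})^{\mathsf T}$. Since the cosine is even we have $\lambda_j = \lambda_{n-j}$, so any zero eigenvalue with $j \notin \{0, n/2\}$ is automatically at least double; as $\lambda_0 = 2|S| = 4t \neq 0$, nullity one forces the unique zero to occur at $j = n/2$ (note $n$ is even). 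Its eigenvector is $(1, -1, 1, -1, \ldots)$, which has no zero entry, so the kernel condition for a nut graph holds for free. Hence the theorem reduces to proving the two statements $\lambda_{n/2} = 0$ and $\lambda_j \neq 0$ for every $j$ with $0 < j < n$ and $j \neq n/2$.

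The second step is to recast these spectral conditions in the language of cyclotomic polynomials, which is where the real leverage lies. Setting $f_S(x) = \sum_{s\in S}(x^s + x^{n-s}) \in \mathbb{Z}[x]$, one has $\lambda_j = f_S(\omega^j)$, and $\omega^j$ is a primitive $m$-th root of unity for $m = n/\gcd(n,j)$. Because $f_S$ has integer coefficients, $f_S$ vanishes at one primitive $m$-th root of unity if and only if $\Phi_m(x) \mid f_S(x)$, in which case it vanishes at all $\varphi(m)$ of them. Consequently the graph is a nut graph precisely when $\Phi_2(x) = x + 1$ divides $f_S(x)$ while $\Phi_m(x) \nmid f_S(x)$ for every divisor $m > 2$ of $n$ (the case $m = 1$ being automatic since $f_S(1) = 4t$).

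Next I would put $f_S$ into closed form. The two blocks $\{1, \ldots, t-1\}$ and $\{n/2 - (t-1), \ldots, n/2-1\}$ contribute geometric progressions that telescope, and on the roots of unity one obtains, after reducing modulo $x^n - 1$,
\[
f_S(x) \equiv (1 + x^{n/2})\sum_{k=1}^{t-1}(x^k + x^{-k}) + (1 + x)\left(x^{(n+2)/4} + x^{-(n+6)/4}\right).
\]
Since $n \equiv 2 \pmod 4$, the exponent $n/2$ is odd, so $1 + x^{n/2}$, and hence the whole right-hand side, is divisible by $1 + x$; this settles $\Phi_2 \mid f_S$, equivalently $\lambda_{n/2} = 0$. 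Evaluated at $x = -1$ this is the condition $\sum_{s\in S}(-1)^s = 0$, which I would confirm by a parity count: the two intervals contribute $t-1$ odd and $t-1$ even generators in total, and the consecutive pair $(n+2)/4,\,(n+6)/4$ supplies exactly one more of each, giving $t$ odd and $t$ even elements irrespective of the parity of $t$. This part is routine.

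The crux, and the step I expect to be the genuine obstacle, is showing $\Phi_m \nmid f_S$ for all divisors $m > 2$, i.e.\ $\lambda_j \neq 0$ for every $j \neq 0, n/2$. For odd $j$ the first sum is annihilated by the factor $1 + x^{n/2} = 1 + (-1)^j = 0$, and the remaining terms collapse, via a sum-to-product identity, to $\pm 4\sin(2\pi j/n)\cos(\pi j/n)$, which vanishes only at $j = n/2$; this case is easy. The difficulty is concentrated in the even indices $j = 2, 4, \ldots, n-2$, where writing $\phi = \pi j/n$ gives
\[
\lambda_j = \frac{2\sin\!\big((2t-1)\phi\big)}{\sin\phi} - 2 + 2(-1)^{j/2}\big(\cos\phi + \cos 3\phi\big),
\]
and one must prove this never equals zero. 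My plan is to rule out each candidate cyclotomic factor by pitting the rigid Dirichlet-kernel term $\sin((2t-1)\phi)/\sin\phi$ against the bounded special-generator contribution $2(-1)^{j/2}(\cos\phi + \cos 3\phi)$, through a delicate case analysis on $j \bmod 4$ and on $m = n/\gcd(n,j)$ together with either sharp trigonometric estimates or explicit cyclotomic-polynomial identities of the kind developed in the earlier works. Handling all divisors $m$ of $n$ uniformly, especially the small moduli and those whose primitive roots interact with the quarter-point exponents $(n+2)/4$ and $(n+6)/4$, is the technical heart of the argument and the main obstacle.
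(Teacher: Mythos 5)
Your reduction is set up correctly: the kernel-vector observation, the translation into cyclotomic divisibility of $f_S$, the closed form $(1+x^{n/2})\sum_{k=1}^{t-1}(x^k+x^{-k}) + (1+x)\bigl(x^{(n+2)/4}+x^{-(n+6)/4}\bigr)$, the parity count giving $t$ odd and $t$ even generators, and the disposal of the odd indices $j$ all check out. But note that the present paper does not prove this theorem at all --- it is imported verbatim from the cited earlier work --- so the relevant comparison is with the machinery that paper (and Sections 4--5 here, for the analogous constructions) actually deploys. Against that benchmark, your proposal has a genuine gap: the entire content of the theorem lives in the even indices $j\notin\{0\}$, and for those you produce only the identity
\[
\lambda_j = \frac{2\sin\bigl((2t-1)\phi\bigr)}{\sin\phi} - 2 + 2(-1)^{j/2}\bigl(\cos\phi+\cos 3\phi\bigr),\qquad \phi=\tfrac{\pi j}{n},
\]
followed by a promise of ``sharp trigonometric estimates or explicit cyclotomic-polynomial identities.'' Nothing is proved. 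You yourself flag this as ``the technical heart of the argument and the main obstacle,'' which is exactly right, and which is exactly what is missing.

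Moreover, the trigonometric half of your plan is unlikely to close the gap. The term $\sin((2t-1)\phi)/\sin\phi - 1 = \sum_{k=1}^{t-1}2\cos(2k\phi)$ is not ``rigid'': as $t$, $j$ and $n$ vary it sweeps through a wide range of values, including values of magnitude at most $2$ that could a priori cancel the bounded special-generator term, so no uniform inequality separates the two contributions. The obstruction is arithmetic, not metric. The working method --- visible in the proofs of Theorems~\ref{main_th_1} and \ref{main_th_2} of this paper --- is to multiply through by a suitable power of $\zeta$ so that $P(\zeta)=0$ becomes the vanishing of a fixed \emph{lacunary} integer polynomial in $t$ (here one with roughly ten nonzero terms), then show that polynomial is divisible by no $\Phi_b$ with $b\ge 3$: the Filaseta--Schinzel theorem (Theorem~\ref{filaseta}) strips large prime factors from $b$ because the term count is small, unique-remainder lemmas in the style of Lemmas~\ref{unique_remainder_1} and \ref{unique_remainder_2} kill prime powers, and the finitely many surviving $b$ are checked by computer. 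Without carrying out this (or some equivalent) argument for your specific polynomial, the proof is incomplete at its decisive step.
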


In this paper, we fully resolve the circulant nut graph order--degree existence problem by finding $\mathcal{N}_d$ for each $d \in \mathbb{N}_0$. The main result is given in the following theorem.
\begin{theorem}[Circulant nut graph order--degree existence theorem]\label{main_theorem}
    For each \linebreak $d \in \mathbb{N}_0$, the set $\mathcal{N}_d$ can be determined via the following expression:
    \begin{equation}\label{main_theorem_formula}
        \mathcal{N}_d = \begin{cases}
            \varnothing,& d = 0 \lor 4 \nmid d,\\
            \{ n \in \mathbb{N} \colon 2 \mid n \land n \ge d + 4 \},& d \equiv_8 4,\\
            \{14\} \cup \{ n \in \mathbb{N} \colon 2 \mid n \land n \ge 18 \},& d = 8,\\
            \{ n \in \mathbb{N} \colon 2 \mid n \land n \ge d + 6 \},& 8 \mid d \land d \ge 16 .
        \end{cases}
    \end{equation}
\end{theorem}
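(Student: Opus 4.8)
\section*{Proof proposal}

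The plan is to reduce the nut condition for a circulant graph to a purely spectral statement and then dispatch each residue class of $d$ separately, with essentially all of the new work concentrated in a single construction. Recall that the eigenvalues of $\Circ(n,S)$ are $\lambda_k = \sum_{s\in S} 2\cos(2\pi k s/n)$ for $k = \overline{0,n-1}$, with the eigenvector for index $k$ being $(1,\omega^k,\omega^{2k},\ldots)$ where $\omega = e^{2\pi i/n}$. Since $\lambda_k = \lambda_{n-k}$ and $\lambda_0 = d > 0$, the nullity can equal one only if the unique vanishing eigenvalue sits at $k = n/2$, and in that case the corresponding eigenvector is $(1,-1,1,-1,\ldots)$, which has no zero entry. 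Hence $\Circ(n,S)$ is a nut graph if and only if (a) $\lambda_{n/2} = 2\sum_{s\in S}(-1)^s = 0$, i.e.\ $S$ contains equally many odd and even elements, and (b) $\lambda_k \ne 0$ for every $k \ne 0, n/2$. Writing $\varphi_S(x) = \sum_{s\in S}(x^s + x^{n-s})$, so that $\lambda_k = \varphi_S(\omega^k)$, and noting that $\omega^k$ is a primitive $m$-th root of unity with $m = n/\gcd(n,k) > 2$ precisely when $k \ne 0,n/2$, condition (b) is equivalent to $\Phi_m(x) \nmid \varphi_S(x)$ for every divisor $m \mid n$ with $m > 2$, as $\Phi_m$ is the minimal polynomial of a primitive $m$-th root.

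With this reduction in hand, the necessity of $4 \mid d$ and $2 \mid n$, together with the lower bounds $n \ge 4t+4$ (for odd $t$) and $n \ge 4t+6$ (for even $t$), are exactly the facts quoted in Section \ref{intro}, and they account for the first line of \eqref{main_theorem_formula} and for all the lower endpoints. For $d \equiv_8 4$, i.e.\ $d = 4t$ with $t$ odd, every even $n \ge 4t+4$ with $4 \mid n$ is handled by Theorem \ref{damnjanovic_theorem_2} and every $n \equiv_4 2$ (necessarily $\ge 4t+6$) by Theorem \ref{damnjanovic_theorem_3}, so the second line is already settled. The third line, $d = 8$, is precisely \eqref{d_is_8}. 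Thus everything reduces to the fourth line: $8 \mid d$ and $d \ge 16$, that is $d = 4t$ with $t$ even and $t \ge 4$, where the target is that $\mathcal{N}_{4t}$ be exactly the set of even $n \ge 4t+6$.

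In this remaining regime the orders $n \equiv_4 2$ are again covered by Theorem \ref{damnjanovic_theorem_3}, so the sole genuinely new task is to realise every $n \equiv_4 0$ with $n \ge 4t+8$. Here I first observe that the odd-$t$ generator set of Theorem \ref{damnjanovic_theorem_2} can no longer be reused: for even $t$ it produces $t+1$ odd and $t-1$ even generators, violating the parity balance in (a). I would therefore introduce a new generator set of size $2t$, built from two boundary blocks $\{1,\ldots\}$ and $\{\tfrac{n}{2}-\cdots,\ldots,\tfrac{n}{2}-1\}$ together with a small cluster of elements near $n/4$, with the middle cluster (and, if needed, one boundary element) chosen so as to restore an exact split into $t$ odd and $t$ even generators while preserving $4t$-regularity. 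The shape of this set may further depend on the residue of $n$ modulo $8$, which governs the parity of $n/4$, so a couple of parallel families of generator sets are to be expected.

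The crux of the proof, and the step I expect to be by far the hardest, is verifying condition (b) for this new family, uniformly over all even $t \ge 4$ and all $n \equiv_4 0$ with $n \ge 4t+8$. Concretely, for each divisor $m \mid n$ with $m > 2$, I would collapse the interval contributions to $\varphi_S(\zeta_m)$ into closed Dirichlet-kernel-type expressions (geometric sums in a primitive $m$-th root $\zeta_m$), combine them with the contribution of the $n/4$-cluster, and show the total never vanishes. The delicate divisors are those interacting with $n/4$ and with the block lengths, so the argument is expected to branch into several congruence cases for $n$ (and possibly $t$), with a finite list of small orders dispatched by direct computation; one must also confirm that no sporadic exceptions survive for $t \ge 4$, in contrast to the missing order $16$ at $d = 8$. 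Once (b) is secured, combining the new construction with Theorems \ref{damnjanovic_theorem_2} and \ref{damnjanovic_theorem_3}, the identity \eqref{d_is_8}, and the quoted lower bounds yields \eqref{main_theorem_formula} in every case, completing the resolution.
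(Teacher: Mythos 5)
Your reduction is sound and coincides with the paper's: the spectral characterisation you derive is exactly Lemma \ref{damnjanovic_lemma_1}, the disposal of the cases $4 \nmid d$, $d \equiv_8 4$ and $d = 8$ via Theorems \ref{damnjanovic_theorem_2}, \ref{damnjanovic_theorem_3} and Eq.\ (\ref{d_is_8}) is the same, and your observation that the generator set of Theorem \ref{damnjanovic_theorem_2} fails the parity balance for even $t$ (giving $t+1$ odd and $t-1$ even generators) correctly identifies why a new construction is needed for $4 \mid n$. However, the proposal stops precisely where the actual content of the theorem begins: you do not exhibit any concrete generator set for the case $8 \mid d$, $d \ge 16$, $4 \mid n$, $n \ge 4t+8$, and you do not verify condition (b) for it. Saying that one ``would introduce a new generator set \ldots with the middle cluster chosen so as to restore an exact split'' and that one ``would collapse the interval contributions \ldots and show the total never vanishes'' is a research plan, not a proof; the entire difficulty of the problem lives in those two steps, and they occupy Sections \ref{section_3}--\ref{section_5} and four appendices of the paper.

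Moreover, the method you sketch for the non-vanishing step is unlikely to go through as stated. Estimating Dirichlet-kernel-type sums at a primitive $m$-th root of unity, uniformly over all $m \mid n$ with $m > 2$ and over all even $t \ge 4$ and all admissible $n$, does not obviously separate the sum from zero; the delicate divisors you mention are not a finite list, and no magnitude argument is available when the sum can be an algebraic integer of small modulus. The paper instead converts $P(\zeta) = 0$ into the statement that a fixed \emph{lacunary} polynomial ($Q_t$, $R_t$, $U_t$ or $W_t$, each with at most $12$ nonzero terms) is divisible by some cyclotomic polynomial $\Phi_b$ with $b \ge 3$, and then excludes this using the Filaseta--Schinzel theorem (Theorem \ref{filaseta}) to reduce $b$ to a finite, explicitly checkable list, together with ad hoc arguments for $\Phi_p$, $\Phi_{2p}$ and $4 \mid b$. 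It also needs three separate constructions ($n = 4t+8$; $8 \mid n$, $n \ge 4t+16$; $n \equiv_8 4$, $n \ge 4t+12$) rather than one or two families parametrised by $n \bmod 8$. Without supplying the explicit sets and an argument of comparable strength for cyclotomic non-divisibility, the fourth line of Eq.\ (\ref{main_theorem_formula}) remains unproved.
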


The result given in the case $d = 0 \lor 4 \nmid d$ of Eq.\ (\ref{main_theorem_formula}) is straightforward to see, while the expression corresponding to the case $d = 8$ follows directly from Eq.~(\ref{d_is_8}). Given the fact that the case $d \equiv_8 4$ represents an immediate corollary of Theorems~\ref{damnjanovic_theorem_2} and \ref{damnjanovic_theorem_3}, as we have already mentioned, the only remaining case left to be proved is when $8 \mid d \land d \ge 16$. However, Theorem \ref{damnjanovic_theorem_3} tells us that for each such $d$, there does exist a circulant nut graph of every order $n$ such that $n \equiv_4 2$ and $n \ge d + 6$. Thus, taking everything into consideration, in order to complete the proof of Theorem \ref{main_theorem}, it only remains to be shown that for each even $t \ge 4$, there must exist a $4t$-regular circulant nut graph of each order $n$ such that $4 \mid n$ and $n \ge 4t + 8$. This is precisely the task that the remainder of the paper will solve.

The structure of the paper shall be organized in the following manner. After Section~\ref{intro}, which is the introduction, Section \ref{preliminaries} will serve to preview certain theoretical facts regarding the circulant matrices, circulant nut graphs and cyclotomic polynomials which are required to successfully finalize the proof of Theorem \ref{main_theorem}. Afterwards, we shall use three separate constructions in order to show the existence of all the required circulant nut graphs. In Section \ref{section_3} we will construct a $4t$-regular circulant nut graph of order $4t+8$, for each even $t \ge 4$, thereby showing that such a graph necessarily exists. After that, Section \ref{section_4} will be used to show that, for any even $t \ge 4$, there exists a $4t$-regular circulant nut graph of order $n$ for each $n \ge 4t + 16$ such that $8 \mid n$. Subsequently, Section \ref{section_5} will demonstrate the existence of a $4t$-regular circulant nut graph of order $n$ for each $n \ge 4t + 12$ such that $n \equiv_8 4$, where $t \ge 4$ is an arbitrarily chosen even integer. Finally, Section~\ref{conclusion} shall provide a brief conclusion regarding all the obtained results and give two additional problems to be examined in the future.

\section{Preliminaries}\label{preliminaries}

It is known from elementary linear algebra theory (see, for example, \cite[Section~3.1]{Gray}) that the circulant matrix
\[
A = \begin{bmatrix}
    a_0 & a_1 & a_2 & \cdots & a_{n-1}\\
    a_{n-1} & a_0 & a_1 & \cdots & a_{n-2}\\
    a_{n-2} & a_{n-1} & a_0 & \cdots & a_{n-3}\\
    \vdots & \vdots & \vdots & \ddots & \vdots\\
    a_1 & a_2 & a_3 & \dots & a_0
\end{bmatrix}
\]
must have the eigenvalues
\[
    P(1), P(\omega), P(\omega^2), \ldots, P(\omega^{n-1}),
\]
where $\omega=e^{i \frac{2 \pi}{n}}$ is an $n$-th root of unity, and
\[
    P(x) = a_0 + a_1 x + a_2 x^2 + \cdots + a_{n-1} x^{n-1} .
\]

Starting from the aforementioned result, Damnjanovi\'c and Stevanovi\'c \cite{Damnjanovic_1} have managed to give the necessary and sufficient conditions for a circulant graph to be a nut graph in the form of the following lemma.

\begin{lemma}[Damnjanovi\'c and Stevanovi\'c \cite{Damnjanovic_1}]\label{damnjanovic_lemma_1}
    Let $G = \Circ(n, S)$ where $n \ge 2$. The graph $G$ is a nut graph if and only if all of the following conditions hold:
    \begin{itemize}
        \item $2 \mid n$;
        \item $S$ consists of $t$ odd and $t$ even integers from $\left\{1, 2, 3, \ldots, \frac{n}{2} - 1 \right\}$, for some $t \ge 1$;
        \item $P(\omega^j)\neq 0$ for each $j\in\left\{1, 2, 3, \ldots, \frac{n}{2}-1 \right\}$.
    \end{itemize}
\end{lemma}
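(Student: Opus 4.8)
The plan is to recast the two defining properties of a nut graph---nullity one and a full-support kernel vector---entirely in terms of the circulant spectrum $P(\omega^0), P(\omega^1), \ldots, P(\omega^{n-1})$, and then to read off the three listed conditions. The starting point is the quoted diagonalisation: each $v_j = (1, \omega^j, \omega^{2j}, \ldots, \omega^{(n-1)j})^\top$ is an eigenvector of $A$ for the eigenvalue $P(\omega^j)$, and the $v_j$ form a basis of $\mathbb{C}^n$. Hence $A$ is diagonalised by the Fourier basis and its nullity is exactly the number of indices $j$ with $P(\omega^j) = 0$. First I would record two consequences of the relation $a_k = a_{n-k}$: grouping the terms $a_k \omega^{jk}$ with $a_{n-k}\omega^{j(n-k)}$ shows that every $P(\omega^j)$ is real and that $P(\omega^j) = P(\omega^{n-j})$ for all $j$. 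Thus a vanishing eigenvalue at an index $j$ with $j \neq n - j$ is always accompanied by a second one at $n - j$, whose eigenvector $v_{n-j}$ is independent of $v_j$.

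With this symmetry in hand I would locate the single zero. Since $P(1) = \sum_k a_k$ is the degree of $G$, which is nonzero because a nut graph cannot have the zero adjacency matrix, the index $j = 0$ never contributes to the nullity. If $n$ were odd, every other index would belong to a genuine pair $\{j, n-j\}$, so the nullity would be even and could not equal one; this forces $2 \mid n$. When $n$ is even the indices split as $j = 0$, the self-paired index $j = \tfrac n2$ (with $\omega^{n/2} = -1$), and the pairs $\{j, n-j\}$ for $1 \le j \le \tfrac n2 - 1$. Nullity one then forces $P(\omega^j) \neq 0$ for every $1 \le j \le \tfrac n2 - 1$ (otherwise a whole pair vanishes and the nullity is at least two) and forces the unique zero to sit at the remaining self-paired index $j = \tfrac n2$, i.e.\ $P(-1) = 0$. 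Conversely, these same spectral statements---together with $P(1) > 0$---leave $P(-1) = 0$ as the only vanishing eigenvalue, so they are equivalent to nullity one; this is exactly the content of the first and third bullet points.

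It remains to convert $P(-1) = 0$ into the counting condition on $S$ and to verify the full-support requirement. Expanding $P(-1) = \sum_k a_k (-1)^k$ through the generator set, each $s \in S$ with $s < \tfrac n2$ contributes $2(-1)^s$ via the pair of terms $x^s, x^{n-s}$ (using that $n$ is even), while an element $s = \tfrac n2$ would contribute the lone term $(-1)^{n/2}$. A parity argument then settles both points: if $\tfrac n2 \in S$ the sum $P(-1)$ is odd and cannot vanish, forcing $\tfrac n2 \notin S$, and once $S \subseteq \{1, \ldots, \tfrac n2 - 1\}$ the identity $P(-1) = 2\bigl(\#\{\text{even } s \in S\} - \#\{\text{odd } s \in S\}\bigr)$ shows $P(-1) = 0$ is equivalent to $S$ having equally many odd and even members, with $t \ge 1$ forced since $S = \varnothing$ gives the zero matrix. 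Finally the full-support condition is automatic: once the kernel is one-dimensional it is spanned by the real vector $v_{n/2} = (1, -1, 1, -1, \ldots, 1, -1)^\top$, all of whose entries are $\pm 1$. The hard part will be presenting the pairing bookkeeping cleanly---justifying the reality of the eigenvalues and the ``exactly one zero lives at $j = \tfrac n2$'' count rigorously; by comparison the parity evaluation of $P(-1)$ and the observation that $v_{n/2}$ has no zero entries are routine.
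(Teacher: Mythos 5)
Your argument is correct and complete: the diagonalisation by the Fourier basis, the pairing $P(\omega^j)=P(\omega^{n-j})$, the parity evaluation of $P(-1)$ (including the exclusion of $\tfrac n2$ from $S$), and the observation that the kernel is spanned by $(1,-1,1,\ldots,-1)$ together give both directions of the equivalence. The paper only quotes this lemma from the cited reference without reproving it, and your route is essentially the same standard spectral argument that the reference uses and that the paper's preliminaries set up.
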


Suppose that we are given an arbitrary circulant graph of even order $n$ whose generator set is non-empty and contains equally many odd and even integers, all of which are positive integers smaller than $\frac{n}{2}$. Taking into consideration Lemma~\ref{damnjanovic_lemma_1}, it becomes apparent that in order to show that such a graph is a nut graph, it is sufficient to prove that it satisfies the third condition given in the lemma. In other words, it is enough to demonstrate that, for this graph, the polynomial $P(x) \in \mathbb{Z}[x]$ has no $n$-th roots of unity among its roots, except potentially $-1$ or $1$.

Furthermore, it is clear that $\zeta^{n-j} = \dfrac{1}{\zeta^j}$ for each $j = \overline{1, n-1}$ and each $n$-th root of unity $\zeta \in \mathbb{C}$. Bearing this in mind, we quickly obtain that
\begin{equation}\label{polynomial_formula}
    P(\zeta) = \left( \zeta^{s_0} + \frac{1}{\zeta^{s_0}} \right) + \left( \zeta^{s_1} + \frac{1}{\zeta^{s_1}} \right) + \cdots + \left( \zeta^{s_{k-1}} + \frac{1}{\zeta^{s_{k-1}}} \right)
\end{equation}
for an arbitrary $n$-th root of unity $\zeta$ and circulant graph $G = \mathrm{Circ}(n, S)$, where $S = \{ s_0, s_1, s_2, \ldots, s_{k-1} \}$, provided all the generator set elements are lower than $\frac{n}{2}$. Sections \ref{section_3}, \ref{section_4} and \ref{section_5} will all heavily rely on Eq.\ (\ref{polynomial_formula}), as well as Lemma \ref{damnjanovic_lemma_1}, whilst proving that the soon-to-be constructed circulant graphs are indeed nut graphs.

Last but not least, it is crucial to point out that the cyclotomic polynomials shall play a key role in demonstrating whether or not certain polynomials of interest contain the given roots of unity among their roots. The cyclotomic polynomial $\Phi_b(x)$ can be defined for each $b \in \mathbb{N}$ via
\[
    \Phi_b(x) = \prod_{\xi} (x - \xi) ,
\]
where $\xi$ ranges over the primitive $b$-th roots of unity. It is known that these polynomials have integer coefficients and that they are all irreducible in $\mathbb{Q}[x]$ (see, for example, \cite{cyclotomic}). Hence, an arbitrary polynomial in $\mathbb{Q}[x]$ has a primitive $b$-th root of unity among its roots if and only if it is divisible by $\Phi_b(x)$.

While inspecting whether certain integer polynomials are divisible by cyclotomic polynomials, we will strongly rely on the following theorem on the divisibility of lacunary polynomials by cyclotomic polynomials.
\begin{theorem}[Filaseta and Schinzel \cite{Filaseta}]\label{filaseta}
Let $P(x) \in \mathbb{Z}[x]$ have $N$ nonzero terms and let $\Phi_b(x) \mid P(x)$.
Suppose that $p_1, p_2, \dots, p_k$ are distinct primes such that
\[
    \sum_{j=1}^k (p_j-2) > N-2 .
\]
Let $e_j$ be the largest exponent such that $p_j^{e_j} \mid b$. Then for at least one $j$, $1 \le j \le k$, we have that $\Phi_{b'}(x)\mid P(x)$, where $b' = \dfrac{b}{p_j^{e_j}}$.
\end{theorem}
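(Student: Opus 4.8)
The plan is to reduce the theorem to a clean \emph{single-prime descent lemma} and then to aggregate the per-prime information into the additive bound $\sum_j(p_j-2)$. The core lemma is the case $k=1$: if $\Phi_b(x)\mid P(x)$, $p$ is a prime with $p^e\,\|\,b$, and $P$ has strictly fewer than $p$ nonzero terms, then $\Phi_{b/p^e}(x)\mid P(x)$. (For $k=1$ the hypothesis $p-2>N-2$ is exactly $N<p$.)

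Here is how I would prove the lemma. Write $b=p^e m$ with $\gcd(p,m)=1$, fix a primitive $b$-th root $\zeta$, and set $\eta=\zeta^m$ (a primitive $p^e$-th root), $\theta=\zeta^{p^e}$ (a primitive $m$-th root), and $\omega=\eta^{p^{e-1}}$ (a primitive $p$-th root). Since $\gcd(p^e,m)=1$, the polynomial $\Phi_{p^e}$ remains irreducible over $\mathbb{Q}(\theta)=\mathbb{Q}(\zeta_m)$, so $\{\eta^r\omega^q:0\le r<p^{e-1},\ 0\le q\le p-2\}$ (equivalently $\{1,\eta,\dots,\eta^{\varphi(p^e)-1}\}$) is a $\mathbb{Q}(\theta)$-basis of $\mathbb{Q}(\zeta_b)$. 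Writing $P(x)=\sum_i c_i x^{a_i}$ and splitting the $p$-part of each exponent as $\alpha_i=r_i+p^{e-1}q_i$, the relation $P(\zeta)=0$ becomes $\sum_r\sum_{q=0}^{p-1}\eta^r\omega^q B_{r,q}=0$, where $B_{r,q}\in\mathbb{Z}[\theta]$ gathers the coefficients $c_i\theta^{\beta_i}$ whose exponent has data $(r,q)$. Using $\omega^{p-1}=-\sum_{q=0}^{p-2}\omega^q$ together with $\mathbb{Q}(\theta)$-linear independence, one obtains $B_{r,0}=B_{r,1}=\cdots=B_{r,p-1}$ for every $r$. Since $P$ has fewer than $p$ terms, for each $r$ the terms with that $r$ occupy at most $N<p$ of the classes $q=0,\dots,p-1$, so some class is empty; that $B_{r,q}$ is then $0$, and as all are equal, every $B_{r,q}=0$. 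Summing the now vanishing $B_{r,q}$ over all $(r,q)$ recovers the value of $P$ at a suitable primitive $m$-th root of unity $\theta_1=\zeta^{p^e u}$, so $P(\theta_1)=0$ and hence $\Phi_{b/p^e}\mid P$.

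With the lemma in hand, the content of the general theorem is that the obstructions for distinct primes add up. I would first record the \emph{quantitative} shape of the lemma's failure: if descent via $p=p_j$ fails, then not all $B_{r,q}$ vanish, so there is a residue $r_0$ at which all $p_j$ classes are occupied (forcing at least $p_j$ terms), and moreover each difference polynomial $B_{r_0,q}(x)-B_{r_0,q'}(x)$, viewed with exponents reduced modulo $m_j=b/p_j^{e_j}$, is divisible by $\Phi_{m_j}$ and is supported on a modulus whose only relevant prime factors are $p_1,\dots,p_{j-1},p_{j+1},\dots,p_k$. This is precisely the structure that invites an induction on $k$ (peel off $p_k$, pass to a sub-sum living on $m_k$, and invoke the $(k-1)$-prime case), and it meshes with the classical R\'edei--de Bruijn--Schoenberg description of vanishing sums of roots of unity as $\mathbb{Z}$-combinations of shifted full $p$-cycles $\sum_{q=0}^{p-1}x^{a+qb/p}$. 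Under that description one shows that descent via $p_j$ fails exactly when the $p_j$-cycle component of $P$ is genuinely present, after which $N$ is bounded below by counting the distinct monomials these cycle components must contribute.

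The delicate step -- the one I expect to absorb most of the work -- is the additive bookkeeping that upgrades the individual bounds ``failure at $p_j$ forces at least $p_j$ terms'' into the sharp simultaneous estimate $N\ge\sum_{j=1}^k(p_j-2)+2$. Naive peeling does not transfer the hypothesis ``descent fails'' from $P$ to the extracted sub-sum for free, and the cycle components for different primes can overlap, so one must show these overlaps cost at most a controlled amount, accounting exactly for the $-2$ per prime together with the single $+2$ offset. I would resolve this either by a minimal-counterexample argument (take $P$ with $N$ minimal and exhibit the extracted sub-sum as a strictly smaller instance to which induction applies) or by a direct support count in the cycle representation, ensuring that cancellations among overlapping cycles never destroy more than the permitted number of monomials; pinning down this count with the optimal constant is the crux of the proof.
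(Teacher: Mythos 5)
First, note that the paper offers no proof of Theorem~\ref{filaseta}: it is imported verbatim from Filaseta and Schinzel~\cite{Filaseta} and used as a black box, so your attempt must be judged on its own merits rather than against a proof in the paper. Your single-prime descent lemma (the case $k=1$) is correct and cleanly argued: with $b=p^em$, $\gcd(p,m)=1$, the set $\{\eta^r\omega^q: 0\le r<p^{e-1},\ 0\le q\le p-2\}$ is indeed a $\mathbb{Q}(\zeta_m)$-basis of $\mathbb{Q}(\zeta_b)$, the relation $P(\zeta)=0$ forces $B_{r,0}=\cdots=B_{r,p-1}$ for every $r$, the pigeonhole step ($N<p$ leaves an empty class) kills every $B_{r,q}$, and summing recovers $P(\theta^t)=0$ with $\theta^t$ a primitive $m$-th root of unity ($t$ being the inverse of $p^e$ modulo $m$), whence $\Phi_{b/p^e}\mid P$.

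The genuine gap is that this is where your proof effectively stops, while the statement to be proved is the multi-prime assertion with the additive threshold $\sum_{j=1}^k(p_j-2)>N-2$. What your lemma gives upon contraposition is only ``descent fails at $p_j$ implies $N\ge p_j$'', i.e.\ $N\ge\max_j p_j$; for $k\ge 2$ this is strictly weaker than the required $N\ge 2+\sum_{j=1}^k(p_j-2)$. You correctly identify this aggregation as the crux, but you do not carry it out: you name two candidate strategies (a minimal-counterexample induction, or a support count in the R\'edei--de Bruijn--Schoenberg cycle representation) and explicitly concede that the key difficulties --- transferring the ``descent fails'' hypothesis to the extracted subsum, and bounding the loss from overlapping prime cycles so as to obtain exactly $-2$ per prime --- remain unresolved. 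That additive bookkeeping is not a routine afterthought; it is the substance of the Filaseta--Schinzel theorem (it is of the same nature as the Conway--Jones lower bound for minimal vanishing sums of roots of unity), and it is exactly the form in which the paper uses the result: Lemmas~\ref{qt_cyclotomic}, \ref{rt_cyclotomic} and \ref{uwt_cyclotomic} invoke it with the prime sets $\{3,5,7\}$, $\{7,11\}$ and $\{5,11\}$, where only the additive bound, never the single-prime one, yields the conclusion. So your proposal amounts to a correct proof of the easy case $k=1$ together with an honest but incomplete plan for the theorem itself.
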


\section{Construction for \texorpdfstring{$n = 4t + 8$}{n = 4t + 8}}\label{section_3}

In this section, we will demonstrate that for each even $t \ge 4$ there does exist a $4t$-regular circulant nut graph of order $4t + 8$. In order to achieve this, we shall provide a concrete example of such a graph, for each even $t \ge 4$, and then prove that the given graph is indeed a circulant nut graph. While constructing these graphs, we will rely on two different construction patterns. One pattern will be used for the scenario when $4 \mid t$, while the second will give us our desired result provided $t \equiv_4 2$. In the rest of the section we present the two according lemmas.

\newpage
\begin{lemma}\label{4t+8}
    For each $t \ge 4$ such that $4 \mid t$, the circulant graph 
    \[
        \Circ(4t+8, \{1, 2, 3, \ldots, 2t + 3\} \setminus \{ t+1, t+3, t+4 \})
    \]
    must be a $4t$-regular circulant nut graph of order $4t+8$.
\end{lemma}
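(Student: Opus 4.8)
The plan is to verify the three conditions of Lemma~\ref{damnjanovic_lemma_1} for the graph $G = \Circ(4t+8, S)$, where $S = \{1, 2, 3, \ldots, 2t+3\} \setminus \{t+1, t+3, t+4\}$ and $4 \mid t$. Here $n = 4t+8$ is even, so the first condition holds immediately. For the second condition, I would count the odd and even elements of $S$. The full set $\{1, \ldots, 2t+3\}$ contains $t+2$ odd and $t+1$ even integers; removing $t+1$, $t+3$, $t+4$ (where $t+1, t+3$ are odd and $t+4$ is even since $4\mid t$) leaves $t$ odd and $t$ even elements, which matches the requirement with the correct degree $|S| = 2t$, hence $4t$-regular. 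I would also confirm every element is at most $\frac{n}{2}-1 = 2t+3$, which is clear by construction. The routine bookkeeping here is the easy part.

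The substantive work is the third condition: showing $P(\omega^j) \neq 0$ for each $j \in \{1, 2, \ldots, \frac{n}{2}-1\}$, where $\omega = e^{i2\pi/n}$. Using Eq.~(\ref{polynomial_formula}), I would write $P(\zeta) = \sum_{s \in S}(\zeta^s + \zeta^{-s})$ for an $n$-th root of unity $\zeta$, and seek a closed form by expressing the summand set as a near-complete interval minus three terms. Since $\{1, \ldots, 2t+3\}$ is a contiguous run, the sum $\sum_{s=1}^{2t+3}(\zeta^s + \zeta^{-s})$ telescopes into a ratio of the form $\frac{\zeta^{2t+3} - \zeta^{-(2t+3)}}{\dots}$ type geometric expression (a Dirichlet-kernel-like quantity), and then I would subtract the three excluded contributions $(\zeta^{t+1}+\zeta^{-(t+1)}) + (\zeta^{t+3}+\zeta^{-(t+3)}) + (\zeta^{t+4}+\zeta^{-(t+4)})$. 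The aim is to rewrite $P(\zeta)$, after clearing denominators, as a fixed integer Laurent polynomial whose vanishing at roots of unity can be analyzed independently of $t$ modulo the relation $\zeta^n = 1$.

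The key idea for making this $t$-independent is to recast the problem in terms of cyclotomic divisibility. The polynomial $P(x) \in \mathbb{Z}[x]$ has a primitive $b$-th root of unity as a root if and only if $\Phi_b(x) \mid P(x)$, and a zero of $P$ among the relevant $n$-th roots of unity forces $\Phi_b \mid P$ for some $b \mid n$ with $b \nmid 2$ (we may ignore $\zeta = \pm 1$, i.e.\ $b \in \{1,2\}$, as those correspond to the permitted exceptions). I expect $P(x)$, after clearing denominators, to have a bounded number $N$ of nonzero terms, independent of $t$. This is exactly the setting of Theorem~\ref{filaseta}: I would pick small distinct primes $p_1, \ldots, p_k$ with $\sum_j (p_j - 2) > N - 2$ and use the theorem to reduce any hypothetical divisor $\Phi_b(x)$ to one of the form $\Phi_{b'}(x)$ with $b'$ having far fewer prime factors, iterating until $b'$ is forced into a small finite set of candidates. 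For each surviving candidate $b'$, I would directly check that $\Phi_{b'}(x) \nmid P(x)$ by evaluating $P$ at a primitive $b'$-th root of unity, again reducing via $\zeta^{b'} = 1$.

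The main obstacle, I anticipate, is establishing the uniform structural claim that the cleared-denominator form of $P(x)$ has a fixed, small number of nonzero terms valid for all $4\mid t$, so that Theorem~\ref{filaseta} applies with a single choice of primes independent of $t$; the telescoping must be carried out carefully to avoid spurious $t$-dependence in the exponents, and the boundary behavior near $\zeta = \pm 1$ (where the geometric denominator vanishes) needs separate direct evaluation to confirm $P(\pm 1) \neq 0$. Handling the finitely many small divisors $b'$ that the Filaseta--Schinzel reduction cannot eliminate — these likely being small powers of $2$ and $3$ dividing $n$ — will require explicit case checking, and verifying non-vanishing there is where the concrete arithmetic with $t \bmod$ (small modulus) will concentrate.
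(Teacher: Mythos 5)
Your bookkeeping for the first two conditions of Lemma~\ref{damnjanovic_lemma_1} is fine, and your reduction of the problem to showing that a cleared-denominator integer polynomial with a bounded number of terms has no relevant roots of unity is the right general shape. However, your proposed route through Theorem~\ref{filaseta} misses the decisive feature of this particular lemma: here the order is \emph{tied to the degree}, $n = 4t+8$, so for every $n$-th root of unity $\zeta$ we have $(\zeta^{t+2})^4 = \zeta^n = 1$, i.e.\ $\zeta^{t+2}$ is a fourth root of unity. Splitting on $\zeta^{n/2} = \pm 1$ and then $\zeta^{n/4} = \pm 1$ therefore eliminates \emph{all} $t$-dependence from the exponents: after summing the contiguous block via $\sum_{j=1}^{2t+3}(\zeta^j+\zeta^{-j}) = -1-\zeta^{2t+4}$, the condition $P(\zeta)=0$ collapses in each case to a fixed polynomial of degree at most $4$ (e.g.\ $x^4+2x^3-2x^2+2x+1$ in one subcase, $\zeta^4=1$ or $\zeta^2=-1$ in the others), each of which is disposed of by a finite cyclotomic check. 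That is the paper's entire proof, and no lacunary-polynomial machinery is needed.

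The gap in your plan, if one insists on the Filaseta--Schinzel route, is that the theorem alone does not reduce a hypothetical divisor $\Phi_b(x)$ to ``a small finite set of candidates.'' It only strips one full prime power $p_j^{e_j}$ from $b$ at a time; to terminate you must separately bound the exponents of the small primes dividing $b$ (via $\Phi_b(x) = \Phi_{b/p}(x^p)$ together with structural lemmas about which exponents of $P$ can collide modulo a given $\beta$, as in Lemmas~\ref{unique_remainder_1}--\ref{weird_lemma_2}), separately exclude $\Phi_p$ and $\Phi_{2p}$ for a single large prime $p$ by degree comparison (as in Lemma~\ref{p_2p_1}), and then verify the surviving candidates $b'$ not once but for \emph{every residue class of $t$ modulo $b'$}, since the exponents of your cleared polynomial still depend on $t$ (as in Lemma~\ref{concrete_b_1} and the appendices). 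None of this is sketched in your proposal, and your guess that the surviving moduli are ``small powers of $2$ and $3$'' underestimates what that analysis produces elsewhere in the paper (candidates up to $3150$ in Section~\ref{section_4}). Also, the boundary behaviour at $\zeta = \pm 1$ that you flag as an obstacle is a non-issue: those roots are explicitly permitted by Lemma~\ref{damnjanovic_lemma_1} and are excluded from the outset.
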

\begin{proof}
    Let $n = 4t + 8$. First of all, we know that $t+1$ and $t+3$ are odd, while $t+4$ is even, which directly tells us that the given circulant graph does have a non-empty generator set that contains equally many odd and even integers, all of which are positive, but smaller than $\frac{n}{2}$. Thus, by virtue of Lemma \ref{damnjanovic_lemma_1}, in order to prove the given lemma, it is sufficient to show that the polynomial $P(x)$ has no $n$-th roots of unity among its roots, except potentially $1$ or $-1$.

    Let $\zeta \in \mathbb{C}$ be an arbitrary $n$-th root of unity that is different from both $1$ and $-1$. By implementing Eq.\ (\ref{polynomial_formula}), we swiftly obtain
    \[
        P(\zeta) = \sum_{j = 1}^{2t+3}\left( \zeta^j + \zeta^{-j} \right) - \left( \zeta^{t+1} + \zeta^{-t-1} \right) - \left( \zeta^{t+3} + \zeta^{-t-3} \right) - \left( \zeta^{t+4} + \zeta^{-t-4} \right) .
    \]
    However, since $\zeta \neq 1$, we know that
    \begin{alignat*}{2}
        && \sum_{j = 0}^{n-1} \zeta^j &= 0\\
        \implies \quad && \zeta^{-2t-3} \sum_{j = 0}^{4t+7} \zeta^j &= 0\\
        \implies \quad && \sum_{j = -2t-3}^{2t+4} \zeta^j &= 0\\
        \implies \quad && \zeta^{2t+4} + 1 + \sum_{j = 1}^{2t+3} \left( \zeta^j + \zeta^{-j}\right) &= 0\\
        \implies \quad && \sum_{j = 1}^{2t+3} \left( \zeta^j + \zeta^{-j}\right) &= -1 - \zeta^{2t+4} .
    \end{alignat*}
    Thus, the condition $P(\zeta) = 0$ quickly becomes equivalent to
    \begin{alignat}{2}
        \nonumber && P(\zeta) &= 0\\
        \nonumber\iff \quad && -1 - \zeta^{2t+4} - \zeta^{t+1} - \zeta^{-t-1} - \zeta^{t+3} - \zeta^{-t-3} - \zeta^{t+4} - \zeta^{-t-4} &= 0\\
        \nonumber\iff \quad && -\zeta^{t+4}(-1 - \zeta^{2t+4} - \zeta^{t+1} - \zeta^{-t-1} - \zeta^{t+3} - \zeta^{-t-3} - \zeta^{t+4} - \zeta^{-t-4}) &= 0\\
        \label{aux_1}\iff \quad && \zeta^{3t+8} + \zeta^{2t+8} + \zeta^{2t+7} + \zeta^{2t+5} + \zeta^{t+4} + \zeta^3 + \zeta + 1 &= 0 .
    \end{alignat}
    We will finish the proof of the lemma by dividing the problem into two cases depending on the value of $\zeta^{\frac{n}{2}}$.

    \bigskip\noindent
    \emph{Case $\zeta^{\frac{n}{2}} = -1$}.\quad
    In this case, we have $\zeta^{2t+4} = -1$, hence $\zeta^{3t+8} = -\zeta^{t+4}$, which means that Eq.\ (\ref{aux_1}) leads us to
    \begin{alignat*}{2}
        && P(\zeta) &= 0\\
        \iff \quad &&\zeta^{2t+8} + \zeta^{2t+7} + \zeta^{2t+5} + \zeta^3 + \zeta + 1 &= 0\\
        \iff \quad && -\zeta^4 - \zeta^3 - \zeta + \zeta^3 + \zeta + 1 &= 0\\
        \iff \quad && 1 - \zeta^4 &= 0\\
        \iff \quad && \zeta^4 &= 1 .
    \end{alignat*}
    However, $\zeta^4 = 1$ cannot possibly hold. Moreover, $\zeta \neq 1, -1$ by definition, while $i$ and $-i$ do not satisfy the conditions $i^{\frac{n}{2}} = -1$ and $(-i)^{\frac{n}{2}} = -1$ due to the fact that $4 \mid 2t + 4$. Thus, $P(\zeta) = 0$ does not hold for any $n$-th root of unity $\zeta$ that is different from both $1$ and $-1$ and such that $\zeta^{\frac{n}{2}} = -1$.

    \bigskip\noindent
    \emph{Case $\zeta^{\frac{n}{2}} = 1$}.\quad
    In this scenario, we immediately see that $\zeta^{3t+8} = \zeta^{t+4}$, which further helps us obtain from Eq.\ (\ref{aux_1})
    \begin{alignat}{2}
        \nonumber && P(\zeta) &= 0\\
        \nonumber\iff \quad &&\zeta^{2t+8} + \zeta^{2t+7} + \zeta^{2t+5} + 2\zeta^{t+4} + \zeta^3 + \zeta + 1 &= 0\\
        \nonumber\iff \quad && \zeta^4 + \zeta^3 + \zeta + 2\zeta^{t+4} + \zeta^3 + \zeta + 1 &= 0\\
        \label{aux_2} \iff \quad && 2\zeta^{t+4} + \zeta^4 + 2\zeta^3 + 2\zeta + 1&= 0 .
    \end{alignat}
    We now divide the problem into two subcases depending on the value of $\zeta^{\frac{n}{4}}$.

    \medskip\noindent
    \emph{Subcase $\zeta^{\frac{n}{4}} = -1$}.\quad
    Here, it is clear that $\zeta^{t+4} = -\zeta^2$, which means that Eq.\ (\ref{aux_2}) directly transforms to
    \[
        P(\zeta) = 0 \quad \iff \quad \zeta^4 + 2\zeta^3 - 2\zeta^2 + 2\zeta + 1 = 0 .
    \]
    However, the polynomial $x^4 + 2x^3 - 2x^2 + 2x + 1 \in \mathbb{Q}[x]$ has no roots of unity among its roots, as demonstrated in Appendix \ref{problematic_roots}. This means that $P(\zeta) = 0$ cannot possibly hold for any $\zeta$ that is an $n$-th root of unity, as desired.

    \medskip\noindent
    \emph{Subcase $\zeta^{\frac{n}{4}} = 1$}.\quad
    In this subcase, we obtain $\zeta^{t+4} = \zeta^2$. Thus, Eq.\ (\ref{aux_2}) gives us
    \begin{alignat*}{2}
        && P(\zeta) &= 0\\
        \iff \quad && \zeta^4 + 2\zeta^3 + 2\zeta^2 + 2\zeta + 1 &= 0\\
        \iff \quad && (\zeta^2 + 1)(\zeta+1)^2 &= 0\\
        \iff \quad && \zeta^2 + 1 &= 0 .
    \end{alignat*}
    Now, by taking into consideration that $i^{\frac{n}{4}} = (-i)^{\frac{n}{4}} = -1$ due to the fact that $\frac{n}{4} = t + 2 \equiv_4 2$, we clearly see that for any $n$-th root of unity $\zeta \in \mathbb{C}$ different from $1$ and $-1$ and such that $\zeta^\frac{n}{4} = 1$, the equality $\zeta^2 + 1 = 0$ truly cannot hold. Hence, we reach $P(\zeta) \neq 0$ once again.
\end{proof}

\begin{lemma}
    For each $t \ge 6$ such that $t \equiv_4 2$, the circulant graph 
    \[
        \Circ(4t+8, \{1, 2, 3 \ldots, 2t + 3\} \setminus \{ t-2, t+1, t+3 \})
    \]
    must be a $4t$-regular circulant nut graph of order $4t+8$.
\end{lemma}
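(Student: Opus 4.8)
The plan is to follow the template of Lemma~\ref{4t+8} almost verbatim, adapting the bookkeeping to the new generator set. Write $n = 4t+8$. Since $t \equiv_4 2$, the integer $t-2$ is even while $t+1$ and $t+3$ are odd, so deleting $\{t-2,t+1,t+3\}$ from $\{1,2,\ldots,2t+3\}$ removes one even and two odd elements; as $\{1,\ldots,2t+3\}$ contains $t+2$ odd and $t+1$ even integers, the generator set retains exactly $t$ odd and $t$ even integers, all positive and strictly below $\frac{n}{2}=2t+4$ (the hypothesis $t \ge 6$ ensures $t-2 \ge 4 > 0$). The generator-set conditions of Lemma~\ref{damnjanovic_lemma_1} are thus met, and it suffices to verify $P(\zeta) \neq 0$ for every $n$-th root of unity $\zeta \notin \{1,-1\}$.

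Next I would evaluate $P(\zeta)$ via Eq.~(\ref{polynomial_formula}) and reuse the identity $\sum_{j=1}^{2t+3}(\zeta^j + \zeta^{-j}) = -1 - \zeta^{2t+4}$ established in the proof of Lemma~\ref{4t+8}, valid for $\zeta \neq 1$. Substituting and multiplying by $-\zeta^{t+3}$ to clear denominators turns $P(\zeta)=0$ into the polynomial identity
\[
    \zeta^{3t+7} + \zeta^{2t+6} + \zeta^{2t+4} + \zeta^{2t+1} + \zeta^{t+3} + \zeta^5 + \zeta^2 + 1 = 0 .
\]
As in Lemma~\ref{4t+8}, I would then branch on the value of $\zeta^{\frac{n}{2}} = \zeta^{2t+4} \in \{1,-1\}$.

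In the case $\zeta^{2t+4} = -1$, reducing every exponent modulo $2t+4$ with the attendant sign change collapses the left-hand side to $\zeta^5 - \zeta^{-3}$, so $P(\zeta)=0$ would force $\zeta^8 = 1$; but $t \equiv_4 2$ gives $8 \mid 2t+4$, whence any $\zeta$ with $\zeta^8=1$ satisfies $\zeta^{2t+4} = 1 \neq -1$, a contradiction. In the case $\zeta^{2t+4}=1$, the same reduction leaves a single $t$-dependent term $2\zeta^{t+3}$, and I would split further on $\zeta^{\frac{n}{4}} = \zeta^{t+2} \in \{1,-1\}$ (legitimate since $(\zeta^{t+2})^2=1$), substituting $\zeta^{t+3} = \pm\zeta$ and multiplying by $\zeta^3$ to clear denominators. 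The subcase $\zeta^{t+2}=1$ yields $\zeta^8 + 2\zeta^5 + 2\zeta^4 + 2\zeta^3 + 1 = 0$, which factors as $(\zeta+1)^2(\zeta^6 - 2\zeta^5 + 3\zeta^4 - 2\zeta^3 + 3\zeta^2 - 2\zeta + 1) = 0$; the subcase $\zeta^{t+2}=-1$ yields $\zeta^8 + 2\zeta^5 - 2\zeta^4 + 2\zeta^3 + 1 = 0$, which factors as $(\zeta^2+1)(\zeta^6 - \zeta^4 + 2\zeta^3 - \zeta^2 + 1) = 0$. The vanishing factors $(\zeta+1)^2$ and $(\zeta^2+1)$ are harmless: $\zeta=-1$ is excluded by hypothesis, while $\zeta = \pm i$ cannot occur in the subcase $\zeta^{t+2}=-1$ because $4 \mid t+2$ forces $(\pm i)^{t+2} = 1$.

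The main obstacle, exactly as in Lemma~\ref{4t+8}, is to show that the two residual degree-six factors $\zeta^6 - 2\zeta^5 + 3\zeta^4 - 2\zeta^3 + 3\zeta^2 - 2\zeta + 1$ and $\zeta^6 - \zeta^4 + 2\zeta^3 - \zeta^2 + 1$ admit no roots of unity among their roots. I would dispatch this by the same mechanism the paper applies to the polynomial treated in Appendix~\ref{problematic_roots}: a hypothetical primitive $m$-th root of unity among the roots would force the cyclotomic polynomial $\Phi_m$ to divide the corresponding sextic, and Theorem~\ref{filaseta} bounds the admissible indices $m$ so that only finitely many cyclotomic polynomials need be tested and excluded. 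Equivalently, the substitution $u = \zeta + \zeta^{-1}$ reduces each palindromic sextic to the cubic $u^3 - 2u^2 + 2$, respectively $u^3 - 4u + 2$, neither of which is the minimal polynomial of any $2\cos\frac{2\pi}{m}$, so no root of unity can satisfy the sextic. This settles the remaining subcases and gives $P(\zeta) \neq 0$ throughout, completing the proof.
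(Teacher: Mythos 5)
Your proposal is correct and follows essentially the same route as the paper: the same generator-set count, the same reduction via Eq.~(\ref{polynomial_formula}) to $\zeta^{3t+7} + \zeta^{2t+6} + \zeta^{2t+4} + \zeta^{2t+1} + \zeta^{t+3} + \zeta^5 + \zeta^2 + 1 = 0$, the same case split on $\zeta^{n/2}$ and $\zeta^{n/4}$, and the same two residual sextics. The only (harmless) deviation is at the very end: the paper disposes of the sextics by directly checking non-divisibility by the finitely many cyclotomic polynomials of degree at most six (Appendix~\ref{problematic_roots}), while your substitution $u = \zeta + \zeta^{-1}$, reducing them to the irreducible cubics $u^3 - 2u^2 + 2$ and $u^3 - 4u + 2$ and comparing with the minimal polynomials of $2\cos\frac{2\pi}{m}$, is an equally valid and arguably cleaner hand verification.
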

\begin{proof}
    Let $n = 4t + 8$. It is clear that $t-2$ is even, while $t+1$ and $t+3$ are odd, which implies that the given circulant graph has a non-empty generator set that contains equally many odd and even integers, all of which are positive and lower than $\frac{n}{2}$. By relying on Lemma \ref{damnjanovic_lemma_1}, we know that in order to finalize the proof of the lemma, it is enough to demonstrate that the polynomial $P(x)$ has no $n$-th roots of unity among its roots, except potentially $1$ or $-1$.

    We will use a very similar strategy to complete the proof as it was done in Lemma \ref{4t+8}. Let $\zeta \in \mathbb{C}$ be an arbitrary $n$-th root of unity such that $\zeta \neq 1, -1$. By using Eq.\ (\ref{polynomial_formula}), we immediately get
    \[
        P(\zeta) = \sum_{j = 1}^{2t+3}\left( \zeta^j + \zeta^{-j} \right) - \left( \zeta^{t-2} + \zeta^{-t+2} \right) - \left( \zeta^{t+1} + \zeta^{-t-1} \right) - \left( \zeta^{t+3} + \zeta^{-t-3} \right) .
    \]
    Now, we can use the same equality $\displaystyle\sum_{j = 0}^{2t+3} \left( \zeta^j + \zeta^{-j}\right) = -1 - \zeta^{2t+4}$ that was proved in Lemma \ref{4t+8} in order to conclude that
    \begin{alignat}{2}
        \nonumber && P(\zeta) &= 0\\
        \nonumber\iff \quad && -1 - \zeta^{2t+4} - \zeta^{t-2} - \zeta^{-t+2} - \zeta^{t+1} - \zeta^{-t-1} - \zeta^{t+3} - \zeta^{-t-3} &= 0\\
        \nonumber\iff \quad && -\zeta^{t+3}(-1 - \zeta^{2t+4} - \zeta^{t-2} - \zeta^{-t+2} - \zeta^{t+1} - \zeta^{-t-1} - \zeta^{t+3} - \zeta^{-t-3}) &= 0\\
        \label{aux_3}\iff \quad && \zeta^{3t+7} + \zeta^{2t+6} + \zeta^{2t+4} + \zeta^{2t+1} + \zeta^{t+3} + \zeta^5 + \zeta^2 + 1 &= 0 .
    \end{alignat}
    We shall finish the proof by dividing the problem into two cases depending on the value of $\zeta^{\frac{n}{2}}$.

    \newpage
    \bigskip\noindent
    \emph{Case $\zeta^{\frac{n}{2}} = -1$}.\quad
    Here, we see that $\zeta^{2t+4} = -1$, hence $\zeta^{3t+7} = -\zeta^{t+3}$. On behalf of Eq.\ (\ref{aux_3}), $P(\zeta) = 0$ becomes further equivalent to
    \begin{alignat*}{2}
        && P(\zeta) &= 0\\
        \iff \quad && \zeta^{2t+6} + \zeta^{2t+4} + \zeta^{2t+1} + \zeta^5 + \zeta^2 + 1 &= 0\\
        \iff \quad && -\zeta^2 - 1 - \frac{1}{\zeta^3} + \zeta^5 + \zeta^2 + 1 &= 0\\
        \iff \quad && \zeta^5 - \frac{1}{\zeta^3} &= 0\\
        \iff \quad && \zeta^8 &= 1 .
    \end{alignat*}
    However, $\frac{n}{2} = 2t + 4$, where $t \equiv_4 2$, which means that $8 \mid \frac{n}{2}$. This implies that whenever some eighth root of unity is raised to the power of $\frac{n}{2}$, it yields $1$, not $-1$. Hence, the equality $\zeta^8 = 1$ cannot possibly hold for any $n$-th root of unity $\zeta \in \mathbb{C}$ such that $\zeta^{\frac{n}{2}} = -1$. Thus, we obtain $P(\zeta) \neq 0$, as desired.

    \bigskip\noindent
    \emph{Case $\zeta^{\frac{n}{2}} = 1$}.\quad
    In this case, it is clear that $\zeta^{3t+7} = \zeta^{t+3}$, which allows us to implement Eq.\ (\ref{aux_3}) in order to reach
    \begin{alignat}{2}
        \nonumber && P(\zeta) &= 0\\
        \nonumber\iff \quad &&\zeta^{2t+6} + \zeta^{2t+4} + \zeta^{2t+1} + 2\zeta^{t+3} + \zeta^5 + \zeta^2 + 1 &= 0\\
        \nonumber\iff \quad && \zeta^2 + 1 + \frac{1}{\zeta^3} + 2\zeta^{t+3} + \zeta^5 + \zeta^2 + 1 &= 0\\
        \nonumber\iff \quad && \zeta^3 \left( 2\zeta^{t+3} + \zeta^5 + 2\zeta^2 + 2 + \frac{1}{\zeta^3} \right) &= 0\\
        \label{aux_4} \iff \quad && 2\zeta^{t+6} + \zeta^8 + 2\zeta^5 + 2\zeta^3 + 1 &= 0 .
    \end{alignat}
    We now divide the problem into two subcases depending on the value of $\zeta^{\frac{n}{4}}$.

    \medskip\noindent
    \emph{Subcase $\zeta^{\frac{n}{4}} = -1$}.\quad
    In this subcase, we know that $\zeta^{t+6} = -\zeta^4$, hence Eq.\ (\ref{aux_4}) quickly implies
    \begin{alignat*}{2}
        && P(\zeta) &= 0\\
        \iff \quad && \zeta^8 + 2\zeta^5 - 2 \zeta^4 + 2\zeta^3 + 1 &= 0 \\
        \iff \quad && (\zeta^2 + 1)(\zeta^6 - \zeta^4 + 2\zeta^3 - \zeta^2 + 1) &= 0 .
    \end{alignat*}
    Furthermore, we have $i^\frac{n}{4} = (-i)^\frac{n}{4} = 1$ due to the fact that $\frac{n}{4} = t + 2 \equiv_4 0$, which means that $\zeta^2 + 1 \neq 0$. This leads us to
    \[
        P(\zeta) = 0 \quad \iff \quad \zeta^6 - \zeta^4 + 2\zeta^3 - \zeta^2 + 1 = 0 .
    \]
    However, the polynomial $x^6 - x^4 + 2x^3 - x^2 + 1 \in \mathbb{Q}[x]$ has no roots of unity among its roots, as shown in Appendix \ref{problematic_roots}. This implies that $P(\zeta) \neq 0$ for any $n$-th root of unity $\zeta$ such that $\zeta \neq 1, -1$ and $\zeta^\frac{n}{4} = -1$.

    \medskip\noindent
    \emph{Subcase $\zeta^{\frac{n}{4}} = 1$}.\quad
    Here, we get $\zeta^{t+6} = \zeta^4$, which enables us to use Eq.\ (\ref{aux_4}) to swiftly obtain
    \begin{alignat*}{2}
        && P(\zeta) &= 0\\
        \iff \quad && \zeta^8 + 2\zeta^5 + 2 \zeta^4 + 2\zeta^3 + 1 &= 0 \\
        \iff \quad && (\zeta + 1)^2 (\zeta^6 - 2\zeta^5 + 3 \zeta^4 - 2\zeta^3 + 3\zeta^2 - 2 \zeta + 1) &= 0 \\
        \iff \quad && \zeta^6 - 2\zeta^5 + 3 \zeta^4 - 2\zeta^3 + 3\zeta^2 - 2 \zeta + 1 &= 0 .
    \end{alignat*}
    The polynomial $x^6 - 2x^5 + 3x^4 - 2x^3 + 3x^2 - 2x + 1 \in \mathbb{Q}[x]$ has no roots of unity among its roots, as demonstrated in Appendix \ref{problematic_roots}. This clearly shows that $P(\zeta) = 0$ cannot hold, as desired.
\end{proof}

\section{Construction for \texorpdfstring{$8 \mid n \land n \ge 4t + 16$}{8 | n and n >= 4t + 16}}\label{section_4}

In this section we will give a constructive proof of the existence of a $4t$-regular circulant nut graph of any order $n \in \mathbb{N}$ such that $n \ge 4t + 16$ and $8 \mid n$, for any even $t \ge 4$. In order to achieve this, we will prove the following theorem.

\begin{theorem}\label{main_th_1}
    For any even $t \ge 4$ and any $n \ge 4t + 16$ such that $8 \mid n$, the circulant graph $\mathrm{Circ}(n, S'_{t, n})$ where
    \begin{align*}
        S'_{t, n} = \{1, 2, \ldots, t-3 \} &\cup \{ t-1, t \} \cup \left\{ \frac{n}{4}, \frac{n}{4} + 2 \right\}\\
        &\cup \left\{ \frac{n}{2} - t, \frac{n}{2} - (t-1) \right\} \cup \left\{\frac{n}{2} - (t-3), \ldots, \frac{n}{2} - 2, \frac{n}{2} -1 \right\}
    \end{align*}
    must be a $4t$-regular circulant nut graph of order $n$.
\end{theorem}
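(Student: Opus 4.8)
The plan is to invoke Lemma~\ref{damnjanovic_lemma_1}, so the first step is to verify that $S'_{t,n}$ is non-empty and splits into exactly $t$ odd and $t$ even elements of $\{1,\dots,\tfrac n2-1\}$; a direct parity count (using $2\mid t$ and $8\mid n$) confirms this, reducing the whole problem to showing that $P(\zeta)\neq 0$ for every $n$-th root of unity $\zeta\neq\pm 1$. To evaluate $P(\zeta)$ via Eq.~(\ref{polynomial_formula}), I would exploit the fact that $S'_{t,n}$ is symmetric about $\tfrac n4$: writing $T=\{1,\dots,t\}\setminus\{t-2\}$ for the block of small generators, the large generators are exactly $\{\tfrac n2-s:s\in T\}$, the two middle generators are $\tfrac n4$ and $\tfrac n4+2$, and since $\zeta^{-n/2}=\zeta^{n/2}\in\{1,-1\}$ one gets $\zeta^{n/2-s}+\zeta^{-(n/2-s)}=\zeta^{n/2}(\zeta^s+\zeta^{-s})$. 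Hence everything collapses to
\[
    P(\zeta)=\bigl(1+\zeta^{n/2}\bigr)\sum_{s\in T}\bigl(\zeta^s+\zeta^{-s}\bigr)+\bigl(\zeta^{n/4}+\zeta^{-n/4}\bigr)+\bigl(\zeta^{n/4+2}+\zeta^{-n/4-2}\bigr),
\]
and the proof then splits on the value of $\zeta^{n/2}\in\{1,-1\}$, exactly as in Lemma~\ref{4t+8}.

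The case $\zeta^{n/2}=-1$ should be immediate: the factor $1+\zeta^{n/2}$ vanishes and, writing $\mu=\zeta^{n/4}$ with $\mu^2=-1$ (so $\mu^{-1}=-\mu$), the two middle terms reduce to $\mu(\zeta^2-\zeta^{-2})$. This vanishes only if $\zeta^4=1$, i.e.\ $\zeta=\pm i$; but $8\mid n$ forces $(\pm i)^{n/2}=1\neq-1$, contradicting the case hypothesis, so $P(\zeta)\neq 0$ with no further work. The case $\zeta^{n/2}=1$ is the substantial one. Here $\mu=\zeta^{n/4}\in\{1,-1\}$, the middle terms become $\mu(2+\zeta^2+\zeta^{-2})$, and $P(\zeta)=0$ is equivalent to $2\sum_{s\in T}(\zeta^s+\zeta^{-s})+\mu(2+\zeta^2+\zeta^{-2})=0$. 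The long sum has unbounded length, so the idea is to multiply through by $(\zeta-1)\zeta^{t}$; the telescoping of $(\zeta-1)\sum_{j=1}^t(\zeta^j+\zeta^{-j})$ turns this into an integer polynomial $Q_\mu(x)$ with only a bounded number $N$ of nonzero terms (around $10$ for $\mu=1$ after cancellation at the exponents $t$ and $t+1$, around $12$ for $\mu=-1$), whose exponents are controlled by $t$. Proving $P(\zeta)\neq 0$ is then equivalent to showing that no admissible $\zeta$ is a root of $Q_\mu$, apart from the spurious root $\zeta=1$ introduced by the factor $\zeta-1$.

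For the final step I would argue by contradiction through Theorem~\ref{filaseta}. If some primitive $b$-th root of unity with $b\ge 3$ were a root of $Q_\mu$, then $\Phi_b(x)\mid Q_\mu(x)$, and choosing a fixed set of small primes with $\sum_j(p_j-2)>N-2$ (for instance $\{2,3,5,7,11\}$, which covers both term counts above) lets me repeatedly strip whole prime-power factors from $b$ while preserving divisibility; any prime $p>N$ dividing $b$ can likewise be stripped by taking $k=1$. Iterating drives $b$ down to a bounded value, leaving only finitely many small orders to rule out, which I would settle by reducing $Q_\mu(x)$ modulo $\Phi_b(x)$ while tracking the residue of the $t$-dependent exponents, and by using the case constraints $\zeta^{n/2}=1$ and $\zeta^{n/4}=\mu$ to discard the surviving candidates. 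I expect the main obstacle to lie precisely here: unlike Lemma~\ref{4t+8}, where the reduction collapsed onto a single fixed polynomial that could be checked once in an appendix, the free parameter $t$ yields an infinite family $\{Q_\mu\}$ of growing degree, so the Filaseta--Schinzel reduction must be applied uniformly in $t$, and one must carefully control possible coincidences among the exponents of $Q_\mu$ (which can merge terms and lower $N$) for the small values of $t$ and confirm that $N$ never drops below the threshold required by the prime-sum inequality.
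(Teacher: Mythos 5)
Your reduction is exactly the paper's: the same appeal to Lemma~\ref{damnjanovic_lemma_1}, the same pairing of $j$ with $\tfrac{n}{2}-j$ producing the factor $1+\zeta^{n/2}$, the same immediate dispatch of the case $\zeta^{n/2}=-1$ via $\zeta^4=1$ and $4\mid\tfrac{n}{2}$, and the same passage (multiplying by $(\zeta-1)\zeta^{t}$) to two lacunary integer polynomials with $10$ and $12$ nonzero terms --- these are precisely the paper's $Q_t(x)$ and $R_t(x)$, with $t=4$ peeled off as a finite check exactly as you anticipate.

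The gap is in your endgame. You claim that iterating Theorem~\ref{filaseta} ``drives $b$ down to a bounded value, leaving only finitely many small orders to rule out.'' It does not, for two reasons. First, the theorem is only informative when the chosen primes all divide $b$ (if $p_j\nmid b$ then $e_j=0$ and the conclusion $\Phi_{b'}\mid P$ with $b'=b$ is just the hypothesis), so for $b=2^{a}3^{c}5^{d}7^{e}$ with few distinct prime factors but large exponents the sum $\sum(p_j-2)$ over primes dividing $b$ never exceeds $N-2$ and the theorem cannot be applied at all; bounding those exponents requires the entirely separate identity $\Phi_b(x)=\Phi_{b/p^{k-1}}(x^{p^{k-1}})$ combined with the combinatorial fact that $Q_t$ and $R_t$ are never divisible by a nonmonomial polynomial all of whose exponents are multiples of $\beta$ (the paper's Lemmas~\ref{unique_remainder_1}, \ref{weird_lemma_1}, \ref{weird_lemma_2}, including an explicit elimination computation to exclude $4\mid b$). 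Second, when $b=p$ or $b=2p$ for a single large prime $p$, stripping $p$ lands you on $\Phi_1$ or $\Phi_2$, which genuinely divide $Q_t$ and $R_t$ (both vanish at $\pm1$), so no contradiction results; since this is an infinite family of moduli it cannot be settled by a finite computation, and one needs a uniform degree/term-count argument comparing the exponent-reduced polynomial with $c\,\Phi_p(x)$ (the paper's Lemma~\ref{p_2p_1}, with an edge case at $p=13$ and at $p\mid t$). Only once both of these are in place does the residual finite list of $b$'s materialize (Lemma~\ref{concrete_b_1} and the appendices). Your closing worry about exponent coincidences for small $t$ is legitimate, but it is the uniformity in $b$, not in $t$, that is the real obstruction.
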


For starters, it is clear that the set $S'_{t, n}$ is well defined, given the fact that $t < \frac{n}{4}$ and $\frac{n}{4} + 2 < \frac{n}{2} - t$ for each even $t \ge 4$ and each $n \ge 4t + 16$ such that $8 \mid n$. Moreover, it is not difficult to see that this set necessarily contains equally many odd and even integers, all of which are positive and smaller than $\frac{n}{2}$. By virtue of Lemma \ref{damnjanovic_lemma_1}, in order to prove Theorem \ref{main_th_1}, it is sufficient to show that $P(x)$ has no $n$-th roots of unity among its roots, except potentially $1$ or $-1$.

\newpage
The proof of Theorem \ref{main_th_1} will be carried out in a fashion that is very similar to the strategy used by Damnjanovi\'c \cite{Damnjanovic_2}. Thus, we will rely on a few auxiliary lemmas which will be used in order to finalize the proof in a more concise manner. We start off by defining the following two polynomials
\begin{align*}
    Q_t(x) = 2x^{2t+1} - 2x^{2t-1} + 2x^{2t-2} &+ x^{t+3} - x^{t+2} + x^{t-1} - x^{t-2} - 2x^3 + 2x^2 - 2 ,\\
    R_t(x) = 2x^{2t+1} - 2x^{2t-1} + 2x^{2t-2} &- x^{t+3} + x^{t+2} - 4x^{t+1}\\
    &+ 4x^t - x^{t-1} + x^{t-2} - 2x^3 + 2x^2 - 2 ,
\end{align*}
for each even $t \ge 6$. Now, since it is clear that $3 < t-2$ and $t+3 < 2t-2$ hold for any even $t \ge 6$, we see that $Q_t(x)$ must have exactly $10$ non-zero terms, while $R_t(x)$ surely has exactly $12$ non-zero terms. Let $L'_t$ and $L''_t$ be the sets containing the powers of these terms, respectively, i.e.\
\begin{align*}
    L'_t &= \{0, 2, 3, t-2, t-1, t+2, t+3, 2t-2, 2t-1, 2t+1\},\\
    L''_t &= \{0, 2, 3, t-2, t-1, t, t+1, t+2, t+3, 2t-2, 2t-1, 2t+1\},
\end{align*}
for each even $t \ge 6$. In the next lemma we will show one valuable property regarding these two sets.
\begin{lemma}\label{unique_remainder_1}
    For each even $t \ge 6$ and each $\beta \in \mathbb{N},\, \beta \ge 10$, $L'_t$ must contain an element whose remainder modulo $\beta$ is unique within the set. Also, for each even $t \ge 6$ and each $\beta \in \mathbb{N},\, \beta \ge 7$ such that $ \beta \nmid t$, $L''_t$ necessarily contains an element whose remainder modulo $\beta$ is unique within the set.
\end{lemma}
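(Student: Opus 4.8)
The plan is to prove both assertions by a single strategy: recast the existence of a unique remainder as a statement about the sizes of residue classes, and then derive a contradiction from the assumption that no such element exists by exploiting a rigidity phenomenon forced by $\beta$ being large.

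First I would decompose each set into three blocks according to the clustering of its elements. For $L'_t$ these are $B_1 = \{0,2,3\}$, $B_2 = \{t-2,t-1,t+2,t+3\}$ and $B_3 = \{2t-2,2t-1,2t+1\}$, while for $L''_t$ the middle block grows to the full run $B_2 = \{t-2,t-1,t,t+1,t+2,t+3\}$. Each block has diameter at most $5$, so for $\beta \ge 6$ no two elements of the same block are congruent modulo $\beta$; hence every residue class meets each block at most once and therefore contains at most $3$ elements. Now, the statement ``no element has a remainder that is unique within the set'' is equivalent to ``every residue class has at least $2$ elements''. Combining $2 \le (\text{class size}) \le 3$ with the totals $|L'_t| = 10$ and $|L''_t| = 12$ leaves very few admissible size patterns: for $L'_t$ only $\{2,2,2,2,2\}$ or $\{3,3,2,2\}$, and for $L''_t$ only the perfect pairing $\{2,2,2,2,2,2\}$ — the patterns $\{3,3,3,3\}$ and $\{3,3,2,2,2\}$ are impossible, since they would force four elements out of some three-element block, or a forbidden within-block pair. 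Thus in each case the elements would have to split into cross-block pairs (and, for $L'_t$, possibly two cross-block triples), and it is this forced matching that I would contradict.

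The engine of the contradiction is the elementary fact that for $\beta \ge 9$ any window of at most $8$ consecutive integers contains at most one multiple of $\beta$. Every collision between $B_1$ and $B_2$, or between $B_2$ and $B_3$, corresponds to a gap lying in the length-$8$ window $[t-5,t+3]$, so under the matching assumption all such gaps must equal one and the same multiple $M$ of $\beta$, turning the hypothetical matching into rigid ``shift-by-$M$'' maps $B_1 \to B_2 \to B_3$. For $L'_t$ I would first note that $\{0,2,3\}+M = \{M, M+2, M+3\}$ can never be contained in $B_2$, which instantly kills the $\{3,3,2,2\}$ pattern; and in the pairing pattern the requirement that $\{M, M+2, M+3\}$ meet $B_2$ in exactly two points forces $M \in \{t-4, t-1, t\}$, after which the unique leftover $B_1$--$B_3$ pair is pinned down to $\{0, 2t+1\}$ and the congruence $0 \equiv 2t+1 \pmod{\beta}$ reduces to $\beta \mid 9$ (when $M=t-4$) or $\beta \mid 1$ (when $M=t$), both impossible once $\beta \ge 10$. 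This is exactly where the threshold $10$ rather than $9$ is indispensable: for $\beta = 9$ and $t \equiv_{9} 4$ the set $L'_t$ genuinely splits into five congruent pairs, so no unique remainder exists.

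For $L''_t$ the same rigidity shows that in the forced perfect matching the three elements $0,2,3$ each pair into $B_2$ through a common multiple $M$, so $B_1$ occupies the cells $\{M, M+2, M+3\}\subseteq B_2$ and hence $M \in \{t-2,t-1,t\}$; the hypothesis $\beta \nmid t$ removes $M = t$ (and indeed for $\beta \mid t$ the claim really fails). A symmetric analysis pins the cells of $B_2$ used by $B_3$, with $M' \in \{t-2, t-1\}$, and comparing the two resulting triples of $B_2$-cells in all four remaining combinations always reveals an overlap, contradicting that the matching is a bijection onto $B_2$. The residual small moduli $\beta \in \{7,8\}$, for which the length-$8$ window may contain two multiples of $\beta$ and the rigidity argument breaks down, I would dispatch by a direct finite check over the residue $t \bmod \beta$ (again using $\beta \nmid t$), since every residue of the set depends only on $t \bmod \beta$. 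The main obstacle is precisely this rigidity step together with the bookkeeping in the pairing cases: isolating the exact divisibility conditions $\beta \mid 9$ and $\beta \mid 1$ that fix the constant $10$ for $L'_t$, and clearing the two exceptional moduli for $L''_t$, is where the real work lies.
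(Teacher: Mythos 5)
Your argument is correct in substance but takes a genuinely different route from the paper. The paper's proof is a direct case analysis anchored on the single element $t-1$: either $t-1$ already has a unique remainder modulo $\beta$, or one of six possible collisions pins down $t \bmod \beta$, and a table of residues then exhibits a unique-remainder element in each resulting case. You instead treat the residue classes as a partition, use the block structure ($B_1$, $B_2$, $B_3$, each of diameter at most $5$) to bound every class by $3$, enumerate the admissible class-size patterns, and then invoke the rigidity observation that all $B_1$--$B_2$ and $B_2$--$B_3$ collision gaps lie in the common window $[t-5,\,t+3]$ (note this window has $9$ integers, not $8$, which is exactly why you need $\beta\ge 9$ for the rigidity and why $\beta\in\{7,8\}$ survive as residual cases for $L''_t$). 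What your approach buys is a conceptual explanation of the thresholds: you correctly identify that for $\beta=9$ and $t\equiv_9 4$ the set $L'_t$ splits into five congruent pairs (so the bound $10$ is sharp), and that $\beta\mid t$ is precisely the failure mode for $L''_t$ --- facts the paper's proof only reveals implicitly in its $t\equiv_\beta 0$ column. The price is the separate finite verification for $\beta\in\{7,8\}$ in the $L''_t$ case, which the paper's uniform case analysis avoids.

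One loose end needs tying up. In the pairing pattern for $L'_t$ you pin $M\in\{t-4,\,t-1,\,t\}$ but then assert that the leftover $B_1$--$B_3$ pair is $\{0,\,2t+1\}$; this is only true for $M\in\{t-4,\,t\}$, since for $M=t-1$ the two elements of $B_1$ shifted into $B_2$ are $0$ and $3$, leaving $2$ (not $0$) to pair into $B_3$. That case still collapses: the two required $B_2$--$B_3$ pairs would have to use $t-2$ and $t+3$, yet $t-2+M=2t-3$ and $t+3+M=2t+2$ both miss $B_3$; alternatively, the unique multiple of $\beta$ in the $B_1$--$B_3$ window $[2t-5,\,2t+1]$ is $2M=2t-2$, and $2+(2t-2)=2t\notin B_3$. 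But this subcase must be stated, as it is not covered by the $\beta\mid 9$ and $\beta\mid 1$ contradictions you give. With that addition, and with the finite check over $t\bmod\beta$ for $\beta\in\{7,8\}$ actually carried out, your proof is complete.
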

\begin{proof}
    It is clear that, for any $\beta \ge 7$, the elements $t-2, t-1, t, t+1, t+2, t+3$ must all have mutually distinct remainders modulo $\beta$. If the element $t-1$ were to have a distinct remainder modulo $\beta$ from all the remainders of the elements $0, 2, 3, 2t-2, 2t-1, 2t+1$, then this value would represent an element of the set $L'_t$, as well as the set $L''_t$, which has a unique remainder modulo $\beta$ inside the said set. The lemma statement would swiftly follow from here. Now, suppose otherwise, i.e.\ that the value $t-1$ does have the same remainder modulo $\beta$ as some number from the set $\{0, 2, 3, 2t-2, 2t-1, 2t+1\}$. We will finish the proof off by showing that the lemma statement holds in this scenario as well. For convenience, we will divide the problem into six corresponding cases.

    \begin{table}[h!t]
    {\footnotesize
    \begin{center}
    \begin{tabular}{rccccc}
    \toprule & $t \equiv_\beta -2$ & $t \equiv_\beta 0$ & $t \equiv_\beta 1$ & $t \equiv_\beta 3$ & $t \equiv_\beta 4$ \\
    \midrule
    $0 \equiv_\beta$ & $0$ & $0$ & $0$ & $0$ & $0$\\
    $2 \equiv_\beta$ & $2$ & $2$ & $2$ & $2$ & $2$\\
    $3 \equiv_\beta$ & $3$ & $3$ & $3$ & $3$ & $3$\\
    $t-2 \equiv_\beta$ & $-4$ & $-2$ & $-1$ & $1$ & $2$\\
    $t-1 \equiv_\beta$ & $-3$ & $-1$ & $0$ & $2$ & $3$\\
    $t \equiv_\beta$ & $-2$ & $0$ & $1$ & $3$ & $4$\\
    $t+1 \equiv_\beta$ & $-1$ & $1$ & $2$ & $4$ & $5$\\
    $t+2 \equiv_\beta$ & $0$ & $2$ & $3$ & $5$ & $6$\\
    $t+3 \equiv_\beta$ & $1$ & $3$ & $4$ & $6$ & $7$\\
    $2t-2 \equiv_\beta$ & $-6$ & $-2$ & $0$ & $4$ & $6$\\
    $2t-1 \equiv_\beta$ & $-5$ & $-1$ & $1$ & $5$ & $7$\\
    $2t+1 \equiv_\beta$ & $-3$ & $1$ & $3$ & $7$ & $9$\\
    \bottomrule
    \end{tabular}
    \end{center}
    \caption{The elements of the sets $L'_t$ and $L''_t$ modulo $\beta$, for certain values of $t \bmod \beta$.}
    \label{some_remainders}
    }
    \end{table}

    \bigskip\noindent
    \emph{Case $t-1 \equiv_\beta 0$}.\quad
    In this case we obtain $t \equiv_\beta 1$. From Table \ref{some_remainders} it is now clear that the element $t-2$ must have a unique remainder modulo $\beta$ in both $L'_t$ and $L''_t$.

    \bigskip\noindent
    \emph{Case $t-1 \equiv_\beta 2$}.\quad
    In this case we get $t \equiv_\beta 3$. Once again, Table \ref{some_remainders} tells us that the element $t-2$ must have a unique remainder modulo $\beta$ in both $L'_t$ and $L''_t$.

    \bigskip\noindent
    \emph{Case $t-1 \equiv_\beta 3$}.\quad
    Here, we conclude that $t \equiv_\beta 4$. According to Table \ref{some_remainders}, we see that the element $t$ necessarily has a unique remainder modulo $\beta$ in $L''_t$, whenever $\beta \ge 7$. On the other hand, if $\beta \ge 10$, then the element $0$ certainly has a unique remainder modulo $\beta$ within the set $L'_t$.

    \bigskip\noindent
    \emph{Case $t-1 \equiv_\beta 2t-2$}.\quad
    In this scenario we get $t \equiv_\beta 1$, hence this case is solved in absolutely the same way as the previous case $t-1 \equiv_\beta 0$.

    \bigskip\noindent
    \emph{Case $t-1 \equiv_\beta 2t-1$}.\quad
    Here, we immediately get $t \equiv_\beta 0$. By virtue of Table \ref{some_remainders}, we see that the element $0$ has a unique remainder modulo $\beta$ in the set $L'_t$. On the other hand, the set $L''_t$ contains no element with a unique remainder modulo $\beta$. In fact, we can group the elements of $L''_t$ into six equivalence pairs according to their remainders modulo $\beta$. We will rely on this fact later on.

    \bigskip\noindent
    \emph{Case $t-1 \equiv_\beta 2t+1$}.\quad
    In this case, we obtain $t \equiv_\beta -2$. It is easy to see from Table~\ref{some_remainders} that whenever $\beta \ge 10$, the element $t-2$ must have a unique remainder modulo $\beta$ within the set $L'_t$. Similarly, if $\beta \ge 7$, then the element $t+1$ surely has a unique remainder modulo $\beta$ inside the set $L''_t$.
\end{proof}

Now, by implementing Lemma \ref{unique_remainder_1}, we are able to prove the following lemma regarding the divisibility of $Q_t(x)$ and $R_t(x)$ polynomials by certain polynomials that shall be of later use to us.

\newpage
\begin{lemma}\label{weird_lemma_1}
    For any even $t \ge 6$ and each $\beta \ge 10$, the polynomial $Q_t(x)$ cannot be divisible by a polynomial $V(x) \in \mathbb{Q}[x]$ with at least two non-zero terms such that all of its terms have powers divisible by $\beta$. Similarly, for any even $t \ge 6$ and each $\beta \ge 7$, the polynomial $R_t(x)$ also cannot be divisible by any such $V(x)$.
\end{lemma}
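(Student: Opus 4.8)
The plan is to exploit the substitution $y = x^\beta$ together with the ``unique remainder'' property furnished by Lemma~\ref{unique_remainder_1}. The key structural observation is that $\mathbb{Q}[x]$ is a free module over $\mathbb{Q}[x^\beta]$ with basis $1, x, \dots, x^{\beta-1}$, so every $f \in \mathbb{Q}[x]$ admits a unique expansion $f(x) = \sum_{r=0}^{\beta-1} x^r f_r(x^\beta)$, where $f_r(y) \in \mathbb{Q}[y]$ collects precisely the terms of $f$ whose exponent is congruent to $r$ modulo $\beta$. First I would record the elementary reduction: if a polynomial whose exponents are all divisible by $\beta$ — that is, one of the form $W(x^\beta)$ — divides $f$, then $W(y)$ must divide each component $f_r(y)$ in $\mathbb{Q}[y]$. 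Indeed, writing $f = W(x^\beta)\,g$ and decomposing $g = \sum_r x^r g_r(x^\beta)$, a comparison of coordinates in the free module yields $f_r(y) = W(y)\,g_r(y)$ for every $r$.

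Any candidate divisor $V$ in the statement has all of its exponents divisible by $\beta$, hence $V(x) = W(x^\beta)$ for some $W \in \mathbb{Q}[y]$ with the same number of nonzero terms as $V$; thus it suffices to force $W$ to be a monomial. For $Q_t(x)$ with $\beta \ge 10$, Lemma~\ref{unique_remainder_1} supplies an exponent $e \in L'_t$ whose residue $r_0 = e \bmod \beta$ is attained by no other element of $L'_t$, so the component $(Q_t)_{r_0}(y)$ is a single nonzero monomial $c\,y^{(e-r_0)/\beta}$. Since $W$ divides this monomial and the only divisors of a nonzero monomial in $\mathbb{Q}[y]$ are themselves monomials, $W$ is a monomial and $V$ has just one nonzero term, contradicting the hypothesis. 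The identical argument, now applied to $L''_t$, settles $R_t(x)$ for every $\beta \ge 7$ with $\beta \nmid t$.

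The main obstacle is the single case that the unique-remainder lemma deliberately fails to cover, namely $R_t(x)$ with $\beta \mid t$: this is exactly the situation $t \equiv_\beta 0$ in which the proof of Lemma~\ref{unique_remainder_1} noted that $L''_t$ breaks into six equivalence pairs and no exponent enjoys a unique residue. Here I would argue instead by a direct coprimality computation on two carefully chosen components. Writing $m = t/\beta \ge 1$ and using $\beta \ge 7$ (so that the residues $0$ and $2$ each receive contributions only from the two expected terms), one computes $(R_t)_0(y) = 4y^m - 2$ and $(R_t)_2(y) = y^m + 2$. These are coprime in $\mathbb{Q}[y]$, since $4(y^m + 2) - (4y^m - 2) = 10$ is a nonzero constant. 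As $W$ divides both components, $W$ divides $10$ and is therefore a nonzero constant, hence a monomial, once more forcing $V$ to have a single nonzero term and contradicting the hypothesis. Together these cases exhaust the claim, and I expect the $\beta \mid t$ computation to be the only delicate point, precisely because it lies outside the reach of Lemma~\ref{unique_remainder_1}.
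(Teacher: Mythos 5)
Your proposal is correct and follows essentially the same route as the paper: decompose the polynomial into components indexed by the residue of the exponent modulo $\beta$, observe that any divisor $V(x)=W(x^\beta)$ must divide each component, invoke Lemma~\ref{unique_remainder_1} to produce a monomial component in the generic case, and treat the exceptional case $\beta\mid t$ for $R_t(x)$ by combining the residue-$0$ and residue-$2$ components (the paper forms $x^2(4x^t-2)+(x^{t+2}+2x^2)=5x^{t+2}$, while you reach the constant $10$ after factoring out $x^2$ --- the same computation in substance). The only cosmetic difference is your explicit free-module framing of the componentwise divisibility, which the paper states directly.
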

\begin{proof}
    Let $t \ge 6$ be an arbitrarily chosen even integer and let $\beta \ge 10$ be any positive integer. Suppose that the polynomial $Q_t(x)$ is divisible by a $V(x)$ with at least two non-zero terms such that all of its terms have powers divisible by $\beta$. Now, we will use $Q_t^{(\beta, j)}(x)$ to denote the polynomial composed of all the terms of $Q_t(x)$ whose powers are congruent to $j$ modulo $\beta$, for each $j = \overline{0, \beta-1}$. If we write
    \[
        Q_t(x) = V(x) \, V_1(x)
    \]
    and use the notation $V_1^{(\beta, j)}(x)$ in a manner analogous to the previously stated $Q_t^{(\beta, j)}(x)$, it becomes easy to notice that
    \[
        Q_t^{(\beta, j)}(x) = V(x) \, V_1^{(\beta, j)}(x)
    \]
    must hold for each $j = \overline{0, \beta-1}$. Hence, we obtain that $V(x) \mid Q_t^{(\beta, j)}(x)$ is true for each $j = \overline{0, \beta-1}$. However, by virtue of Lemma \ref{unique_remainder_1}, we know that there exists an element of $L'_t$ that has a unique remainder modulo $\beta$ within the set. This implies that there exists a $j$ such that $Q_t^{(\beta, j)}(x)$ has the form $c \, x^a$ for some $c \in \mathbb{Z} \setminus \{ 0 \}$ and $a \in \mathbb{N}_0$. By taking this into consideration, we get that $V(x) \mid c \, x^a$, which further implies that $V(x)$ cannot have more than one non-zero term, thus yielding a contradiction.

    Now, let $t \ge 6$ be any even integer and let $\beta \ge 7$ be some positive integer. Suppose that $R_t(x)$ is divisible by a $V(x)$ with at least two non-zero terms such that all of its terms have powers divisible by $\beta$. If $\beta \nmid t$, then we can obtain a contradiction by applying Lemma \ref{unique_remainder_1} in a manner that is entirely analogous to the technique previously used while dealing with the $Q_t(x)$ polynomial. Thus, we choose to omit the proof details of this case and focus solely on the remaining scenario when $\beta \mid t$ holds.

    By taking into consideration the remainders given in Table \ref{some_remainders}, we see that for $\beta \ge 7$ and $\beta \mid t$, the divisibility $V(x) \mid R_t(x)$ further implies
    \begin{align*}
        V(x) &\mid 4x^t - 2,\\
        V(x) &\mid x^{t+2} + 2x^2,
    \end{align*}
    from which we swiftly obtain
    \begin{alignat*}{2}
        && V(x) &\mid x^2 \, (4x^t - 2) + (x^{t+2} + 2x^2)\\
        \implies \quad && V(x) &\mid 5x^{t+2} ,
    \end{alignat*}
    thus yielding a contradiction once more, as desired.
\end{proof}

We now turn our attention to the cyclotomic polynomials and investigate the divisibility of $Q_t(x)$ and $R_t(x)$ by these polynomials, for all possible even values $t \ge 6$. By taking into consideration that each cyclotomic polynomial $\Phi_b(x)$ must have at least two non-zero terms, it becomes apparent that Lemma \ref{weird_lemma_1} will play a big role in our analysis to come. In fact, its usage is immediately demonstrated within the next lemma.

\begin{lemma}\label{weird_lemma_2}
    For each even $t \ge 6$, the divisibility $\Phi_b(x) \mid Q_t(x)$ for some $b \in \mathbb{N}$ implies
    \begin{itemize}
        \item $p^2 \nmid b$ for any prime number $p \ge 11$;
        \item $7^3 \nmid b$, $5^3 \nmid b$, $3^4 \nmid b$, $2^2 \nmid b$.
    \end{itemize}
    Also, for each even $t \ge 6$, the divisibility $\Phi_b(x) \mid R_t(x)$ for some $b \in \mathbb{N}$ implies
    \begin{itemize}
        \item $p^2 \nmid b$ for any prime number $p \ge 7$;
        \item $5^3 \nmid b$, $3^3 \nmid b$, $2^2 \nmid b$.
    \end{itemize}
\end{lemma}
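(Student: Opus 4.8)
The plan is to exploit the standard cyclotomic identity $\Phi_{p^e m}(x) = \Phi_{pm}\!\left(x^{p^{e-1}}\right)$, valid for any prime $p$ with $p \nmid m$ and any $e \ge 1$. It shows that whenever $p^e \mid b$ with $e \ge 2$, the polynomial $\Phi_b(x)$ is in fact a polynomial in $x^{p^{e-1}}$, so that every one of its (at least two) non-zero terms has a power divisible by $p^{e-1}$. Combined with Lemma \ref{weird_lemma_1}, which forbids $Q_t(x)$ (resp.\ $R_t(x)$) from being divisible by any polynomial with at least two non-zero terms all of whose powers are divisible by some $\beta \ge 10$ (resp.\ $\beta \ge 7$), this rules out most high prime-power divisors of $b$ at once. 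Note that Theorem \ref{filaseta} is \emph{not} needed here; it is reserved for cutting the surviving candidate values of $b$ down to a finite list later on.

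Concretely, for $Q_t(x)$ I would set $\beta = p^{e-1}$ and require $\beta \ge 10$. This is met with $e = 2$ when $p \ge 11$, giving $p^2 \nmid b$; with $e = 3$ when $p \in \{5, 7\}$, giving $5^3 \nmid b$ and $7^3 \nmid b$; and with $e = 4$ when $p = 3$, giving $3^4 \nmid b$. In each case $\Phi_b(x)$, being a polynomial in $x^{p^{e-1}}$ with $p^{e-1} \ge \beta \ge 10$, would be a divisor of exactly the shape forbidden by Lemma \ref{weird_lemma_1}, a contradiction. The analogous computation for $R_t(x)$ uses the weaker threshold $\beta \ge 7$ and yields $p^2 \nmid b$ for all $p \ge 7$, together with $5^3 \nmid b$ and $3^3 \nmid b$.

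The only claims not covered by this reduction are $2^2 \nmid b$ for both polynomials, since there the identity only forces the powers of $\Phi_b(x)$ to be even, and $\beta = 2$ lies below both thresholds. For this borderline case I would argue by parity. If $4 \mid b$, then $\Phi_b(x)$ is an even polynomial, so it divides the even part $E(x)$ and the odd part $O(x)$ of $Q_t(x)$ (resp.\ $R_t(x)$) separately, hence also $\tilde{O}(x) = O(x)/x$ since $\Phi_b(x)$ is coprime to $x$. A direct computation shows that in both cases $E(x) + \tilde{O}(x) = 2\left(x^{2t} - 1\right)$, whence $\Phi_b(x) \mid x^{2t} - 1$ and therefore $b \mid 2t$. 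Writing $s = 2t/b$ and letting $\xi$ be a primitive $b$-th root of unity, one has $\xi^{2t} = 1$ and $\xi^t = (-1)^s$ (using $\xi^{b/2} = -1$, as $b$ is even). Substituting these into the relation $E(\xi) = 0$ collapses it to the form $\xi^2 + \xi^{-2} = c$ for an explicit rational constant $c$ depending on the parity of $s$ and on which polynomial is considered. In each of the four resulting subcases $c$ is either non-integral, contradicting the fact that $\xi^2 + \xi^{-2}$ is an algebraic integer, or satisfies $|c| > 2$, contradicting $\left|\xi^2 + \xi^{-2}\right| \le 2$. Either way $4 \mid b$ is impossible.

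The main obstacle is precisely this $p = 2$ case: the clean cyclotomic-identity reduction breaks down because halving the exponent only guarantees divisibility by $2$, far below the thresholds of Lemma \ref{weird_lemma_1}. What rescues the argument is the parity decomposition together with the fortuitous collapse $E(x) + \tilde{O}(x) = 2\left(x^{2t} - 1\right)$, which is the one point where the specific coefficients of $Q_t$ and $R_t$ are used rather than merely their term counts; verifying that this collapse genuinely occurs, and that the resulting constants $c$ are truly obstructed, is the heart of that case.
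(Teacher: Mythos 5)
Your treatment of the odd prime powers is exactly the paper's: the published proof likewise invokes the identity $\Phi_b(x) = \Phi_{b/p^{k-1}}\bigl(x^{p^{k-1}}\bigr)$ and feeds the resulting divisibility-of-exponents constraint into Lemma \ref{weird_lemma_1} with $\beta = p$, $49$, $25$, $27$ (resp.\ $9$ for $R_t$), so that half of your proposal coincides with the paper. Where you genuinely diverge is the case $4 \mid b$. The paper also splits $Q_t$ (resp.\ $R_t$) into its even part and odd part, but then exhibits ad hoc multiplier polynomials $C(x), D(x)$ for which the combination $A(x)C(x) + B(x)D(x)$ collapses to a fixed polynomial such as $x(x-1)^2(x+1)^2(3x^4 - 2x^2 + 3)$, and finishes by checking in Appendix \ref{problematic_roots} that the remaining factors have no roots of unity among their roots. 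Your route is cleaner and needs no appendix: the collapse $E(x) + O(x)/x = 2\left(x^{2t} - 1\right)$ does hold for both $Q_t$ and $R_t$ (the cancellation is exact; every term of $E$ other than $2x^{2t}-2$ is matched by its negative in $O(x)/x$), so $\Phi_b(x) \mid x^{2t} - 1$, hence $b \mid 2t$, and the problem reduces to evaluating $E$ at a primitive $b$-th root of unity. This replaces the paper's unexplained choice of $C$ and $D$ by a structural observation, and it avoids the computer-checked tables entirely.

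There is, however, one concrete flaw at the final step. The four constants are: for $Q_t$, $\xi^2 + \xi^{-2} = 2$ when $s$ is even and $2/3$ when $s$ is odd; for $R_t$, $-2/3$ when $s$ is even and $6$ when $s$ is odd. Three of these are killed by your dichotomy, but $c = 2$ is an integer with $|c| = 2$, not $|c| > 2$, so the subcase ($Q_t$, $s$ even) escapes both horns as you stated them --- and that subcase is not vacuous a priori, since $s = 2t/b$ can be even (e.g.\ $b \mid t$). The repair is immediate: $\xi^2 + \xi^{-2} = 2$ forces $(\xi^2 - 1)^2 = 0$, i.e.\ $\xi^2 = 1$, which is impossible for a primitive $b$-th root of unity with $4 \mid b$. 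With that one line added, your proof is complete and is a genuine improvement on the paper's argument for this case.
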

\begin{proof}
    Let $b \in \mathbb{N}$ be such that $p^2 \mid b$ for some prime number $p$. In this case, $\frac{b}{p}$ is a positive integer divisible by $p$, hence we get that $\Phi_b(x) = \Phi_{\frac{b}{p}}(x^p)$ (see, for example, \cite[p.\ 160]{Nagell}). Similarly, if $p^k \mid b$ for some $k \ge 2$, we inductively obtain that $\Phi_b(x) = \Phi_{\frac{b}{p^{k-1}}}(x^{p^{k-1}})$. We will now implement this observation in order to complete the proof of the lemma by dividing it into two separate cases for $Q_t(x)$ and $R_t(x)$.

    \bigskip\noindent
    \emph{Case $Q_t(x)$}.\quad
    Suppose that $\Phi_b(x) \mid Q_t(x)$ for some even $t \ge 6$ and some $b \in \mathbb{N}$. If $p^2 \mid b$ for some prime number $p \ge 11$, we then get that $\Phi_b(x) = \Phi_{\frac{b}{p}}(x^p)$, hence all the terms of $\Phi_b(x)$ must have powers divisible by $p \ge 11$. By virtue of Lemma \ref{weird_lemma_1}, the divisibility $\Phi_b(x) \mid Q_t(x)$ cannot hold, hence we obtain a contradiction.

    If we suppose that $7^3 \mid b$ or $5^3 \mid b$ or $3^4 \mid b$, we get that all the terms of $\Phi_b(x)$ must have powers divisible by $49$ or $25$ or $27$, respectively. In each of these cases, Lemma \ref{weird_lemma_1} tells us that the divisibility $\Phi_b(x) \mid Q_t(x)$ does not hold, thus yielding a contradiction. In order to prove the part of the lemma regarding the $Q_t(x)$ polynomial, it becomes sufficient to show that $4 \mid b$ cannot be true.

    Now, suppose that $4 \mid b$ holds. In this case, we immediately get that $\Phi_b(x)$ contains only terms whose powers are even. By taking into consideration that the numbers $0, 2, t-2, t+2, 2t-2$ are even, while $3, t-1, t+3, 2t-1, 2t+1$ are odd, we conclude that
    \begin{align*}
        \Phi_b(x) &\mid 2x^{2t-2} - x^{t+2} - x^{t-2} + 2x^2 - 2 ,\\
        \Phi_b(x) &\mid 2x^{2t+1} - 2x^{2t-1} + x^{t+3} + x^{t-1} - 2x^3 .
    \end{align*}
    If we denote
    \begin{align*}
        A(x) &= 2x^{2t-2} - x^{t+2} - x^{t-2} + 2x^2 - 2,\\
        B(x) &= 2x^{2t+1} - 2x^{2t-1} + x^{t+3} + x^{t-1} - 2x^3,\\
        C(x) &= x^{t+7} - x^{t+5} + x^{t+3} - x^{t+1} + \frac{1}{2}x^9 + 2x^7 - 3x^5 + 4x^3 - \frac{3}{2}x,\\
        D(x) &= -x^{t+4}-x^t+\frac{1}{2}x^8-x^4+2x^2-\frac{3}{2},
    \end{align*}
    then it can be further obtained that
    \begin{alignat*}{2}
        && \Phi_b(x) &\mid A(x) \, C(x) + B(x) \, D(x)\\
        \implies \quad && \Phi_b(x) &\mid 3x^9 - 8x^7 + 10x^5 - 8x^3 + 3x\\
        \implies \quad && \Phi_b(x) &\mid x (x-1)^2 (x+1)^2 (3x^4 - 2x^2 + 3)\\
        \implies \quad && \Phi_b(x) &\mid 3x^4 - 2x^2 + 3 .
    \end{alignat*}
    However, the polynomial $3x^4 - 2x^2 + 3 \in \mathbb{Q}[x]$ has no roots of unity among its roots, as demonstrated in Appendix \ref{problematic_roots}, thus yielding a contradiction. This means that $4 \mid b$ cannot possibly be true, as desired.

    \bigskip\noindent
    \emph{Case $R_t(x)$}.\quad
    Suppose that $\Phi_b(x) \mid R_t(x)$ for some even $t \ge 6$ and some $b \in \mathbb{N}$. It can be shown that $p^2 \nmid b$ for any prime $p \ge 7$, as well as $5^3 \nmid b$ and $3^3 \nmid b$, by implementing Lemma \ref{weird_lemma_1} in a completely analogous manner as done in the proof of the previous case. For this reason, we choose to leave out the according details. Thus, in order to finalize the proof, it is enough to show that $4 \nmid b$.

    Suppose that $4 \mid b$ does hold. Similarly as in the previous case, we conclude that $\Phi_b(x)$ contains only terms whose powers are even. Besides that, the numbers $0, 2, t-2, t, t+2, 2t-2$ are even, while $3, t-1, t+1, t+3, 2t-1, 2t+1$ are odd. Bearing this in mind, we get
    \begin{align*}
        \Phi(b) &\mid 2x^{2t-2}  + x^{t+2} + 4x^t + x^{t-2} + 2x^2 - 2 ,\\
        \Phi(b) &\mid 2x^{2t+1} - 2x^{2t-1} - x^{t+3} - 4x^{t+1} - x^{t-1} - 2x^3 .
    \end{align*}
    Now, if we denote
    \begin{align*}
        A(x) &= 2x^{2t-2}  + x^{t+2} + 4x^t + x^{t-2} + 2x^2 - 2 ,\\
        B(x) &= 2x^{2t+1} - 2x^{2t-1} - x^{t+3} - 4x^{t+1} - x^{t-1} - 2x^3 ,\\
        C(x) &= -x^{t+7} - 3x^{t+5} + 3x^{t+3} + x^{t+1} + \frac{1}{2} x^9 + 6x^7 + 5x^5 + 8x^3 - \frac{3}{2}x,\\
        D(x) &= x^{t+4} + 4x^{t+2} + x^t + \frac{1}{2} x^8 + 4x^6 + 7x^4 + 6x^2 - \frac{3}{2},
    \end{align*}
    it is clear that
    \begin{alignat*}{2}
        && \Phi_b(x) &\mid A(x) \, C(x) + B(x) \, D(x)\\
        \implies \quad && \Phi_b(x) &\mid 3x^9 - 16x^7 - 6x^5 - 16x^3 + 3x\\
        \implies \quad && \Phi_b(x) &\mid x (x^2 - 2x - 1) (x^2 + 2x - 1) (3x^4 + 2x^2 + 3)\\
        \implies \quad && \Phi_b(x) &\mid (x^2 - 2x - 1) (x^2 + 2x - 1) (3x^4 + 2x^2 + 3) .
    \end{alignat*}
    However, neither of the polynomials $x^2 - 2x - 1, x^2 + 2x - 1, 3x^4 + 2x^2 + 3$ contains a root that represents a root of unity, which immediately leads us to a contradiction. Thus, we reach $4 \nmid b$.
\end{proof}

Lemma \ref{weird_lemma_2} indicates that the cyclotomic polynomials which divide $Q_t(x)$ and $R_t(x)$ are very specific. Moreover, it can be shown that for any $b \ge 3$, the cyclotomic polynomial $\Phi_b(x)$ can divide neither $Q_t(x)$ nor $R_t(x)$. In fact, our next step shall be to prove this exact statement. In order to do this, we will need the following two short auxiliary lemmas.

\begin{lemma}\label{p_2p_1}
    For each even $t \ge 6$ and each prime number $p \ge 11$, $Q_t(x)$ cannot be divisible by $\Phi_p(x)$ or $\Phi_{2p}(x)$. Similarly, for any even $t \ge 6$ and any prime number $p \ge 13$, $R_t(x)$ cannot be divisible by $\Phi_p(x)$ or $\Phi_{2p}(x)$.
\end{lemma}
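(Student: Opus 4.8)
The plan is to translate each divisibility question into a congruence modulo $x^p-1$ or $x^p+1$, and then run a term-counting argument fuelled by Lemma \ref{unique_remainder_1}. Recall that for an odd prime $p$ one has $\Phi_p(x) = 1 + x + \cdots + x^{p-1}$ and $\Phi_{2p}(x) = 1 - x + \cdots + x^{p-1}$, both of degree $p-1$. If $\zeta$ is a primitive $p$-th root of unity, then $\zeta^p = 1$, so reducing every exponent of $Q_t(x)$ modulo $p$ yields a polynomial $\overline{Q_t}(x)$ of degree less than $p$ with $\overline{Q_t}(\zeta) = Q_t(\zeta)$; since $\Phi_p$ is the minimal polynomial of $\zeta$ and $\deg \overline{Q_t} < p$, the divisibility $\Phi_p(x) \mid Q_t(x)$ is equivalent to $\overline{Q_t}(x) = c\,\Phi_p(x)$ for some constant $c$. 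Writing $\eta$ for a primitive $2p$-th root of unity and using $\eta^p = -1$, reduction modulo $x^p + 1$ likewise turns $Q_t(x)$ into a polynomial of degree less than $p$, and $\Phi_{2p}(x) \mid Q_t(x)$ is equivalent to that reduction being a scalar multiple of $\Phi_{2p}(x)$. The same dictionary applies verbatim to $R_t(x)$.

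The key observation is a term count. A nonzero scalar multiple of $\Phi_p(x)$ or $\Phi_{2p}(x)$ has exactly $p$ nonzero terms, whereas the reduction of $Q_t(x)$ modulo either $x^p - 1$ or $x^p + 1$ can have at most $10$ nonzero terms, since $Q_t(x)$ has only $10$ nonzero terms and each maps to a single power of $x$. Hence, as soon as $p \ge 11$, the scalar $c$ must vanish, i.e.\ the reduction is \emph{identically zero}. The identical reasoning, now with the bound $p \ge 13$, forces the reduction of $R_t(x)$ (which has $12$ nonzero terms) to vanish. Consequently, it suffices to exhibit in each case a single nonzero coefficient in the reduced polynomial, contradicting its vanishing and thereby ruling out the divisibility.

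For $Q_t(x)$, such a surviving coefficient is immediate from Lemma \ref{unique_remainder_1}: taking $\beta = p \ge 11 \ge 10$, the set $L'_t$ contains an element whose residue modulo $p$ is attained by no other element of $L'_t$. That residue class therefore receives the contribution of exactly one term of $Q_t(x)$, whose coefficient ($\pm 1$ or $\pm 2$) survives in the reduction irrespective of the sign picked up in the $x^p + 1$ case; this settles both $\Phi_p(x)$ and $\Phi_{2p}(x)$ at once. For $R_t(x)$ with $p \nmid t$, the very same argument goes through via the second part of Lemma \ref{unique_remainder_1}, now with $\beta = p \ge 13 \ge 7$ and $\beta \nmid t$, producing a unique residue in $L''_t$ and hence a nonzero surviving coefficient.

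The one case where Lemma \ref{unique_remainder_1} supplies no unique residue, and thus the main obstacle, is $R_t(x)$ with $p \mid t$: here the twelve exponents of $L''_t$ collapse into exactly six residue classes modulo $p$, each containing two exponents. I would resolve this by directly computing the constant term of the reduction. For $x^p - 1$, the exponents $0$ and $t$ are the only ones congruent to $0$ modulo $p$ (as $p \ge 13$), so this constant term equals $(-2) + 4 = 2 \ne 0$. For $x^p + 1$, one must track the signs $(-1)^{\lfloor j/p \rfloor}$ introduced by $\eta^p = -1$; writing $t = ps$, the exponent $t$ contributes with sign $(-1)^s$, so the constant term becomes $-2 + 4(-1)^s \in \{2, -6\}$, again nonzero. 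In both situations the reduction fails to be identically zero, contradicting the conclusion forced by the term count and completing the proof. The only delicate point is precisely this sign bookkeeping in the $x^p + 1$ reduction; everything else follows cleanly from the counting principle and Lemma \ref{unique_remainder_1}.
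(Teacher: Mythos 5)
Your proof is correct and follows essentially the same route as the paper: reduce the exponents modulo $p$ (with signs $(-1)^{\lfloor j/p\rfloor}$ for $\Phi_{2p}$), use the degree bound to force the reduction to be a scalar multiple of the cyclotomic polynomial, count nonzero terms to kill the scalar, and invoke Lemma \ref{unique_remainder_1} (plus the direct constant-term computation $-2+4(-1)^s\neq 0$ when $p \mid t$) to show the reduction is not identically zero. Your observation that $\Phi_p(x)$ has $p$ nonzero terms even tidies up the $p=13$ edge case, which the paper treats with a separate coefficient-comparison argument.
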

\begin{proof}
    The lemma statement about the $Q_t(x)$ polynomial can be proved in a fairly analogous manner as the part regarding $R_t(x)$. In fact, the proof of $\Phi_p(x) \nmid R_t(x)$ and $\Phi_{2p}(x) \nmid R_t(x)$ is slightly more difficult to perform due to the existence of one additional edge case which does not exist when we are dealing with $Q_t(x)$. For this reason, we choose to leave out the proof details for $\Phi_p(x) \nmid Q_t(x)$ and $\Phi_{2p}(x) \nmid Q_t(x)$ and focus solely on the $R_t(x)$ polynomial.

    Now, let $t \ge 6$ be an arbitrary even integer and let $p \ge 11$ be some prime number. By noticing that
    \begin{align*}
        \Phi_p(x) = \sum_{j = 0}^{p-1} x^j, && \Phi_{2p}(x) = \sum_{j = 0}^{p-1} (-1)^j x^j ,
    \end{align*}
    we immediately see that $\deg \Phi_p = \deg \Phi_{2p} = p-1$. We will finalize the proof by splitting the problem into two cases depending on whether we are dealing with $\Phi_p(x)$ or $\Phi_{2p}(x)$.

    \bigskip\noindent
    \emph{Case $\Phi_p(x)$}.\quad
    Let $R_t^{\bmod p}(x)$ be the following polynomial:
    \begin{align*}
        R_t^{\bmod p}(x) = 2x^{(2t+1) \bmod p} &- 2x^{(2t-1) \bmod p} + 2x^{(2t-2) \bmod p} - x^{(t+3) \bmod p}\\
        &+ x^{(t+2) \bmod p} - 4x^{(t+1) \bmod p} + 4x^{t \bmod p}\\
        &- x^{(t-1) \bmod p} + x^{(t-2) \bmod p} - 2x^3 + 2x^2 - 2.
    \end{align*}
    It is clear that $\Phi_p(x) \mid R_t(x)$ holds if and only if $\Phi_p(x) \mid R_t^{\bmod p}(x)$ does, too. If we suppose that $\Phi_p(x) \mid R_t(x)$ is true and take into consideration that
    \[
        \deg R_t^{\bmod p} \le p-1 = \deg \Phi_p ,
    \]
    we quickly obtain two possibilities:
    \begin{itemize}
        \item $R_t^{\bmod p}(x) \equiv 0$;
        \item $R_t^{\bmod p}(x) = c \, \Phi_p(x)$ for some $c \in \mathbb{Q} \setminus \{ 0 \}$.
    \end{itemize}

    It is not difficult to see that $R_t^{\bmod p}(x) \equiv 0$ cannot hold. If $p \nmid t$, then Lemma~\ref{unique_remainder_1} dictates that there exists an element of $L''_t$ that has a unique remainder modulo $p$ within that set. Hence, $R_t^{\bmod p}(x)$ must have at least one term corresponding to that element. On the other hand, if $p \mid t$, then according to Table \ref{some_remainders}, in order for $R_t^{\bmod p}(x) \equiv 0$ to be true, we would need $4x^{t \bmod p} - 2 = 0$ to hold, which clearly does not. It is worth pointing out that while performing the analogous proof for $Q_t(x)$, the edge case $p \mid t$ does not exist, which can immediately be noticed in the formulation itself of Lemma \ref{unique_remainder_1}.

    Now, suppose that $R_t^{\bmod p}(x) = c \, \Phi_p(x)$ holds for some $c \in \mathbb{Q} \setminus \{ 0 \}$. The polynomial $\Phi_p(x)$ has $p-1$ non-zero terms, which means that $R_t^{\bmod p}(x)$ needs to have exactly $p-1$ non-zero terms as well. This is obviously not possible whenever $p > 13$, since $R_t^{\bmod p}(x)$ can have at most $12$ non-zero terms. As for the scenario when $p = 13$, it is easy to see that $R_t^{\bmod p}(x)$ has $12$ non-zero terms if and only if all the elements of the set $L''_t$ have mutually distinct remainders modulo $p$. However, in that case we see that $R_t^{\bmod p}(x) = c \, \Phi_p(x)$ cannot possibly be true due to the fact that the coefficients corresponding to the non-zero terms of $\Phi_p(x)$ are all equal, while that would clearly not be the case for $R_t^{\bmod p}(x)$.

    \bigskip\noindent
    \emph{Case $\Phi_{2p}(x)$}.\quad
    Let $\hat{R}_t^{\bmod p}(x)$ be the following polynomial:
    \begin{align*}
        \hat{R}_t^{\bmod p}(x) = 2 & (-1)^{\lfloor\frac{2t+1}{p}\rfloor} x^{(2t+1) \bmod p} - 2 (-1)^{\lfloor\frac{2t-1}{p}\rfloor} x^{(2t-1) \bmod p}\\
        &+ 2 (-1)^{\lfloor\frac{2t-2}{p}\rfloor} x^{(2t-2) \bmod p} - (-1)^{\lfloor\frac{t+3}{p}\rfloor} x^{(t+3) \bmod p}\\
        &+ (-1)^{\lfloor\frac{t+2}{p}\rfloor} x^{(t+2) \bmod p} - 4 (-1)^{\lfloor\frac{t+1}{p}\rfloor} x^{(t+1) \bmod p}\\
        &+ 4 (-1)^{\lfloor\frac{t}{p}\rfloor} x^{t \bmod p} - (-1)^{\lfloor\frac{t-1}{p}\rfloor} x^{(t-1) \bmod p}\\
        &+ (-1)^{\lfloor\frac{t-2}{p}\rfloor} x^{(t-2) \bmod p} - 2x^3 + 2x^2 - 2.
    \end{align*}
    We know that each primitive $2p$-th root of unity gives $-1$ when raised to the power of $p$. For this reason, it is not difficult to conclude that $\Phi_{2p}(x) \mid R_t(x)$ is equivalent to $\Phi_{2p}(x) \mid \hat{R}_t^{\bmod p}(x)$. Now, if we suppose that $\Phi_{2p}(x) \mid R_t(x)$ holds and bear in mind that
    \[
        \deg \hat{R}_t^{\bmod p} \le p-1 = \deg \Phi_{2p} ,
    \]
    we reach the same two possibilities as in the previous case:
    \begin{itemize}
        \item $\hat{R}_t^{\bmod p}(x) \equiv 0$;
        \item $\hat{R}_t^{\bmod p}(x) = c \, \Phi_p(x)$ for some $c \in \mathbb{Q} \setminus \{ 0 \}$.
    \end{itemize}
    The rest of the proof can be carried out in a manner analogous to the previous case. Thus, we choose to omit it.
\end{proof}

\begin{lemma}\label{concrete_b_1}
    For each even $t \ge 6$, $Q_t(x)$ cannot be divisible by a cyclotomic polynomial $\Phi_b(x)$ where $b \ge 3$ is a positive integer such that
    \begin{itemize}
        \item it does not have any prime factors outside of the set $\{2, 3, 5, 7\}$;
        \item it does not contain all the prime factors from the set $\{3, 5, 7 \}$;
        \item $2^2 \nmid b$, $3^4 \nmid b$, $5^3 \nmid b$, $7^3 \nmid b$.
    \end{itemize}
    Also, for any even $t \ge 6$, $R_t(x)$ cannot be divisible by a cyclotomic polynomial $\Phi_b(x)$ where $b \ge 3$ is a positive integer such that
    \begin{itemize}
        \item it does not have any prime factors outside of the set $\{2, 3, 5, 7, 11\}$;
        \item it does not contain both prime factors from the set $\{7, 11\}$ or from the set $\{5, 11\}$;
        \item $2^2 \nmid b$, $3^3 \nmid b$, $5^3 \nmid b$, $7^2 \nmid b$, $11^2 \nmid b$.
    \end{itemize}
\end{lemma}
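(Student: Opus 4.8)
The plan is to turn the statement into a finite verification. The three bullet conditions confine $b$ to an explicitly finite set: for $Q_t(x)$ the admissible $b$ have the form $b = 2^{a_2}3^{a_3}5^{a_5}7^{a_7}$ with $a_2 \le 1$, $a_3 \le 3$, $a_5 \le 2$, $a_7 \le 2$ and with at least one of $a_3,a_5,a_7$ vanishing, whereas for $R_t(x)$ one gets $b = 2^{a_2}3^{a_3}5^{a_5}7^{a_7}11^{a_{11}}$ with $a_2\le 1$, $a_3 \le 2$, $a_5\le 2$, $a_7\le 1$, $a_{11}\le 1$, where $a_{11}=1$ forces $a_5 = a_7 = 0$. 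In both cases only finitely many $b$ survive (the largest of order a few thousand), so the assertions $\Phi_b(x) \nmid Q_t(x)$ and $\Phi_b(x) \nmid R_t(x)$ reduce to finitely many claims, each still to be checked uniformly over all even $t \ge 6$. It is worth noting that this is exactly the regime where Theorem \ref{filaseta} gives nothing: for such $b$ the primes dividing $b$ satisfy $\sum (p_j-2) \le N-2$, so no prime can be removed, and these are genuinely the terminal cases left over by Lemmas \ref{weird_lemma_2} and \ref{p_2p_1}.

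To eliminate the dependence on $t$, I would first record the factorizations
\begin{align*}
Q_t(x) &= 2x^{2t-2}(x^3 - x + 1) + x^{t-2}(x-1)(x^4+1) - 2(x^3 - x^2 + 1),\\
R_t(x) &= 2x^{2t-2}(x^3 - x + 1) - x^{t-2}(x-1)(x^4 + 4x^2 + 1) - 2(x^3 - x^2 + 1).
\end{align*}
Evaluating at a primitive $b$-th root of unity $\zeta$ and clearing the factor $\zeta^{-2}$ turns $Q_t(\zeta)=0$ into
\[
2(\zeta^3 - \zeta + 1)\,\zeta^{2t} + (\zeta-1)(\zeta^4+1)\,\zeta^{t} - 2\zeta^2(\zeta^3 - \zeta^2 + 1) = 0,
\]
and likewise for $R_t$ with the middle coefficient replaced by $-(\zeta-1)(\zeta^4+4\zeta^2+1)$. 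Since $x^3 - x + 1$ and its reciprocal $x^3 - x^2 + 1$ have no cyclotomic factors, the leading and constant coefficients never vanish at a root of unity, so this is a genuine quadratic in the single unknown $u = \zeta^t$. By irreducibility of $\Phi_b(x)$, the divisibility $\Phi_b(x) \mid Q_t(x)$ is equivalent to $Q_t(\zeta) = 0$ for this one $\zeta$, hence to $\zeta^t$ being a root of the quadratic; and since $\zeta^t$ depends only on $t \bmod b$, the whole question for a fixed $b$ becomes: does either root of the quadratic lie in the group $\langle \zeta \rangle$ of $b$-th roots of unity?

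For each of the finitely many admissible $b$ I would then verify that neither root of the associated quadratic is a $b$-th root of unity. Concretely this can be carried out exactly as in the mod-$p$ reduction of Lemma \ref{p_2p_1}: reduce every exponent of $Q_t(x)$ (resp.\ $R_t(x)$) modulo $b$ to obtain a polynomial of degree below $b$ with the same $\Phi_b$-divisibility, parametrized only by $r = t \bmod b$, and then rule out $\Phi_b(x) \mid (\cdot)$ either by inspecting its at-most-$10$ (resp.\ $12$) coefficients or by an explicit root-of-unity test of the type used in Appendix~\ref{problematic_roots}. The quadratic reformulation is what keeps this finite and human-sized: for each $b$ it pins $\zeta^t$ down to at most two algebraic candidates, so at most two residues $r$ need to be examined, and one typically sees at once that the candidate roots fail to be $b$-th roots of unity.

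The principal obstacle I anticipate is organizational rather than conceptual. The admissible set of $b$ is sizeable, each value carries its own residue analysis, and the $R_t(x)$ case is the heavier one, since it has two extra terms, the additional prime $11$, and a larger candidate set. One must also treat with care the degenerate case $\beta \mid t$, in which, by Lemma \ref{unique_remainder_1}, the set $L''_t$ loses its uniquely-occurring exponent and its twelve members collapse into six equivalence pairs modulo $\beta$; here the unique-remainder shortcut is unavailable, and I would instead argue as in the proofs of Lemmas \ref{weird_lemma_1} and \ref{p_2p_1}, where this same situation is resolved through the explicit low-degree combinations (such as $4x^{t}-2$ and $x^{t+2}+2x^2$) that the surviving terms are forced to form. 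Confirming that none of the finitely many pairs $(b,r)$ yields a genuine divisibility — ideally through the uniform quadratic argument, but if necessary by the finite computation that the entire reduction has been engineered to make feasible — is the step I expect to be the most laborious.
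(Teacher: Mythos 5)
Your proposal is correct and follows essentially the same route as the paper: both reduce the statement to a finite verification by noting that the admissible $b$ form an explicit finite set and that $\Phi_b(x)\mid Q_t(x)$ (resp.\ $R_t(x)$) depends only on $t \bmod b$ after reducing all exponents modulo $b$, after which the finitely many cases are checked computationally (the paper delegates this to the Mathematica runs in Appendices~\ref{qt_inspection} and \ref{rt_inspection}). Your additional observation that $Q_t(\zeta)=0$ is a quadratic in $\zeta^t$ with coefficients that are nonvanishing at roots of unity is a nice organizational refinement, but the argument still bottoms out in the same finite check.
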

\begin{proof}
    Let $Q_t^{\bmod b}(x)$ and $R_t^{\bmod b}(x)$ be the next two polynomials:
    \begin{align*}
        Q_t^{\bmod b}(x) = 2&x^{(2t+1) \bmod b} - 2x^{(2t-1) \bmod b} + 2x^{(2t-2) \bmod b} + x^{(t+3) \bmod b}\\
        &- x^{(t+2) \bmod b} + x^{(t-1) \bmod b} - x^{(t-2) \bmod b}\\
        &- 2x^{3 \bmod b} + 2x^{2 \bmod b} - 2,\\
        R_t^{\bmod b}(x) = 2&x^{(2t+1) \bmod b} - 2x^{(2t-1) \bmod b} + 2x^{(2t-2) \bmod b} - x^{(t+3) \bmod b}\\
        &+ x^{(t+2) \bmod b} - 4x^{(t+1) \bmod b} + 4x^{t \bmod b} - x^{(t-1) \bmod b}\\
        &+ x^{(t-2) \bmod b} - 2x^{3 \bmod b} + 2x^{2 \bmod b} - 2.
    \end{align*}
    Here, it is crucial to point out that $\Phi_b(x) \mid Q_t(x) \iff \Phi_b(x) \mid Q_t^{\bmod b}(x)$ and $\Phi_b(x) \mid R_t(x) \iff \Phi_b(x) \mid R_t^{\bmod b}(x)$. However, for a fixed value of $b \in \mathbb{N}$, it is clear that there exist only finitely many polynomials $Q_t^{\bmod b}(x)$ and $R_t^{\bmod b}(x)$ as $t$ ranges over the even integers greater than or equal to $6$. Thus, if we show that $\Phi_b(x)$ divides none of these concrete $Q_t^{\bmod b}(x)$ polynomials, this is sufficient to prove that $\Phi_b(x)$ does not divide $Q_t(x)$. The same can be said regarding $R_t(x)$.

    In order to prove the lemma, it is enough to demonstrate that $\Phi_b(x) \nmid Q_t^{\bmod b}(x)$ and $\Phi_b(x) \nmid R_t^{\bmod b}(x)$ for all the required values of $b$ and for all the possible remainders $t \bmod b$. However, the lemma formulation specifies only a finite set of $b$ values corresponding to $Q_t(x)$ and to $R_t(x)$. For this reason, it is trivial to perform the proof of the lemma via computer. The required computational results are disclosed in Appendices \ref{qt_inspection} and \ref{rt_inspection}.
\end{proof}

We now proceed to prove the aforementioned statement regarding the divisibility of $Q_t(x)$ and $R_t(x)$ polynomials by cyclotomic polynomials. In order to do this, we shall heavily rely on Theorem \ref{filaseta}. The reason why this theorem is so convenient to use is clear --- it is due to the sheer fact that the $Q_t(x)$ and $R_t(x)$ polynomials have very few non-zero terms.

\begin{lemma}\label{qt_cyclotomic}
    For each even $t \ge 6$, $Q_t(x)$ is not divisible by any cyclotomic polynomial $\Phi_b(x)$ where $b \ge 3$.
\end{lemma}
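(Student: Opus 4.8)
The plan is to argue by contradiction: I take $b \ge 3$ to be the \emph{smallest} integer for which $\Phi_b(x) \mid Q_t(x)$, and then whittle it down to one of the configurations already excluded. The crucial numerical input is that $Q_t(x)$ has exactly $N = 10$ nonzero terms, so the Filaseta--Schinzel bound $N-2 = 8$ in Theorem \ref{filaseta} has two convenient consequences: a single prime $p$ satisfies $p-2 > 8$ precisely when $p \ge 11$, and the three smallest odd primes satisfy $(3-2)+(5-2)+(7-2) = 9 > 8$. These two facts let me peel primes off $b$ in exactly the manner that the preceding lemmas are designed to exploit, while Lemma \ref{weird_lemma_2} supplies throughout the squarefreeness of $b$ away from $\{2,3,5,7\}$ together with the caps $2^2 \nmid b$, $3^4 \nmid b$, $5^3 \nmid b$, $7^3 \nmid b$.

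First I would dispose of the case where $b$ has a prime factor $p \ge 11$. Applying Theorem \ref{filaseta} with the single prime $p$ (legitimate since $p-2 \ge 9 > 8$) yields $\Phi_{b/p^{e_p}}(x) \mid Q_t(x)$, and since Lemma \ref{weird_lemma_2} forbids $p^2 \mid b$ we have $e_p = 1$, so $\Phi_{b/p}(x) \mid Q_t(x)$. If $b/p \ge 3$, this contradicts the minimality of $b$; the only remaining alternatives are $b/p = 1$ or $b/p = 2$, that is $b = p$ or $b = 2p$, both of which are excluded by Lemma \ref{p_2p_1}. Consequently I may assume from here on that every prime factor of $b$ lies in $\{2,3,5,7\}$.

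It then remains to treat such $b$. If $b$ is \emph{not} divisible by all three of $3,5,7$, then together with the prime-power bounds from Lemma \ref{weird_lemma_2} it meets every hypothesis of Lemma \ref{concrete_b_1}, which directly rules out $\Phi_b(x) \mid Q_t(x)$. The sole genuinely remaining possibility is $105 = 3\cdot 5 \cdot 7 \mid b$; here I would invoke Theorem \ref{filaseta} with the prime set $\{3,5,7\}$, whose weight $9$ exceeds $8$, to strip some $q \in \{3,5,7\}$ and obtain $\Phi_{b/q^{e_q}}(x) \mid Q_t(x)$. Since the two unstripped primes still divide $b/q^{e_q}$, this reduced modulus is at least $3 \cdot 5 = 15 \ge 3$ yet strictly smaller than $b$, contradicting minimality. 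The main obstacle I anticipate is not any individual computation but verifying that the reduction is genuinely exhaustive, and in particular that one cannot simply strip all the way down: because $\Phi_1(x) = x-1$ and $\Phi_2(x) = x+1$ actually divide $Q_t(x)$, reaching $b \in \{1,2\}$ would be no contradiction at all, which is exactly why the leaves of the recursion must be the cases $b = p,\,2p$ handled by Lemma \ref{p_2p_1} and the $\{2,3,5,7\}$-moduli missing one of $3,5,7$ handled by Lemma \ref{concrete_b_1} — with $105 \mid b$ being the one composite obstruction that Filaseta's theorem can still break.
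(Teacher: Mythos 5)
Your proposal is correct and follows essentially the same route as the paper: both arguments rest on Theorem \ref{filaseta} together with Lemmas \ref{weird_lemma_2}, \ref{p_2p_1} and \ref{concrete_b_1}, and both reduce $b$ to either the $p$/$2p$ leaves or to a $\{2,3,5,7\}$-modulus missing one of $3,5,7$. The only cosmetic difference is that you phrase the descent as a minimal-counterexample argument where the paper phrases it as repeatedly cancelling the large prime factors; the logic is identical.
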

\begin{proof}
    Suppose that $\Phi_b(x) \mid Q_t(x)$ for some even $t \ge 6$ and some $b \ge 3$. We now divide the problem into two cases depending on whether $b$ is divisible by some prime number from the set $\{3, 5, 7\}$.

    \bigskip\noindent
    \emph{Case $3 \nmid b \land 5 \nmid b \land 7 \nmid b$}.\quad
    In this case, it is clear that $b$ has at least one prime factor greater than $7$, since $b \notin \{1, 2\}$ and $2^2 \nmid b$, according to Lemma \ref{weird_lemma_2}. Now, since $Q_t(x)$ has exactly $10$ non-zero terms, and $p - 2 \ge 10 - 2$ for any prime $p \ge 11$, we are able to repeatedly apply Theorem \ref{filaseta} in order to cancel out any additional prime divisor of $b$ greater than $7$, until exactly one is left. This leads us to
    \[
        \Phi_{b'}(x) \mid Q_t(x) ,
    \]
    where $b'$ has a single prime divisor greater than $7$, and is potentially divisible by two as well. Taking into consideration Lemma \ref{weird_lemma_2}, we conclude that $b'$ must either be equal to some prime $p \ge 11$ or have the form $2p$, where $p \ge 11$ is a prime number. Either way, Lemma \ref{p_2p_1} tells us that such a $\Phi_{b'}(x)$ cannot possibly divide $Q_t(x)$, hence we obtain a contradiction.

    \bigskip\noindent
    \emph{Case $3 \mid b \lor 5 \mid b \lor 7 \mid b$}.\quad
    In this scenario, we can apply Theorem \ref{filaseta} in a similar fashion in order to cancel out any potential prime divisor of $b$ greater than $7$ until we reach
    \[
        \Phi_{b'}(x) \mid Q_t(x) ,
    \]
    where $b' \ge 3$ is such that all of its prime factors belong to the set $\{2, 3, 5, 7\}$, and $2^2 \nmid b'$, $3^4 \nmid b'$, $5^3 \nmid b'$, $7^3 \nmid b'$, by virtue of Lemma \ref{weird_lemma_2}. Furthermore, we know that $(3-2)+(5-2)+(7-2)>10-2$, hence we can suppose without loss of generality that $b'$ is not divisible by at least one prime number from the set $\{3, 5, 7\}$, in accordance with Theorem \ref{filaseta}. However, Lemma \ref{concrete_b_1} dictates that such a $\Phi_{b'}(x)$ cannot divide $Q_t(x)$, yielding a contradiction.
\end{proof}

\begin{lemma}\label{rt_cyclotomic}
    For each even $t \ge 6$, $R_t(x)$ is not divisible by any cyclotomic polynomial $\Phi_b(x)$ where $b \ge 3$.
\end{lemma}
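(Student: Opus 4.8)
The plan is to mirror the structure of the proof of Lemma \ref{qt_cyclotomic} exactly, since the preliminary lemmas have been set up symmetrically for $Q_t(x)$ and $R_t(x)$. I would first suppose for contradiction that $\Phi_b(x) \mid R_t(x)$ for some even $t \ge 6$ and some $b \ge 3$, and then split into cases according to which primes from the relevant set divide $b$. The key arithmetic difference from the $Q_t(x)$ argument is that $R_t(x)$ has exactly $12$ non-zero terms rather than $10$, so the threshold in Theorem \ref{filaseta} becomes $N - 2 = 10$. This means $p - 2 \ge 12 - 2 = 10$ requires $p \ge 13$ rather than $p \ge 11$, which is precisely why Lemma \ref{p_2p_1} treats the prime $11$ differently for $R_t(x)$ (requiring $p \ge 13$) and why Lemma \ref{concrete_b_1} must admit $11$ into the allowed prime set for $R_t(x)$. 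I would keep careful track of these shifted bounds throughout.

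First I would handle the case where $b$ has no prime factor in the set $\{3, 5, 7, 11\}$. Since $b \ge 3$, $b \notin \{1, 2\}$, and $2^2 \nmid b$ by Lemma \ref{weird_lemma_2}, such a $b$ must have at least one prime factor $p \ge 13$. Because $p - 2 \ge 10$ for every prime $p \ge 13$, Theorem \ref{filaseta} can be applied repeatedly to strip away all but one of these large prime divisors, reducing to $\Phi_{b'}(x) \mid R_t(x)$ where $b'$ is either a single prime $p \ge 13$ or of the form $2p$ with $p \ge 13$ (using Lemma \ref{weird_lemma_2} to rule out higher prime powers and $4 \mid b'$). Lemma \ref{p_2p_1} then delivers the contradiction directly.

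In the complementary case, where $b$ is divisible by some prime from $\{3, 5, 7, 11\}$, I would again use Theorem \ref{filaseta} to cancel every prime factor exceeding $11$, arriving at $\Phi_{b'}(x) \mid R_t(x)$ with all prime factors of $b'$ lying in $\{2, 3, 5, 7, 11\}$ and with the exponent restrictions $2^2 \nmid b'$, $3^3 \nmid b'$, $5^3 \nmid b'$, $7^2 \nmid b'$, $11^2 \nmid b'$ supplied by Lemma \ref{weird_lemma_2}. The main subtlety here is that to invoke Lemma \ref{concrete_b_1} I must arrange that $b'$ omits both primes of the set $\{7, 11\}$ or both primes of $\{5, 11\}$. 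I would verify that the relevant Filaseta sums permit this reduction: the prime $11$ is expensive in Theorem \ref{filaseta} ($11 - 2 = 9 < 10$ on its own), so the cancellation must be orchestrated together with a small prime. Concretely, since $(5-2)+(7-2)=8$ falls just short but $(3-2)+(5-2)+(7-2)=9$ and combinations involving $11$ need checking against $N - 2 = 10$, I expect the decisive computation to be confirming that one can always eliminate enough of $\{5, 7, 11\}$ to satisfy the hypotheses of Lemma \ref{concrete_b_1}. This bookkeeping around the prime $11$ is the main obstacle; once the reduction is secured, Lemma \ref{concrete_b_1} closes the argument and yields the contradiction.
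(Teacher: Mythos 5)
Your proposal follows the paper's proof essentially verbatim: the same contradiction setup, the same two-case split on whether $b$ has a prime factor in $\{3,5,7,11\}$, the same use of Theorem \ref{filaseta} with the threshold $N-2=10$ forcing $p \ge 13$ in the first case (closed by Lemma \ref{p_2p_1}) and the pair sums $(7-2)+(11-2)>10$ and $(5-2)+(11-2)>10$ in the second (closed by Lemma \ref{concrete_b_1}). One small caution: the reduction only lets you assume $b'$ lacks \emph{at least one} prime from each of $\{7,11\}$ and $\{5,11\}$ (which is exactly what Lemma \ref{concrete_b_1} requires), not that it omits both primes of either pair, as your phrasing momentarily suggests.
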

\begin{proof}
    Suppose that $\Phi_b(x) \mid R_t(x)$ for some even $t \ge 6$ and some $b \ge 3$. We proceed by dividing the problem into two cases depending on whether $b$ is divisible by some prime number from $\{3, 5, 7, 11\}$.

    \bigskip\noindent
    \emph{Case $3 \nmid b \land 5 \nmid b \land 7 \nmid b \land 11 \nmid b$}.\quad
    Due to the fact that $b \not\in \{1, 2\}$ and $2^2 \nmid b$, by virtue of Lemma \ref{weird_lemma_2}, we deduce that $b$ has at least one prime factor greater than $11$. Since $R_t(x)$ has exactly $12$ non-zero terms and $p - 2 > 12 - 2$ for any prime $p \ge 13$, we can implement Theorem \ref{filaseta} and Lemma \ref{weird_lemma_2} in an analogous fashion as in Lemma \ref{qt_cyclotomic} in order to reach
    \[
        \Phi_{b'}(x) \mid R_t(x) ,
    \]
    for some $b'$ that is either equal to a prime $p \ge 13$ or has the form $2p$, for some prime number $p \ge 13$. Once again, Lemma \ref{p_2p_1} tells us that such a divisibility cannot hold, leading to a contradiction.

    \bigskip\noindent
    \emph{Case $3 \mid b \lor 5 \mid b \lor 7 \mid b \lor 11 \mid b$}.\quad
    In this case, we apply Theorem \ref{filaseta} once more in order to cancel out any potential prime divisor of $b$ greater than $11$ until we get
    \[
        \Phi_{b'}(x) \mid R_t(x),
    \]
    where $b' \ge 3$ is such that all of its prime factors belong to the set $\{2, 3, 5, 7, 11\}$, and $2^2 \nmid b'$, $3^3 \nmid b'$, $5^3 \nmid b'$, $7^2 \nmid b'$, $11^2 \nmid b'$ due to Lemma \ref{weird_lemma_2}. Besides that, it is clear that $(7-2) + (11-2) > 12 - 2$ and $(5-2) + (11-2) > 12 - 2$, which means that it is safe to suppose that $b'$ is not divisible by both elements from $\{7, 11\}$ or from $\{5, 11\}$. Bearing this in mind, it is easy to reach a contradiction by taking into consideration Lemma \ref{concrete_b_1}.
\end{proof}

Finally, we are able to put all the pieces of the puzzle together and complete the proof of Theorem \ref{main_th_1}.

\bigskip\noindent
\emph{Proof of Theorem \ref{main_th_1}}.\quad
From Eq.\ (\ref{polynomial_formula}) we immediately obtain
\[
    P(\zeta) = \left( \zeta^{\frac{n}{4}} + \frac{1}{\zeta^{\frac{n}{4}}} \right) + \left( \zeta^{\frac{n}{4}+2} + \frac{1}{\zeta^{\frac{n}{4}+2}} \right) + \sum_{\substack{j=1,\\ j \neq t-2}}^{t}\left( \zeta^j + \frac{1}{\zeta^j}\right) + \sum_{\substack{j=\frac{n}{2}-t,\\ j \neq \frac{n}{2} - t+2}}^{\frac{n}{2}-1}\left( \zeta^j + \frac{1}{\zeta^j}\right),
\]
where $\zeta$ is an arbitrarily chosen $n$-th root of unity different from $1$ and $-1$. It is easy to further conclude that
\begin{equation}\label{aux_5}
    P(\zeta) = \left( \zeta^{\frac{n}{4}} + \frac{1}{\zeta^{\frac{n}{4}}} \right) + \left( \zeta^{\frac{n}{4}+2} + \frac{1}{\zeta^{\frac{n}{4}+2}} \right) + \sum_{\substack{j=1,\\ j \neq t-2}}^{t}\left( \zeta^j + \frac{1}{\zeta^j} + \zeta^{\frac{n}{2} - j} + \frac{1}{\zeta^{\frac{n}{2}-j}} \right) .
\end{equation}
We will finish the proof by showing that $P(\zeta) \neq 0$ must necessarily hold. For the purpose of making the proof easier to follow, we shall divide it into two cases depending on whether $\zeta^\frac{n}{2}$ is equal to $1$ or $-1$.

\bigskip\noindent
\emph{Case $\zeta^\frac{n}{2} = -1$}.\quad
It is straightforward to see that $\zeta^{\frac{n}{2} - j} = -\dfrac{1}{\zeta^j}$ and $\dfrac{1}{\zeta^{\frac{n}{2} - j}} = -\zeta^j$, which swiftly gives
\[
    \zeta^j + \frac{1}{\zeta^j} + \zeta^{\frac{n}{2} - j} + \frac{1}{\zeta^{\frac{n}{2}-j}} = 0
\]
for any $j = \overline{1, t}$. Thus, Eq.\ (\ref{aux_5}) simplifies to
\[
    P(\zeta) = \zeta^{\frac{n}{4}+2} + \zeta^{\frac{n}{4}} + \frac{1}{\zeta^{\frac{n}{4}}} + \frac{1}{\zeta^{\frac{n}{4}+2}} .
\]
The condition $P(\zeta) = 0$ now becomes equivalent to
\allowdisplaybreaks
\begin{alignat*}{2}
    && P(\zeta) &= 0\\
    \iff \quad && \zeta^{\frac{n}{4}+2} \left( \zeta^{\frac{n}{4}+2} + \zeta^{\frac{n}{4}} + \frac{1}{\zeta^{\frac{n}{4}}} + \frac{1}{\zeta^{\frac{n}{4}+2}} \right) &=0\\
    \iff \quad && \zeta^{\frac{n}{2} + 4} + \zeta^{\frac{n}{2} + 2} + \zeta^2 + 1 &= 0\\
    \iff \quad && -\zeta^4 - \zeta^2 + \zeta^2 + 1 &= 0\\
    \iff \quad && \zeta^4 &= 1 .
\end{alignat*}
However, due to the fact that $4 \mid \frac{n}{2}$, it is evident that each fourth root of unity among $\zeta$ cannot satisfy $\zeta^\frac{n}{2} = -1$. Thus, provided $\zeta^\frac{n}{2} = -1$, $\zeta^4 \neq 1$ cannot be true, from which we immediately obtain $P(\zeta) \neq 0$, as desired.

\bigskip\noindent
\emph{Case $\zeta^\frac{n}{2} = 1$}.\quad
Here, we swiftly obtain $\zeta^{\frac{n}{2} - j} = \dfrac{1}{\zeta^j}$ and $\dfrac{1}{\zeta^{\frac{n}{2} - j}} = \zeta^j$. This immediately implies
\[
    \zeta^j + \frac{1}{\zeta^j} + \zeta^{\frac{n}{2} - j} + \frac{1}{\zeta^{\frac{n}{2}-j}} = 2 \left( \zeta^j + \frac{1}{\zeta^j} \right)
\]
for any $j = \overline{1, t}$. By applying Eq.\ (\ref{aux_5}), it is now clear that $P(\zeta) = 0$ is equivalent to
\begin{alignat*}{2}
    && P(\zeta) &= 0\\
    \iff \quad && \left( \zeta^{\frac{n}{4}+2} + \zeta^{\frac{n}{4}} + \frac{1}{\zeta^{\frac{n}{4}}} + \frac{1}{\zeta^{\frac{n}{4}+2}} \right) + 2 \sum_{\substack{j=1,\\ j \neq t-2}}^{t}\left( \zeta^j + \frac{1}{\zeta^j} \right) &= 0\\
    \iff \quad && \left( \zeta^{\frac{n}{4}+2} + \zeta^{\frac{n}{4}} + \frac{1}{\zeta^{\frac{n}{4}}} + \frac{1}{\zeta^{\frac{n}{4}+2}} \right) - 2 - 2 \zeta^{t-2} - \frac{2}{\zeta^{t-2}} + 2 \sum_{j=-t}^{t} \zeta^j &= 0\\
    \iff \quad && \zeta^{t} \left( \zeta^{\frac{n}{4}+2} + \zeta^{\frac{n}{4}} + \frac{1}{\zeta^{\frac{n}{4}}} + \frac{1}{\zeta^{\frac{n}{4}+2}} - 2 - 2 \zeta^{t-2} - \frac{2}{\zeta^{t-2}} + 2 \sum_{j=-t}^{t} \zeta^j \right) &= 0\\
    \iff \quad && \zeta^{t+\frac{n}{4}+2} + \zeta^{t+\frac{n}{4}} + \zeta^{t-\frac{n}{4}} + \zeta^{t-\frac{n}{4}-2} - 2\zeta^{t} - 2\zeta^{2t-2} - 2\zeta^2 + 2 \sum_{j=0}^{2t} \zeta^j &= 0 .
\end{alignat*}
Given the fact that
\begin{align*}
    (\zeta &- 1) \left( \zeta^{t+\frac{n}{4}+2} + \zeta^{t+\frac{n}{4}} + \zeta^{t-\frac{n}{4}} + \zeta^{t-\frac{n}{4}-2} - 2\zeta^{t} - 2\zeta^{2t-2} - 2\zeta^2 + 2 \sum_{j=0}^{2t} \zeta^j \right) =\\
    &= \zeta^{t + \frac{n}{4} + 3} - \zeta^{t + \frac{n}{4} + 2} + \zeta^{t + \frac{n}{4} + 1} - \zeta^{t + \frac{n}{4}} + \zeta^{t - \frac{n}{4} + 1} - \zeta^{t - \frac{n}{4}} + \zeta^{t - \frac{n}{4} - 1} - \zeta^{t - \frac{n}{4} - 2}\\
    &\qquad - 2\zeta^{t+1} + 2\zeta^t - 2\zeta^{2t-1} + 2\zeta^{2t-2} - 2\zeta^3 + 2\zeta^2 + 2\zeta^{2t+1} - 2\\
    &= \zeta^{t + \frac{n}{4} + 3} - \zeta^{t + \frac{n}{4} + 2} + 2\zeta^{t + \frac{n}{4} + 1} - 2\zeta^{t + \frac{n}{4}} + \zeta^{t + \frac{n}{4} - 1} - \zeta^{t + \frac{n}{4} - 2}\\
    &\qquad + 2\zeta^{2t+1} - 2\zeta^{2t-1} + 2\zeta^{2t-2} - 2\zeta^{t+1} + 2\zeta^t - 2\zeta^3 + 2\zeta^2 - 2,
\end{align*}
it is straightforward to see that $P(\zeta) = 0$ is further equivalent to
\begin{align}\label{aux_6}
    \begin{split}
    &\zeta^{t + \frac{n}{4} + 3} - \zeta^{t + \frac{n}{4} + 2} + 2\zeta^{t + \frac{n}{4} + 1} - 2\zeta^{t + \frac{n}{4}} + \zeta^{t + \frac{n}{4} - 1} - \zeta^{t + \frac{n}{4} - 2}\\
    &\qquad + 2\zeta^{2t+1} - 2\zeta^{2t-1} + 2\zeta^{2t-2} - 2\zeta^{t+1} + 2\zeta^t - 2\zeta^3 + 2\zeta^2 - 2 = 0 .
    \end{split}
\end{align}
We now divide the problem into two separate subcases depending on whether $\zeta^\frac{n}{4}$ is equal to $1$ or $-1$.

\medskip\noindent
\emph{Subcase $\zeta^{\frac{n}{4}} = 1$}.\quad
In this subcase, it can be easily noticed from Eq.\ (\ref{aux_6}) that $P(\zeta) = 0$ is equivalent to
\begin{align*}
    &\zeta^{t + 3} - \zeta^{t + 2} + 2\zeta^{t + 1} - 2\zeta^{t} + \zeta^{t - 1} - \zeta^{t - 2}\\
    &\qquad + 2\zeta^{2t+1} - 2\zeta^{2t-1} + 2\zeta^{2t-2} - 2\zeta^{t+1} + 2\zeta^t - 2\zeta^3 + 2\zeta^2 - 2 = 0 ,
\end{align*}
that is
\begin{equation}\label{aux_7}
    2\zeta^{2t+1} - 2\zeta^{2t-1} + 2\zeta^{2t-2} + \zeta^{t + 3} - \zeta^{t + 2} + \zeta^{t - 1} - \zeta^{t - 2} - 2\zeta^3 + 2\zeta^2 - 2 = 0 .
\end{equation}
Suppose that $P(\zeta) = 0$ does hold for some $n$-th root of unity $\zeta$ different from $1$ and $-1$. For $t = 4$, Eq.\ (\ref{aux_7}) simplifies to
\begin{alignat*}{2}
    && 2\zeta^9 - \zeta^7 + \zeta^6 - \zeta^3 + \zeta^2 - 2 &= 0\\
    \iff \quad && (\zeta - 1)(\zeta + 1)^2 (2\zeta^6 - 2\zeta^5 + 3\zeta^4 - 2\zeta^3 + 3\zeta^2 - 2\zeta + 2) &= 0\\
    \iff \quad && 2\zeta^6 - 2\zeta^5 + 3\zeta^4 - 2\zeta^3 + 3\zeta^2 - 2\zeta + 2 &= 0 .
\end{alignat*}
\newpage\noindent
However, the polynomial $2x^6 - 2x^5 + 3x^4 - 2x^3 + 3x^2 - 2x + 2 \in \mathbb{Q}[x]$ has no roots of unity among its roots, as shown in Appendix \ref{problematic_roots}. Thus, we reach a contradiction. On the other hand, if $t \ge 6$, then Eq.\ (\ref{aux_7}) is equivalent to $Q_t(\zeta) = 0$, which immediately implies that the polynomial $Q_t(x)$ must be divisible by a cyclotomic polynomial $\Phi_b(x)$ where $b \ge 3$. However, by virtue of Lemma \ref{qt_cyclotomic}, this is not possible, yielding a contradiction once more.

\medskip\noindent
\emph{Subcase $\zeta^{\frac{n}{4}} = -1$}.\quad
Here, implementing Eq.\ (\ref{aux_6}) means that $P(\zeta) = 0$ is equivalent to
\begin{align*}
    &-\zeta^{t + 3} + \zeta^{t + 2} - 2\zeta^{t + 1} + 2\zeta^{t} - \zeta^{t - 1} + \zeta^{t - 2}\\
    &\qquad + 2\zeta^{2t+1} - 2\zeta^{2t-1} + 2\zeta^{2t-2} - 2\zeta^{t+1} + 2\zeta^t - 2\zeta^3 + 2\zeta^2 - 2 = 0 ,
\end{align*}
that is
\begin{align}\label{aux_8}
    \begin{split}
    &2\zeta^{2t+1} - 2\zeta^{2t-1} + 2\zeta^{2t-2} - \zeta^{t + 3} + \zeta^{t + 2}\\
    &\qquad - 4\zeta^{t+1} + 4\zeta^t - \zeta^{t - 1} + \zeta^{t - 2} - 2\zeta^3 + 2\zeta^2 - 2 = 0 .
    \end{split}
\end{align}
Now, suppose that $P(\zeta) = 0$ is true for some $n$-th root of unity $\zeta \neq 1, -1$. If $t = 4$, then Eq.\ (\ref{aux_8}) transforms to
\begin{alignat*}{2}
    && 2\zeta^9 - 3\zeta^7 + 3\zeta^6 - 4\zeta^5 + 4\zeta^4 - 3\zeta^3 + 3\zeta^2 - 2 &= 0\\
    \iff \quad && (\zeta - 1)(2\zeta^8 + 2\zeta^7 - \zeta^6 + 2\zeta^5 - 2\zeta^4 + 2\zeta^3 - \zeta^2 + 2\zeta + 2) &= 0\\
    \iff \quad && 2\zeta^8 + 2\zeta^7 - \zeta^6 + 2\zeta^5 - 2\zeta^4 + 2\zeta^3 - \zeta^2 + 2\zeta + 2 &= 0 .
\end{alignat*}
However, the polynomial $2x^8 + 2x^7 - x^6 + 2x^5 - 2x^4 + 2x^3 - x^2 + 2x + 2 \in \mathbb{Q}[x]$ has no roots of unity among its roots, as demonstrated in Appendix \ref{problematic_roots}, hence $P(\zeta) = 0$ leads to a contradiction, as desired. On the other hand, whenever $t \ge 6$, Eq.\ (\ref{aux_8}) becomes equivalent to $R_t(\zeta) = 0$, which further implies that $R_t(x)$ is divisible by some cyclotomic polynomial $\Phi_b(x)$ where $b \ge 3$. However, Lemma \ref{rt_cyclotomic} dictates that this is impossible, hence we reach a contradiction yet again. \hfill\qed

\section{Construction for \texorpdfstring{$n \equiv_8 4 \land n \ge 4t + 12$}{n mod 8 = 4 and n >= 4t + 12}}\label{section_5}

In this section we shall give a constructive proof of the existence of a $4t$-regular circulant nut graph of any order $n \in \mathbb{N}$ such that $n \ge 4t + 12$ and $n \equiv_8 4$, for any even $t \ge 4$. The proof will be given in the form of the next theorem.

\begin{theorem}\label{main_th_2}
    For any even $t \ge 4$ and any $n \ge 4t + 12$ such that $n \equiv_8 4$, the circulant graph $\mathrm{Circ}(n, S''_{t, n})$ where
    \[
        S''_{t, n} = \{1, 2, \ldots, t-1 \} \cup \left\{ \frac{n}{4} - 1, \frac{n}{4} + 3 \right\} \cup \left\{\frac{n}{2} - (t-1), \ldots, \frac{n}{2} - 2, \frac{n}{2} -1 \right\}
    \]
    must be a $4t$-regular circulant nut graph of order $n$.
\end{theorem}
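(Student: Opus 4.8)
The plan is to mirror, case for case, the machinery developed in Section~\ref{section_4}. First I would record that $S''_{t,n}$ is well defined and admissible: since $n\equiv_8 4$ forces $\frac n4$ to be odd, both $\frac n4-1$ and $\frac n4+3$ are even, and a quick count gives exactly $t$ odd and $t$ even generators, all lying in $\{1,\dots,\frac n2-1\}$ because $t<\frac n4$ and $\frac n4+3<\frac n2-(t-1)$ hold once $n\ge 4t+12$. By Lemma~\ref{damnjanovic_lemma_1} it then suffices to show $P(\zeta)\neq 0$ for every $n$-th root of unity $\zeta\neq\pm1$. Using Eq.~(\ref{polynomial_formula}) and pairing the two consecutive blocks, I would write
\[
    P(\zeta)=\sum_{j=1}^{t-1}\left(\zeta^{j}+\zeta^{-j}+\zeta^{\frac n2-j}+\zeta^{-\frac n2+j}\right)+\left(\zeta^{\frac n4-1}+\zeta^{-\frac n4+1}\right)+\left(\zeta^{\frac n4+3}+\zeta^{-\frac n4-3}\right),
\]
and split on $\zeta^{\frac n2}$. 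When $\zeta^{\frac n2}=-1$ every summand of the first sum cancels, and multiplying the four remaining terms by $\zeta^{\frac n4+3}$ collapses $P(\zeta)=0$ to $(\zeta^2-1)(\zeta^4+1)=0$; as $\zeta\neq\pm1$ and $\zeta^4=-1$ would make $\zeta$ a primitive eighth root of unity (impossible since $8\nmid n$), this case is immediate.

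When $\zeta^{\frac n2}=1$ the first sum doubles and I would further split on $\zeta^{\frac n4}\in\{1,-1\}$. In each subcase the four central terms simplify to $\pm(\zeta+\zeta^{-1}+\zeta^3+\zeta^{-3})$, and after multiplying by $(\zeta-1)\zeta^{t-1}$ to clear denominators and telescope the geometric sum, the condition $P(\zeta)=0$ becomes, for $t\ge 6$, the vanishing of one of the two ten-term integer polynomials
\[
    \widetilde{Q}_t(x)=2x^{2t-1}+x^{t+3}-x^{t+2}+x^{t+1}-3x^{t}+3x^{t-1}-x^{t-2}+x^{t-3}-x^{t-4}-2
\]
(in the subcase $\zeta^{\frac n4}=1$) and
\[
    \widetilde{R}_t(x)=2x^{2t-1}-x^{t+3}+x^{t+2}-x^{t+1}-x^{t}+x^{t-1}+x^{t-2}-x^{t-3}+x^{t-4}-2
\]
(in the subcase $\zeta^{\frac n4}=-1$). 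The degenerate value $t=4$ (where $2t-1=t+3$) must be handled separately: there both polynomials split explicitly, e.g.\ $\widetilde{R}_4(x)=(x-1)(x+1)^2(x^4+1)$ and $\widetilde{Q}_4(x)=(x-1)(x^4+1)(3x^2+2x+3)$.

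The decisive — and most delicate — point is that, in contrast to the "no root of unity" statements of Section~\ref{section_4}, the polynomials $\widetilde{Q}_t,\widetilde{R}_t$ genuinely carry some roots of unity (e.g.\ $\Phi_8\mid\widetilde{Q}_4$). What rescues the argument is the order of $\zeta$: in the subcase $\zeta^{\frac n4}=1$ the root $\zeta$ has odd order dividing $\frac n4$, whereas in the subcase $\zeta^{\frac n4}=-1$ its order is $\equiv_4 2$. Consequently it is enough to prove that $\Phi_b(x)\nmid\widetilde{Q}_t(x)$ for every odd $b\ge 3$ and that $\Phi_b(x)\nmid\widetilde{R}_t(x)$ for every $b\equiv_4 2$ with $b\ge 6$ (the latter reducing to odd moduli through $\Phi_{2m}(x)=\Phi_m(-x)$). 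For $t=4$ this is immediate from the factorizations above, since the cyclotomic factors $\Phi_1,\Phi_2,\Phi_8$ correspond precisely to the excluded orders and $3x^2+2x+3$ has no roots of unity. Restricting to odd $b$ is a real simplification, as it removes the whole "$4\mid b$" analysis that was the hardest part of Lemma~\ref{weird_lemma_2}.

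To secure the non-divisibility for $t\ge 6$, I would replay the chain of Section~\ref{section_4} for the common exponent set $L_t=\{0,t-4,t-3,t-2,t-1,t,t+1,t+2,t+3,2t-1\}$: a unique-remainder lemma (analogous to Lemma~\ref{unique_remainder_1}) showing $L_t$ has an element with unique residue modulo $\beta$ for all odd $\beta$ past a small bound; then, via $\Phi_b(x)=\Phi_{b/p}(x^p)$ and the analogue of Lemma~\ref{weird_lemma_1}, the exclusion of large prime-power divisors of $b$; then the analogue of Lemma~\ref{p_2p_1} ruling out $\Phi_p$ for odd primes $p\ge 11$ (with no $\Phi_{2p}$ to treat, since $b$ is odd); and finally, because each of $\widetilde{Q}_t,\widetilde{R}_t$ has only ten nonzero terms and $(3-2)+(5-2)+(7-2)>10-2$, an application of Theorem~\ref{filaseta} reducing the problem to finitely many $b$ with prime factors in $\{3,5,7\}$, dispatched by a finite computer verification as in Appendices~\ref{qt_inspection}–\ref{rt_inspection}, with the one-off residual polynomials (such as $3x^2+2x+3$) relegated to Appendix~\ref{problematic_roots}. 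I expect the main obstacle to be the bookkeeping of that unique-remainder lemma, in particular the edge cases in which $0$ or $2t-1$ collides modulo $\beta$ with the central run $t-4,\dots,t+3$, exactly the kind of case split carried out in Lemma~\ref{unique_remainder_1}.
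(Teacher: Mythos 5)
Your proposal is correct and follows the same overall architecture as the paper: the same reduction of $P(\zeta)=0$ to the vanishing of two lacunary polynomials (your $\widetilde Q_t,\widetilde R_t$ are exactly the paper's $U_t,W_t$), the same unique-remainder lemma for the exponent set $\{0,t-4,\dots,t+3,2t-1\}$, the same use of Theorem~\ref{filaseta} to strip away large prime divisors, and the same finite computer verification at the end. Two points genuinely differ. First, where the paper proves the uniform statement that $\Phi_b(x)$ divides neither $U_t(x)$ nor $W_t(x)$ for every $b\notin\{1,2,4,8\}$ --- which forces it, in Lemma~\ref{weird_lemma_4}, to analyse the case $4\mid b$ by exhibiting explicit combinations $A(x)\,C(x)+B(x)\,D(x)$ that reduce to $(x^2+1)^4(x^4+1)$ and $(x^2+1)^2(x^4+1)$ --- you observe up front that $n\equiv_8 4$ makes $\frac n4$ odd, so in the subcase $\zeta^{\frac n4}=1$ the order of $\zeta$ is odd and in the subcase $\zeta^{\frac n4}=-1$ it is $\equiv_4 2$; this restricts the relevant moduli to $4\nmid b$ and deletes the entire $4\mid b$ analysis, which is the most laborious part of the paper's Lemma~\ref{weird_lemma_4}. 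Second, the paper folds $t=4$ into the general lemmas (noting only that $U_4$ and $W_4$ have eight rather than ten nonzero terms), whereas you dispatch it by the explicit factorizations $U_4(x)=(x-1)(x^4+1)(3x^2+2x+3)$ and $W_4(x)=(x-1)(x+1)^2(x^4+1)$, which I have checked are correct and whose cyclotomic factors $\Phi_1,\Phi_2,\Phi_8$ lie precisely in the set of excluded orders (note that $3x^2+2x+3$ has both roots on the unit circle, so one really does need the comparison against the low-degree cyclotomics rather than a modulus argument). Either treatment works; yours trades a little extra casework at $t=4$ for a genuinely lighter divisibility lemma.
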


We will show that Theorem \ref{main_th_2} holds by using a similar strategy as with Theorem \ref{main_th_1}. To begin with, it can be easily deduced that the set $S''_{t, n}$ is well defined, since $t -  1 < \frac{n}{4} - 1$ and $\frac{n}{4} + 3 < \frac{n}{2} - (t-1)$ for each even $t \ge 4$ and each $n \ge 4t + 12$ such that $n \equiv_8 4$. Besides that, the set $S''_{t, n}$ certainly contains equally many odd and even integers, all of which are positive and smaller than $\frac{n}{2}$. This means that, by implementing Lemma \ref{damnjanovic_lemma_1}, we can prove Theorem \ref{main_th_2} if we simply show that $P(x)$ has no $n$-th roots of unity among its roots, except potentially $1$ or $-1$.

To begin with, we shall define the following two polynomials
\begin{align*}
    U_t(x) &= 2x^{2t-1} + x^{t+3} - x^{t+2} + x^{t+1} - 3x^t + 3x^{t-1} - x^{t-2} + x^{t-3} - x^{t-4} - 2,\\
    W_t(x) &= 2x^{2t-1} - x^{t+3} + x^{t+2} - x^{t+1} - x^t + x^{t-1} + x^{t-2} - x^{t-3} + x^{t-4} - 2,
\end{align*}
for each even $t \ge 4$. Now, for $t \ge 6$ we have $2t-1 > t+3$ and $t-4 > 0$, while the equalities $2t - 1 = t+3$ and $t - 4 = 0$ hold for $t = 4$. This means that the polynomials $U_t(x)$ and $W_t(x)$ have exactly $10$ non-zero terms for any even $t \ge 6$, and $8$ non-zero terms in case $t = 4$. We will use $M_t$ to denote the set containing the powers of these terms, i.e.\
\[
    M_t = \{ 0, t-4, t-3, t-2, t-1, t, t+1, t+2, t+3, 2t-1 \} ,
\]
for each even $t \ge 4$. We are now able to present the following lemma that demonstrates a property of $M_t$ similar to the one displayed in Lemma \ref{unique_remainder_1} regarding the sets $L'_t$ and $L''_t$.
\begin{lemma}\label{unique_remainder_2}
    For each even $t \ge 4$ and each $\beta \in \mathbb{N},\, \beta \ge 6$, $M_t$ must contain an element whose remainder modulo $\beta$ is unique within the set.
\end{lemma}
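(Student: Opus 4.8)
The plan is to exploit the very rigid structure of $M_t$: apart from the two ``outliers'' $0$ and $2t-1$, the remaining eight elements $t-4, t-3, \ldots, t+3$ form a block of consecutive integers. For $t \ge 6$ all ten listed values are genuinely distinct, since $t-4 \ge 2$ and $2t-1 > t+3$; the degenerate case $t = 4$, where $M_4 = \{0,1,\ldots,7\}$ is itself a block of eight consecutive integers with no separate outliers, is subsumed by the same argument. Rather than tabulating $t \bmod \beta$ as was done for $L'_t$ and $L''_t$ in Lemma \ref{unique_remainder_1}, I would phrase the whole proof as a counting argument on residues modulo $\beta$.

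First I would dispose of the generic case $\beta \ge 8$. Here the eight consecutive integers $t-4, \ldots, t+3$ span a range of $7 < \beta$, so they are pairwise incongruent modulo $\beta$. Each of the two outliers $0$ and $2t-1$ is a single residue, and since the eight block residues are distinct, each outlier can coincide with at most one of them. Thus at most two of the eight block elements can fail to be uniquely represented, leaving at least six elements of $M_t$ whose remainder modulo $\beta$ is unique within the set --- far more than the single one required.

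The remaining work, and the only genuinely delicate part, is the two small moduli $\beta = 6$ and $\beta = 7$, where the block of consecutive integers is no longer injective modulo $\beta$. I would handle these by refining the previous count. Call a block element \emph{locally unique} if its residue is shared by no other block element. For $\beta = 7$ the only coincidence among $t-4, \ldots, t+3$ is $t-4 \equiv t+3$, so six block elements are locally unique; for $\beta = 6$ the coincidences are $t-4 \equiv t+2$ and $t-3 \equiv t+3$, leaving the four elements $t-2, t-1, t, t+1$ locally unique. In either case locally unique block elements have pairwise distinct residues, so each outlier can spoil at most one of them, and at least $6-2 = 4$ (respectively $4-2 = 2$) survive as elements of $M_t$ with a globally unique remainder.

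Putting the three cases together yields the claim. The main obstacle is purely the bookkeeping for $\beta \in \{6,7\}$: one must verify exactly which pairs among the eight consecutive integers collide modulo these small moduli, and then confirm that enough locally unique elements remain after deleting the (at most two) that the outliers can destroy. If this counting felt error-prone, the safe fallback --- mirroring the proof of Lemma \ref{unique_remainder_1} --- would be to set $r = t \bmod \beta$ and tabulate the residues of all of $M_t$ for each of the finitely many values of $r$, reading off a uniquely represented element in every column.
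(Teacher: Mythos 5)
Your proposal is correct and rests on the same pigeonhole idea as the paper's proof: a run of consecutive integers is pairwise incongruent modulo $\beta$, each remaining element of $M_t$ can collide with at most one of them, and counting shows that uniquely-represented elements survive. The only difference is bookkeeping: the paper works with the six middle elements $t-3,\ldots,t+2$, which are pairwise incongruent for \emph{every} $\beta\ge 6$, and treats all four of $0,\,t-4,\,t+3,\,2t-1$ as potential spoilers (so at least $6-4=2$ survive), which avoids the case split on $\beta\in\{6,7\}$ that your eight-element block forces.
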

\begin{proof}
    It is clear that the six consecutive integers $t-3, t-2, t-1, t, t+1, t+2$ must all have mutually distinct remainders modulo $\beta$ for any $\beta \ge 6$. Regardless of whether $t = 4$ or $t \ge 6$, it is easy to establish that at least two of these integers must have a distinct remainder modulo $\beta$ from all the elements of the set $\{ 0, t-4, t+3, 2t - 1\}$. Hence, these integers must have a unique remainder modulo $\beta$ within $M_t$ and the lemma statement follows swiftly from here.
\end{proof}

By relying on Lemma \ref{unique_remainder_2}, we can now prove another lemma regarding the divisibility of $U_t(x)$ and $W_t(x)$ polynomials that is analogous to Lemma \ref{weird_lemma_1}.
\begin{lemma}\label{weird_lemma_3}
    For any even $t \ge 4$ and each $\beta \ge 6$, neither $U_t(x)$ nor $W_t(x)$ can be divisible by a polynomial $V(x) \in \mathbb{Q}[x]$ with at least two non-zero terms such that all of its terms have powers divisible by $\beta$.
\end{lemma}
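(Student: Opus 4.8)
The plan is to mimic, almost verbatim, the argument used to establish the $Q_t(x)$ portion of Lemma~\ref{weird_lemma_1}, since the polynomials $U_t(x)$ and $W_t(x)$ share the common exponent set $M_t$, and Lemma~\ref{unique_remainder_2} furnishes exactly the kind of ``unique remainder'' guarantee for $M_t$ that Lemma~\ref{unique_remainder_1} provided for $L'_t$ and $L''_t$. Fix an even $t \ge 4$ and a $\beta \ge 6$, and suppose toward a contradiction that some $V(x) \in \mathbb{Q}[x]$ with at least two non-zero terms, all of whose exponents are divisible by $\beta$, divides $U_t(x)$; the case of $W_t(x)$ is handled identically.

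First I would write $U_t(x) = V(x)\, V_1(x)$ and, for each $j = \overline{0, \beta-1}$, let $U_t^{(\beta,j)}(x)$ denote the sum of those terms of $U_t(x)$ whose exponent is congruent to $j$ modulo $\beta$, with $V_1^{(\beta,j)}(x)$ defined analogously. Because every exponent appearing in $V(x)$ is divisible by $\beta$, multiplying $V(x)$ by $V_1^{(\beta,j)}(x)$ produces only terms whose exponents lie in the residue class $j$ modulo $\beta$; collecting the terms of the product by residue class therefore yields $U_t^{(\beta,j)}(x) = V(x)\, V_1^{(\beta,j)}(x)$ for every $j$, whence $V(x) \mid U_t^{(\beta,j)}(x)$ in each case.

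The crux is then Lemma~\ref{unique_remainder_2}: for $\beta \ge 6$ there exists an element $a \in M_t$ whose remainder modulo $\beta$ is unique within $M_t$. The residue class of $a$ thus contains exactly one exponent of $U_t(x)$, so the corresponding slice has the form $U_t^{(\beta, j)}(x) = c\, x^a$ for some $c \in \mathbb{Z} \setminus \{0\}$. Combined with $V(x) \mid c\,x^a$, this forces $V(x)$ to consist of a single monomial, contradicting the assumption that it has at least two non-zero terms. Since $W_t(x)$ is supported on the very same set $M_t$, the identical chain of reasoning applies to it word for word.

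I expect the proof to be genuinely routine, and in fact cleaner than that of Lemma~\ref{weird_lemma_1}: there, the $R_t(x)$ case required a separate auxiliary computation to dispose of the exceptional residue $\beta \mid t$, precisely because Lemma~\ref{unique_remainder_1} fails to guarantee a unique remainder for $L''_t$ in that situation. Here no such obstruction arises, because Lemma~\ref{unique_remainder_2} holds unconditionally for all $\beta \ge 6$ --- the bound $\beta \ge 6$ being exactly what is needed to force the six consecutive exponents $t-3, t-2, t-1, t, t+1, t+2$ to occupy distinct residue classes. Consequently a single uniform argument settles both $U_t(x)$ and $W_t(x)$ with no case analysis, and the only thing to be careful about is simply confirming that both polynomials are indeed supported exactly on $M_t$.
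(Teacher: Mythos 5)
Your proposal is correct and follows essentially the same route as the paper, which proves this lemma by declaring it ``absolutely analogous'' to the $Q_t(x)$ case of Lemma~\ref{weird_lemma_1} with Lemma~\ref{unique_remainder_2} substituted for Lemma~\ref{unique_remainder_1}; your write-up simply fills in the residue-class decomposition that the paper omits. Your observation that the $\beta \mid t$ edge case disappears here, and your care about the merged exponents when $t = 4$ (where the unique-remainder element still lies among the six consecutive middle powers, each carrying a single non-zero term), are both accurate.
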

\begin{proof}
    This lemma can be proved in an absolutely analogous manner as Lemma~\ref{weird_lemma_1}. The only difference is that Lemma \ref{unique_remainder_2} is implemented in place of Lemma \ref{unique_remainder_1}. For this reason, we choose to omit the proof details.
\end{proof}

In a similar manner as done so in Section \ref{section_4}, we now investigate the divisibility of $U_t(x)$ and $W_t(x)$ polynomials by cyclotomic polynomials. By directly implementing Lemma \ref{weird_lemma_3}, we are able to prove the following result.
\begin{lemma}\label{weird_lemma_4}
    For each even $t \ge 4$, if $\Phi_b(x) \mid U_t(x)$ or $\Phi_b(x) \mid W_t(x)$ hold for some $b \ge 3$, we then necessarily have
    \begin{itemize}
        \item $p^2 \nmid b$ for any prime number $p \ge 7$;
        \item $5^3 \nmid b$, $3^3 \nmid b$;
        \item if $2^2 \mid b$, then $b \in \{4, 8 \}$.
    \end{itemize}
\end{lemma}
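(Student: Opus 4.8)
The plan is to mirror the proof of Lemma \ref{weird_lemma_2}, with $U_t, W_t$ playing the roles of $Q_t, R_t$ and Lemma \ref{weird_lemma_3} replacing Lemma \ref{weird_lemma_1}. The prime-power restrictions follow immediately. Recall that whenever $p^k \mid b$ with $k \ge 2$, one has $\Phi_b(x) = \Phi_{b/p^{k-1}}\!\left(x^{p^{k-1}}\right)$, so every term of $\Phi_b(x)$ has exponent divisible by $p^{k-1}$. Thus $p^2 \mid b$ with $p \ge 7$ forces all exponents of $\Phi_b(x)$ to be divisible by $p \ge 7 \ge 6$, while $5^3 \mid b$ or $3^3 \mid b$ forces divisibility by $25$ or $9$, both at least $6$. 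Since $\Phi_b(x)$ always has at least two non-zero terms, Lemma \ref{weird_lemma_3} then rules out $\Phi_b(x) \mid U_t(x)$ and $\Phi_b(x) \mid W_t(x)$ in each case, yielding a contradiction. This disposes of the first two bullet points.

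The only genuine work is the $2$-part, where the conclusion is weakened from ``$2^2 \nmid b$'' (as for $Q_t, R_t$) to ``$2^2 \mid b \Rightarrow b \in \{4,8\}$'', reflecting that $\Phi_4(x) = x^2+1$ and $\Phi_8(x) = x^4+1$ may genuinely occur. Assume $4 \mid b$. Then $\Phi_b(x) = \Phi_{b/2}(x^2)$ consists solely of even-degree terms, so, exactly as in Lemma \ref{weird_lemma_2}, $\Phi_b(x)$ must divide the even part and the odd part of $U_t(x)$ (resp.\ $W_t(x)$) separately. Since $t$ is even, the even part of $U_t$ is $-x^{t+2}-3x^{t}-x^{t-2}-x^{t-4}-2$ and the odd part is $2x^{2t-1}+x^{t+3}+x^{t+1}+3x^{t-1}+x^{t-3}$. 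Writing $y=x^2$ and $s=\tfrac{t-4}{2}$, the even part becomes $\tilde A(y)=-y^{s}(y^3+3y^2+y+1)-2$, while dividing the odd part by $x$ and removing the common factor $y^s$ (legitimate since $\Phi_{b/2}(0)\neq 0$) yields $\tilde B(y)=2y^{s+3}+(y^3+y^2+3y+1)$; both are divisible by $\Phi_{b/2}(y)$.

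The decisive step is to take a fixed $\mathbb{Q}[y]$-combination annihilating the $s$-dependent block. For $U_t$ one verifies the polynomial identity
\[
    2y^3\,\tilde A(y) + (y^3+3y^2+y+1)\,\tilde B(y) = (y+1)^4(y^2+1),
\]
which holds for every $s \ge 0$ because the contributions $\pm 2y^{s+3}(y^3+3y^2+y+1)$ cancel identically. An entirely analogous computation for $W_t$, whose reduced even and odd parts are $y^{s}(y^3-y^2+y+1)-2$ and $2y^{s+3}-(y^3+y^2-y+1)$, produces
\[
    2y^3\bigl(y^{s}(y^3-y^2+y+1)-2\bigr) - (y^3-y^2+y+1)\bigl(2y^{s+3}-(y^3+y^2-y+1)\bigr) = (y-1)^2(y+1)^2(y^2+1).
\]
In either case $\Phi_{b/2}(y)$ divides a polynomial whose only irreducible cyclotomic factors are among $\Phi_1, \Phi_2, \Phi_4$, so $b/2 \in \{1,2,4\}$; combined with $4 \mid b$ this forces $b \in \{4,8\}$, as claimed.

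I expect the main obstacle to be locating the correct multipliers so that the combination collapses to a \emph{fixed}, $t$-independent polynomial, and then confirming that this identity survives the term-mergers occurring at the boundary case $t=4$ (where $s=0$ and the exponents satisfy $2t-1=t+3$ and $t-4=0$). The point to verify carefully is that the cancellation above is a genuine polynomial identity in $y$ in which $s$ enters only through the factor $y^{s+3}$; consequently it holds uniformly for all even $t \ge 4$, and no separate treatment of $t=4$ is required.
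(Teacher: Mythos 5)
Your proposal is correct and follows essentially the same route as the paper: the first two bullet points via Lemma \ref{weird_lemma_3} together with $\Phi_b(x)=\Phi_{b/p^{k-1}}(x^{p^{k-1}})$, and the case $4\mid b$ via the even/odd splitting of $U_t$ and $W_t$ followed by a fixed linear combination collapsing to $(y+1)^4(y^2+1)$ and $(y-1)^2(y+1)^2(y^2+1)$ — which, under $y=x^2$, are exactly the paper's polynomials $(x^2+1)^4(x^4+1)$ and $(x-1)^2(x+1)^2(x^2+1)^2(x^4+1)$ obtained from the combinations $(x^6+3x^4+x^2+1)A(x)+2x^{t+3}B(x)$. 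Both identities you state check out, and your observation that the $t=4$ term-mergers do not affect the argument is accurate.
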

\begin{proof}
    The proof of this lemma can be carried out in a manner that is almost entirely analogous to the proof of Lemma \ref{weird_lemma_2}. To be more precise, the results $p^2 \nmid b$ for any prime $p \ge 7$, $5^3 \nmid b$ and $3^3 \nmid b$ can all be shown by simply implementing Lemma \ref{weird_lemma_3} together with the same idea used in the aforementioned proof of Lemma~\ref{weird_lemma_2}. Thus, we decide to leave out this part of the proof and focus solely on demonstrating $4 \mid b \implies b \in \{ 4, 8 \}$. We will do this separately for $U_t(x)$ and $W_t(x)$ by splitting the remaining piece of the problem into two corresponding cases.

    \bigskip\noindent
    \emph{Case $U_t(x)$}.\quad
    For a given even $t \ge 4$, suppose that $\Phi_b(x) \mid U_t(x)$ for some $b \in \mathbb{N}$ such that $4 \mid b$. Since the integers $2t-1, t+3, t+1, t-1, t-3$ are odd, while $t+2, t, t-2, t-4, 0$ are even, it is easy to use the same logic displayed in the proof of Lemma \ref{weird_lemma_2} in order to deduce that
    \begin{align*}
        \Phi_b(x) &\mid 2x^{2t-1} + x^{t+3}  + x^{t+1}  + 3x^{t-1} + x^{t-3},\\
        \Phi_b(x) &\mid - x^{t+2} - 3x^t - x^{t-2} - x^{t-4} - 2 .
    \end{align*}
    If we denote
    \begin{align*}
        A(x) &= 2x^{2t-1} + x^{t+3}  + x^{t+1}  + 3x^{t-1} + x^{t-3},\\
        B(x) &= - x^{t+2} - 3x^t - x^{t-2} - x^{t-4} - 2,
    \end{align*}
    we immediately obtain
    \begin{alignat*}{2}
        && \Phi_b(x) &\mid (x^6 + 3x^4 + x^2 + 1) \, A(x) + 2x^{t+3} \, B(x)\\
        \implies \quad && \Phi_b(x) &\mid x^{t+9} + 4x^{t+7} + 7x^{t+5} + 8x^{t+3} + 7x^{t+1} + 4x^{t-1} + x^{t-3}\\
        \implies \quad && \Phi_b(x) &\mid x^{t-3} \, (x^2+1)^4 (x^4 + 1)\\
        \implies \quad && \Phi_b(x) &\mid (x^2+1)^4 (x^4 + 1) .
    \end{alignat*}
    From here, it follows that any $b$-th primitive root of unity must also be a root of at least one of the two polynomials $x^2 + 1, x^4 + 1 \in \mathbb{Q}[x]$. Hence, $b \in \{4, 8 \}$.

    \bigskip\noindent
    \emph{Case $W_t(x)$}.\quad
    In this case, let $t \ge 4$ be some even integer and let $\Phi_b(x) \mid W_t(x)$ be true for some $b \in \mathbb{N}$ such that $4 \mid b$. In an absolutely analogous way as in the previous case, we conclude that
    \begin{align*}
        \Phi_b(x) &\mid 2x^{2t-1} - x^{t+3} - x^{t+1} + x^{t-1} - x^{t-3},\\
        \Phi_b(x) &\mid x^{t+2} - x^t + x^{t-2} + x^{t-4} - 2 .
    \end{align*}
    By denoting
    \begin{align*}
        A(x) &= 2x^{2t-1} - x^{t+3} - x^{t+1} + x^{t-1} - x^{t-3},\\
        B(x) &= x^{t+2} - x^t + x^{t-2} + x^{t-4} - 2,
    \end{align*}
    we swiftly get
    \begin{alignat*}{2}
        && \Phi_b(x) &\mid (-x^6 + x^4 - x^2 - 1) \, A(x) + 2x^{t+3} \, B(x)\\
        \implies \quad && \Phi_b(x) &\mid x^{t+9} - x^{t+5} - x^{t+1} + x^{t-3}\\
        \implies \quad && \Phi_b(x) &\mid x^{t-3} \, (x-1)^2 (x+1)^2 (x^2+1)^2 (x^4 + 1)\\
        \implies \quad && \Phi_b(x) &\mid (x^2+1)^2 (x^4 + 1) .
    \end{alignat*}
    Thus, once again we see that any $b$-th primitive root of unity must also be a root of at least one of the two polynomials $x^2 + 1, x^4 + 1 \in \mathbb{Q}[x]$, from which we quickly obtain $b \in \{4, 8\}$, as desired.
\end{proof}

Lemma \ref{weird_lemma_4} tells us that only certain cyclotomic polynomials could divide the $U_t(x)$ and $W_t(x)$ polynomials. In fact, except potentially the four polynomials $\Phi_1(x), \Phi_2(x), \Phi_4(x), \Phi_8(x)$, no other cyclotomic polynomial can divide $U_t(x)$ or $W_t(x)$, for each even $t \ge 4$. We shall now prove this claim by strongly relying on the next two auxiliary lemmas that greatly resemble the previously disclosed Lemmas~\ref{p_2p_1} and \ref{concrete_b_1}.

\begin{lemma}\label{p_2p_2}
    For each even $t \ge 4$ and each prime number $p \ge 11$, neither $U_t(x)$ nor $W_t(x)$ can be divisible by $\Phi_p(x)$ or $\Phi_{2p}(x)$.
\end{lemma}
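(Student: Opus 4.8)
The plan is to mirror the argument of Lemma~\ref{p_2p_1} almost verbatim, exploiting the facts that $\Phi_p(x) = \sum_{j=0}^{p-1} x^j$ and $\Phi_{2p}(x) = \sum_{j=0}^{p-1}(-1)^j x^j$ both have degree $p-1$, and that $U_t(x)$ and $W_t(x)$ each carry at most $10$ non-zero terms. Since the two polynomials are handled identically, I would present the argument for $U_t(x)$ and remark that $W_t(x)$ follows in the same way. First I would form the reduced polynomial $U_t^{\bmod p}(x)$ obtained by replacing every exponent of $U_t(x)$ by its residue modulo $p$, so that $\Phi_p(x) \mid U_t(x)$ is equivalent to $\Phi_p(x) \mid U_t^{\bmod p}(x)$. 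Because $\deg U_t^{\bmod p} \le p-1 = \deg \Phi_p$, the divisibility leaves only two possibilities: either $U_t^{\bmod p}(x) \equiv 0$, or $U_t^{\bmod p}(x) = c\,\Phi_p(x)$ for some $c \in \mathbb{Q}\setminus\{0\}$.

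The first possibility is ruled out by Lemma~\ref{unique_remainder_2}: since $p \ge 11 \ge 6$, the set $M_t$ contains an element whose residue modulo $p$ is unique within $M_t$, so the corresponding monomial of $U_t^{\bmod p}(x)$ cannot be cancelled and $U_t^{\bmod p}(x) \not\equiv 0$. I would stress here that, in contrast to the proof of Lemma~\ref{p_2p_1}, there is no awkward $p \mid t$ edge case to dispose of, precisely because Lemma~\ref{unique_remainder_2} is stated without any divisibility restriction on $\beta$; this is what makes the present proof slightly cleaner than that of Lemma~\ref{p_2p_1}.

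To dispose of the second possibility I would split on the size of $p$. Since $\Phi_p(x)$ has $p-1$ non-zero terms, for $p \ge 13$ we need at least $12$ non-zero terms on the left, whereas $U_t^{\bmod p}(x)$ has at most $10$, a contradiction. The borderline value $p = 11$ is the only genuine obstacle, and only for $t \ge 6$ (when $t = 4$ the polynomials carry just $8$ terms, fewer than the $10$ of $\Phi_{11}$): here $\Phi_{11}(x)$ also has exactly $10$ terms, so matching forces $U_t^{\bmod p}(x)$ to have all $10$ residues distinct, whence its coefficient multiset coincides with that of $U_t(x)$, namely $\{2,1,-1,1,-3,3,-1,1,-1,-2\}$. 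These coefficients are not all equal, while every coefficient of $c\,\Phi_{11}(x)$ equals $c$, so $U_t^{\bmod p}(x) = c\,\Phi_{11}(x)$ is impossible.

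The case of $\Phi_{2p}(x)$ is then treated identically after replacing $U_t^{\bmod p}$ by the signed reduction $\hat U_t^{\bmod p}$ carrying factors $(-1)^{\lfloor \cdot/p\rfloor}$ exactly as in Lemma~\ref{p_2p_1}; for $p = 11$ the coefficient argument now compares against $c\,\Phi_{22}(x)$, whose coefficients all have absolute value $|c|$, and this fails because the reduction of $U_t(x)$ still contains coefficients of absolute value $1$ and $2$. Finally, the very same reasoning applied to $W_t(x)$, whose coefficient multiset is $\{2,-1,1,-1,-1,1,1,-1,1,-2\}$ (again neither constant nor of constant absolute value), completes the proof. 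I expect the entire substance of the argument to reduce to this single borderline case $p = 11$, where the crude term-count bound is tight and must be upgraded to the coefficient comparison.
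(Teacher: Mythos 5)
Your proof is correct and is precisely the argument the paper has in mind: the paper's own proof of Lemma~\ref{p_2p_2} simply states that it is ``absolutely analogous'' to that of Lemma~\ref{p_2p_1} with Lemma~\ref{unique_remainder_2} replacing Lemma~\ref{unique_remainder_1}, and you have carried out that analogy faithfully, including the correct observations that the $p \mid t$ edge case disappears and that the only tight case is $p = 11$ with $t \ge 6$, where the coefficient comparison against $c\,\Phi_{11}(x)$ and $c\,\Phi_{22}(x)$ finishes the job.
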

\begin{proof}
    This lemma can be proved in an absolutely analogous manner as Lemma~\ref{p_2p_1}, the only difference being that Lemma \ref{unique_remainder_2} is used in place of Lemma \ref{unique_remainder_1}. For this reason, we choose to omit the proof details.
\end{proof}
\newpage
\begin{lemma}\label{concrete_b_2}
    For each even $t \ge 4$, neither $U_t(x)$ nor $W_t(x)$ can be divisible by a cyclotomic polynomial $\Phi_b(x)$ where $b \ge 3$ is a positive integer such that
    \begin{itemize}
        \item it does not have any prime factors outside of the set $\{2, 3, 5, 7\}$;
        \item it does not contain all the prime factors from the set $\{3, 5, 7 \}$;
        \item $2^2 \nmid b$, $3^3 \nmid b$, $5^3 \nmid b$, $7^2 \nmid b$.
    \end{itemize}
\end{lemma}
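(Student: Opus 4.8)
The plan is to follow the proof of Lemma \ref{concrete_b_1} almost verbatim, with the pair $(U_t, W_t)$ playing the role of $(Q_t, R_t)$. First I would introduce the reduced polynomials $U_t^{\bmod b}(x)$ and $W_t^{\bmod b}(x)$, obtained from $U_t(x)$ and $W_t(x)$ by replacing each exponent $k$ with its remainder $k \bmod b$. Since $\Phi_b(x) \mid x^b - 1$, we have $x^k \equiv x^{k \bmod b} \pmod{\Phi_b(x)}$, so that $\Phi_b(x) \mid U_t(x)$ holds if and only if $\Phi_b(x) \mid U_t^{\bmod b}(x)$ does, and similarly for $W_t(x)$.

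Next I would note that every exponent in $U_t(x)$ and $W_t(x)$ is an affine function of $t$ with integer coefficients, so each reduced polynomial depends only on the residue $t \bmod b$. Consequently, for a fixed modulus $b$ there are only finitely many distinct polynomials $U_t^{\bmod b}(x)$ and $W_t^{\bmod b}(x)$ as $t$ ranges over the even integers $\ge 4$, and it suffices to confirm $\Phi_b(x) \nmid U_t^{\bmod b}(x)$ and $\Phi_b(x) \nmid W_t^{\bmod b}(x)$ for every residue class of $t$ modulo $b$.

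The finiteness of the entire verification then follows from the three constraints placed on $b$: writing $b = 2^{a_2} 3^{a_3} 5^{a_5} 7^{a_7}$, they force $a_2 \le 1$, $a_3 \le 2$, $a_5 \le 2$, and $a_7 \le 1$, with $a_3, a_5, a_7$ not all simultaneously positive. Only finitely many integers $b \ge 3$ meet these requirements, so the statement reduces to a finite family of divisibility checks --- one for each admissible $b$ and each residue $t \bmod b$ --- which I would discharge by computer and record in an appendix, exactly as is done in Appendices \ref{qt_inspection} and \ref{rt_inspection} for $Q_t$ and $R_t$.

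I expect no genuine conceptual obstacle, since Lemmas \ref{weird_lemma_4}, \ref{p_2p_2}, and \ref{unique_remainder_2} have already confined the possible cyclotomic divisors of $U_t$ and $W_t$ to this restricted pool of moduli. The only real care lies in the bookkeeping: ensuring that the list of admissible $b$ is genuinely exhaustive and that the modular reductions are performed correctly. The main practical burden is simply the size of the finite check, which is best delegated to a machine rather than attempted by hand.
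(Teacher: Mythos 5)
Your proposal matches the paper's approach exactly: the paper proves Lemma~\ref{concrete_b_2} by declaring it ``absolutely analogous'' to Lemma~\ref{concrete_b_1}, i.e.\ by reducing exponents modulo $b$, observing that only the residue $t \bmod b$ matters, enumerating the finitely many admissible $b$ (there are $26$ of them), and delegating the divisibility checks to a computer, with the results recorded in Appendix~\ref{uwt_inspection}. No discrepancies to report.
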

\begin{proof}
    This lemma can be proved in an absolutely analogous manner as Lemma \ref{concrete_b_1}, hence we choose the leave out the proof details. The corresponding computational results can be found in Appendix \ref{uwt_inspection}.
\end{proof}

We will now prove the previously mentioned statement regarding the divisibility of $U_t(x)$ and $W_t(x)$ polynomials by cyclotomic polynomials. In order to accomplish this, we shall strongly rely on Theorem \ref{filaseta} in a similar way as we have already done so while proving Lemmas \ref{qt_cyclotomic} and \ref{rt_cyclotomic}.

\begin{lemma}\label{uwt_cyclotomic}
    For each even $t \ge 4$ and each positive integer $b \in \mathbb{N}$ such that $b \notin \{1, 2, 4, 8\}$, neither $U_t(x)$ nor $W_t(x)$ are divisible by $\Phi_b(x)$.
\end{lemma}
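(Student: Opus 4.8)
The plan is to mirror the structure already established for the pairs $(Q_t, R_t)$ in Lemmas \ref{qt_cyclotomic} and \ref{rt_cyclotomic}, now applying it to $U_t(x)$ and $W_t(x)$ simultaneously. I would begin by supposing, for contradiction, that $\Phi_b(x) \mid U_t(x)$ (or $\Phi_b(x) \mid W_t(x)$) for some even $t \ge 4$ and some $b \notin \{1, 2, 4, 8\}$. Since Lemma \ref{weird_lemma_4} already rules out $2^2 \mid b$ unless $b \in \{4, 8\}$, such a $b$ cannot be a pure power of two, and therefore must have an odd prime factor. The argument then splits into two cases according to whether $b$ is divisible by some prime from the set $\{3, 5, 7\}$, exactly as in the earlier lemmas.

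In the first case, where $3 \nmid b \land 5 \nmid b \land 7 \nmid b$, I would note that $b$ must possess at least one prime factor $p \ge 11$. Since both $U_t(x)$ and $W_t(x)$ have at most $10$ non-zero terms, and $p - 2 \ge 11 - 2 = 9 > 10 - 2$ fails by exactly one, I would need to be slightly careful: for $p = 11$ we have $p - 2 = 9 = N - 2$ when $N = 10$, which does not satisfy the strict inequality in Theorem \ref{filaseta}. However, repeatedly invoking Theorem \ref{filaseta} with a single prime $p \ge 11$ still works because a lone prime with $p - 2 > N - 2$ is required, so I would instead apply the theorem to cancel all prime factors exceeding $11$ first, then handle the residual $b'$ which is either $p$ or $2p$ for a prime $p \ge 11$ directly via Lemma \ref{p_2p_2}. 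This yields the contradiction.

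In the second case, where $3 \mid b \lor 5 \mid b \lor 7 \mid b$, I would use Theorem \ref{filaseta} to strip away every prime divisor exceeding $7$, arriving at $\Phi_{b'}(x) \mid U_t(x)$ (respectively $W_t(x)$) for a $b' \ge 3$ whose prime factors lie in $\{2, 3, 5, 7\}$ and which satisfies $2^2 \nmid b'$, $3^3 \nmid b'$, $5^3 \nmid b'$, $7^2 \nmid b'$ by Lemma \ref{weird_lemma_4}. Since $(3-2) + (5-2) + (7-2) = 9 > 10 - 2 = 8$, Theorem \ref{filaseta} permits the further assumption, without loss of generality, that $b'$ omits at least one prime from $\{3, 5, 7\}$. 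At this point Lemma \ref{concrete_b_2} immediately forbids $\Phi_{b'}(x)$ from dividing either polynomial, completing the contradiction.

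The main obstacle I anticipate is the boundary arithmetic with $N = 10$ and $p = 11$: because $U_t$ and $W_t$ carry one more non-zero term than a naive count might suggest (and because $t = 4$ degenerates to $8$ terms while $t \ge 6$ gives $10$), the inequality $\sum (p_j - 2) > N - 2$ in Theorem \ref{filaseta} sits right at its threshold in the pure prime-power reduction. I would therefore take care to organize the reduction so that Theorem \ref{filaseta} is only ever applied with a single prime $p \ge 11$ satisfying the strict inequality, deferring the $p$-versus-$2p$ distinction to Lemma \ref{p_2p_2}, rather than attempting to cancel two large primes at once. The $t = 4$ subcase, where the polynomials have fewer terms, only makes the bounds more favourable, so it requires no separate treatment.
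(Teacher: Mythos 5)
Your proposal is correct and follows essentially the same route as the paper's proof: reduce via Theorem \ref{filaseta} and Lemma \ref{weird_lemma_4} to either $b' \in \{p, 2p\}$ for a prime $p \ge 11$ (dispatched by Lemma \ref{p_2p_2}) or to a $b' \ge 3$ supported on $\{2,3,5,7\}$ and missing at least one of $3, 5, 7$ (dispatched by Lemma \ref{concrete_b_2}). The only correction: your concern about the Filaseta threshold rests on a miscalculation --- with $N = 10$ non-zero terms one has $N - 2 = 8$, while $p - 2 \ge 9$ for every prime $p \ge 11$, so the strict inequality $p - 2 > N - 2$ holds outright and no special ordering of the cancellations is required; the paper simply strips the primes greater than $7$ one at a time, exactly as your ``workaround'' ends up doing.
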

\begin{proof}
    Suppose that $\Phi_b(x) \mid U_t(x)$ or $\Phi_b(x) \mid W_t(x)$ for some even $t \ge 4$ and some $b \notin \{1, 2, 4, 8 \}$. We will now finalize the proof via contradiction by splitting the problem into two separate cases depending on whether $b$ is divisible by a prime number from the set $\{3, 5, 7 \}$.

    \bigskip\noindent
    \emph{Case $3 \nmid b \land 5 \nmid b \land 7 \nmid b$}.\quad
    By implementing Lemma \ref{weird_lemma_4}, it becomes evident that $b$ has at least one prime factor greater than $7$, given the fact that $b \notin \{1, 2, 4, 8 \}$. Further on, we see that both $U_t(x)$ and $W_t(x)$ have at most $10$ non-zero terms, which means that Theorem \ref{filaseta} can be applied to any prime $p \ge 11$ in an analogous manner as it was done in the proof of Lemma \ref{qt_cyclotomic}. By cancelling out every single prime divisor of $b$ greater than $7$ until exactly one is left, we conclude that
    \[
        \Phi_{b'}(x) \mid Q_t(x) \quad \lor \quad \Phi_{b'}(x) \mid W_t(x)
    \]
    where $b'$ has a single prime divisor greater than $7$, and is potentially divisible by two as well, but not by four. By virtue of Lemma \ref{weird_lemma_4}, it is not difficult to deduce that $b'$ must either represent a prime number $p \ge 11$ or have the form $2p$ for some prime $p \ge 11$. Either way, Lemma \ref{p_2p_2} swiftly leads us to a contradiction.

    \bigskip\noindent
    \emph{Case $3 \mid b \lor 5 \mid b \lor 7 \mid b$}.\quad
    In this case, Theorem \ref{filaseta} can be applied in an analogous manner in order to cancel out any potential prime divisor of $b$ greater than $7$ until we obtain
    \[
        \Phi_{b'}(x) \mid Q_t(x) \quad \lor \quad \Phi_{b'}(x) \mid W_t(x)
    \]
    for some $b' \in \mathbb{N}$ such that all of its prime factors belong to the set $\{2, 3, 5, 7 \}$ and $3 \mid b' \lor 5 \mid b' \lor 7 \mid b'$. It is clear that $b' \ge 3$. By using Lemma \ref{weird_lemma_4}, we now see that $7^2 \nmid b'$, $5^3 \nmid b'$, $3^3 \nmid b'$, as well as $2^2 \nmid b'$. By virtue of Theorem \ref{filaseta}, we can suppose without loss of generality that $b'$ is not divisible by all the elements from the set $\{3, 5, 7\}$, due to the fact that $(3-2)+(5-2)+(7-2) > 10 - 2$. Taking everything into consideration, we conclude that $b'$ must satisfy the criteria given in Lemma~\ref{concrete_b_2}, which immediately yields a contradiction once more.
\end{proof}

We shall now implement Lemma \ref{uwt_cyclotomic} in order to finalize the proof of Theorem~\ref{main_th_2} in a similar manner as we have done so with Theorem \ref{main_th_1}.

\bigskip\noindent
\emph{Proof of Theorem \ref{main_th_1}}.\quad
Eq.\ (\ref{polynomial_formula}) directly gives us
\[
    P(\zeta) = \left( \zeta^{\frac{n}{4}-1} + \frac{1}{\zeta^{\frac{n}{4}-1}} \right) + \left( \zeta^{\frac{n}{4}+3} + \frac{1}{\zeta^{\frac{n}{4}+3}} \right) + \sum_{j=1}^{t-1}\left( \zeta^j + \frac{1}{\zeta^j}\right) + \sum_{j=\frac{n}{2}-t+1}^{\frac{n}{2}-1}\left( \zeta^j + \frac{1}{\zeta^j}\right),
\]
where $\zeta$ is an arbitrarily chosen $n$-th root of unity different from $1$ and $-1$. From here, we immediately get
\begin{equation}\label{aux_9}
    P(\zeta) = \left( \zeta^{\frac{n}{4}-1} + \frac{1}{\zeta^{\frac{n}{4}-1}} \right) + \left( \zeta^{\frac{n}{4}+3} + \frac{1}{\zeta^{\frac{n}{4}+3}} \right) + \sum_{j=1}^{t-1}\left( \zeta^j + \frac{1}{\zeta^j} + \zeta^{\frac{n}{2} - j} + \frac{1}{\zeta^{\frac{n}{2}-j}} \right) .
\end{equation}
We now divide the problem into two cases depending on whether $\zeta^\frac{n}{2}$ is equal to $1$ or $-1$. We shall finalize the proof by showing that $P(\zeta) \neq 0$ is certainly true in both cases.

\bigskip\noindent
\emph{Case $\zeta^\frac{n}{2} = -1$}.\quad
It is obvious that $\zeta^{\frac{n}{2} - j} = -\dfrac{1}{\zeta^j}$ and $\dfrac{1}{\zeta^{\frac{n}{2} - j}} = -\zeta^j$, from which we get
\[
    \zeta^j + \frac{1}{\zeta^j} + \zeta^{\frac{n}{2} - j} + \frac{1}{\zeta^{\frac{n}{2}-j}} = 0
\]
for any $j = \overline{1, t-1}$. For this reason, Eq.\ (\ref{aux_9}) simplifies to
\[
    P(\zeta) = \zeta^{\frac{n}{4}+3} + \zeta^{\frac{n}{4}-1} + \frac{1}{\zeta^{\frac{n}{4}-1}} + \frac{1}{\zeta^{\frac{n}{4}+3}} .
\]
The condition $P(\zeta) = 0$ now becomes equivalent to
\begin{alignat*}{2}
    && P(\zeta) &= 0\\
    \iff \quad && \zeta^{\frac{n}{4}+3} \left( \zeta^{\frac{n}{4}+3} + \zeta^{\frac{n}{4}-1} + \frac{1}{\zeta^{\frac{n}{4}-1}} + \frac{1}{\zeta^{\frac{n}{4}+3}} \right) &=0\\
    \iff \quad && \zeta^{\frac{n}{2} + 6} + \zeta^{\frac{n}{2} + 2} + \zeta^4 + 1 &= 0\\
    \iff \quad && -\zeta^6 - \zeta^2 + \zeta^4 + 1 &= 0\\
    \iff \quad && -(\zeta-1)(\zeta+1)(\zeta^4 + 1) &= 0\\
    \iff \quad && \zeta^4 &= -1 .
\end{alignat*}
Thus, $P(\zeta) = 0$ holds if and only if $\zeta$ is a primitive eighth root of unity. However, $8 \nmid n$, hence no $n$-th root of unity can be a primitive eighth root of unity. Thus, $P(\zeta) \neq 0$, as desired.

\bigskip\noindent
\emph{Case $\zeta^\frac{n}{2} = 1$}.\quad
In this case, it is clear that $\zeta^{\frac{n}{2} - j} = \dfrac{1}{\zeta^j}$ and $\dfrac{1}{\zeta^{\frac{n}{2} - j}} = \zeta^j$, which immediately leads us to
\[
    \zeta^j + \frac{1}{\zeta^j} + \zeta^{\frac{n}{2} - j} + \frac{1}{\zeta^{\frac{n}{2}-j}} = 2 \left( \zeta^j + \frac{1}{\zeta^j} \right)
\]
for any $j = \overline{1, t-1}$. Bearing this in mind, it is straightforward to see that
\allowdisplaybreaks
\begin{alignat*}{2}
    && P(\zeta) &= 0\\
    \iff \quad && \left( \zeta^{\frac{n}{4}+3} + \zeta^{\frac{n}{4}-1} + \frac{1}{\zeta^{\frac{n}{4}-1}} + \frac{1}{\zeta^{\frac{n}{4}+3}} \right) + 2 \sum_{j=1}^{t-1}\left( \zeta^j + \frac{1}{\zeta^j} \right) &= 0\\
    \iff \quad && \left( \zeta^{\frac{n}{4}+3} + \zeta^{\frac{n}{4}-1} + \frac{1}{\zeta^{\frac{n}{4}-1}} + \frac{1}{\zeta^{\frac{n}{4}+3}} \right) - 2 + 2 \sum_{j=-t+1}^{t-1} \zeta^j &= 0\\
    \iff \quad && \zeta^{t-1} \left( \zeta^{\frac{n}{4}+3} + \zeta^{\frac{n}{4}-1} + \frac{1}{\zeta^{\frac{n}{4}-1}} + \frac{1}{\zeta^{\frac{n}{4}+3}} - 2 + 2 \sum_{j=-t+1}^{t-1} \zeta^j \right) &= 0\\
    \iff \quad && \zeta^{t+\frac{n}{4}+2} + \zeta^{t+\frac{n}{4}-2} + \zeta^{t-\frac{n}{4}} + \zeta^{t-\frac{n}{4}-4} - 2\zeta^{t-1} + 2 \sum_{j=0}^{2t-2} \zeta^j &= 0\\
    \iff \quad && (\zeta - 1) \left( \zeta^{t+\frac{n}{4}+2} + \zeta^{t+\frac{n}{4}-2} + \zeta^{t-\frac{n}{4}} + \zeta^{t-\frac{n}{4}-4} - 2\zeta^{t-1} + 2 \sum_{j=0}^{2t-2} \zeta^j \right) &= 0 ,
\end{alignat*}
which finally means that $P(\zeta) = 0$ must be equivalent to
\begin{align}\label{aux_10}
    \begin{split}
    &\zeta^{t + \frac{n}{4} + 3} - \zeta^{t + \frac{n}{4} + 2} + \zeta^{t + \frac{n}{4} - 1} - \zeta^{t + \frac{n}{4} - 2} + \zeta^{t - \frac{n}{4} + 1} - \zeta^{t - \frac{n}{4}} + \zeta^{t - \frac{n}{4} - 3} - \zeta^{t - \frac{n}{4} - 4}\\
    &\qquad + 2 \zeta^{2t-1} - 2\zeta^t + 2\zeta^{t-1} - 2 = 0 .
    \end{split}
\end{align}
We now split the problem into two separate subcases depending on whether $\zeta^\frac{n}{4}$ is equal to $1$ or $-1$.

\interdisplaylinepenalty=10000
\medskip\noindent
\emph{Subcase $\zeta^{\frac{n}{4}} = 1$}.\quad
In this subcase, the implementation of Eq.\ (\ref{aux_10}) directly gives that $P(\zeta) = 0$ is further equivalent to
\begin{align*}
    &\zeta^{t + 3} - \zeta^{t + 2} + \zeta^{t - 1} - \zeta^{t - 2} + \zeta^{t + 1} - \zeta^{t} + \zeta^{t - 3} - \zeta^{t - 4}\\
    &\qquad + 2 \zeta^{2t-1} - 2\zeta^t + 2\zeta^{t-1} - 2 = 0 ,
\end{align*}
that is
\begin{align*}
    2 \zeta^{2t-1} + \zeta^{t+3} - \zeta^{t + 2} + \zeta^{t + 1} - 3\zeta^{t} + 3 \zeta^{t - 1} - \zeta^{t - 2} + \zeta^{t - 3} - \zeta^{t - 4} - 2 &= 0 .
\end{align*}
In other words, $P(\zeta) = 0$ is equivalent to $\zeta$ being a root of $U_t(x)$. Now, we know that $\zeta \neq 1, -1$ and that $\zeta$ cannot be a primitive eighth root of unity, as discussed earlier. On top of that, $\zeta \neq i, -i$ given the fact that $\frac{n}{2} \equiv_4 2$, hence $i^\frac{n}{2} = (-i)^\frac{n}{2} = -1$. Bearing this in mind, it is clear that if were $P(\zeta) = 0$ were to hold, then $U_t(x)$ would be divisible by some cyclotomic polynomial $\Phi_b(x)$ where $b \notin \{1, 2, 4, 8 \}$. However, this is not possible according to Lemma \ref{uwt_cyclotomic}. For this reason, $P(\zeta) \neq 0$ must hold.

\medskip\noindent
\emph{Subcase $\zeta^{\frac{n}{4}} = -1$}.\quad
In this scenario, Eq.\ (\ref{aux_10}) can be quickly simplified to
\begin{align*}
    &-\zeta^{t + 3} + \zeta^{t + 2} - \zeta^{t - 1} + \zeta^{t - 2} - \zeta^{t + 1} + \zeta^{t} - \zeta^{t - 3} + \zeta^{t - 4}\\
    &\qquad + 2 \zeta^{2t-1} - 2\zeta^t + 2\zeta^{t-1} - 2 = 0 ,
\end{align*}
that is
\begin{align*}
    2 \zeta^{2t-1} - \zeta^{t+3} + \zeta^{t + 2} - \zeta^{t + 1} - \zeta^{t} + \zeta^{t - 1} + \zeta^{t - 2} - \zeta^{t - 3} + \zeta^{t - 4} - 2 &= 0 .
\end{align*}
Thus, we get that $P(\zeta) = 0$ is equivalent to $\zeta$ being a root of $W_t(x)$. By using the analogous logic as in the previous subcase, it is easy to establish that $P(\zeta) = 0$ would imply that $W_t(x)$ is divisible by some cyclotomic polynomial $\Phi_b(x)$ where $b \notin \{1, 2, 4, 8\}$, which is again impossible due to Lemma \ref{uwt_cyclotomic}. Hence, $P(\zeta) = 0$ cannot be true, which completes the proof. \hfill\qed

\section{Conclusion}\label{conclusion}

Theorem \ref{main_theorem} provides the full answer to the question posed by the circulant nut graph order--degree existence problem. It is evident that there exists a clear and rich pattern that the orders and degrees of circulant nut graphs must follow, with the sole exception being the case $(n, d) = (16, 8)$. Bearing this in mind, it now becomes possible to explore other types of nut graphs more easily.

For example, it is clear that each circulant graph is necessarily a Cayley graph, which is, in turn, surely a vertex-transitive graph. For this reason, if we are trying to investigate the existence of Cayley nut graphs or vertex-transitive nut graphs, Theorem \ref{main_theorem} provides a solid starting point. Taking all the aforementioned facts into consideration, we are able to disclose the following corollary.

\begin{corollary}
    Let $d \in \mathbb{N}$ be such that $4 \mid d$, and let $n \in \mathbb{N}$ be such that $2 \mid n$ and
    \begin{itemize}
        \item $n \ge d + 4$ if $8 \nmid d$;
        \item $n \ge d + 6$ if $8 \mid d$;
        \item $(n, d) \neq (16, 8)$.
    \end{itemize}
    There necessarily exists a $d$-regular Cayley nut graph of order $n$, as well as a $d$-regular vertex-transitive nut graph of order $n$.
\end{corollary}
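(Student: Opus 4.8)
The plan is to recognize that the entire substance of the corollary is already carried by Theorem~\ref{main_theorem}, and that the passage from circulant nut graphs to Cayley and vertex-transitive nut graphs is immediate. The first step is to check that the three listed hypotheses on the pair $(n,d)$ are exactly equivalent to the membership $n \in \mathcal{N}_d$. Since $4 \mid d$ and $d \ge 4$, either $8 \nmid d$ or $8 \mid d$. In the former case $d \equiv_8 4$, and the hypothesis $2 \mid n \land n \ge d + 4$ is verbatim the description of $\mathcal{N}_d$ given in Eq.~(\ref{main_theorem_formula}). In the latter case one treats $d \ge 16$ and $d = 8$ separately: for $d \ge 16$ the hypothesis $2 \mid n \land n \ge d + 6$ reproduces $\mathcal{N}_d$ directly, whereas for $d = 8$ the bound $n \ge d + 6 = 14$ together with the exclusion $(n,d) \ne (16,8)$ yields exactly $\{14\} \cup \{ n \in \mathbb{N} \colon 2 \mid n \land n \ge 18 \}$, matching the value of $\mathcal{N}_8$ recorded in Eq.~(\ref{d_is_8}). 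Hence in every admissible case $n \in \mathcal{N}_d$.

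Next I would invoke the definition of $\mathcal{N}_d$ to produce a $d$-regular circulant nut graph $G = \Circ(n, S)$ of order $n$, and then observe that $G$ is the Cayley graph on the cyclic group $\mathbb{Z}_n$ with symmetric connection set $\{ \pm s \colon s \in S \}$. Since the left-translation action of $\mathbb{Z}_n$ on itself furnishes a transitive subgroup of the automorphism group of $G$, the graph $G$ is simultaneously a $d$-regular Cayley nut graph and a $d$-regular vertex-transitive nut graph of order $n$, which is precisely what the corollary asserts.

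I expect no genuine obstacle beyond the bookkeeping in the first step, the one point demanding care being the verification that the isolated exclusion $(n,d) = (16,8)$ correctly accounts for the single ``irregularity'' in $\mathcal{N}_8$. Everything else follows from the standard chain of implications circulant $\Rightarrow$ Cayley $\Rightarrow$ vertex-transitive already noted before the corollary statement.
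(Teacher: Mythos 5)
Your proposal is correct and follows exactly the route the paper intends: the three hypotheses are precisely the description of $\mathcal{N}_d$ from Theorem~\ref{main_theorem} (with the $(16,8)$ exclusion absorbing the lone irregularity in $\mathcal{N}_8$), and the conclusion then follows from the chain circulant $\Rightarrow$ Cayley $\Rightarrow$ vertex-transitive that the paper states immediately before the corollary. Nothing to add.
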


One of the possible ways of extending the research of nut graphs is by considering the order--degree existence problem regarding Cayley nut graphs or vertex-transitive nut graphs, whose conditions are less restrictive than those corresponding to the circulant nut graphs. Bearing this in mind, we end the paper with the following two open problems.

\begin{problem}
    What are all the pairs $(n, d)$ for which there exists a $d$-regular Cayley nut graph of order $n$?
\end{problem}
\begin{problem}
    What are all the pairs $(n, d)$ for which there exists a $d$-regular vertex-transitive nut graph of order $n$?
\end{problem}

\appendix

\section{Inspection for \texorpdfstring{$\Phi_b(x) \nmid Q_t(x)$}{Phib(x) not | Qt(x)}}
\label{qt_inspection}

In this appendix section, we will disclose the computational results that demonstrate $\Phi_b(x) \nmid Q_t(x)$ for all the required values of $b \ge 3$, as stated in Lemma~\ref{concrete_b_1}. First of all, the set of all such values of $b$ can be obtained in a plethora of ways. For example, the following short Python script can be used.
\begin{lstlisting}[language = Python, frame = trBL, escapeinside={(*@}{@*)}, aboveskip=10pt, belowskip=10pt, numbers=left, rulecolor=\color{black}]
import numpy as np


def main():
    part_1 = np.multiply.outer([1, 2], [1, 3, 9, 27]).reshape(-1)
    part_2 = np.multiply.outer([1, 5, 25], [1, 7, 49]).reshape(-1)

    all_of_them = np.multiply.outer(part_1, part_2).reshape(-1)
    all_of_them.sort()
    all_of_them = all_of_them.tolist()

    result = list(filter(lambda item: item % 105 != 0, all_of_them))
    result = list(filter(lambda item: item >= 3, result))

    print(len(result))
    print(result)


if __name__ == "__main__":
    main()
\end{lstlisting}
The said script quickly finds that there exist exactly $46$ values of $b$ that satisfy the given criteria:
\begin{align*}
    \{ &3, 5, 6, 7, 9, 10, 14, 15, 18, 21, 25, 27, 30, 35, 42, 45, 49, 50, 54, 63,\\
    &70, 75, 90, 98, 126, 135, 147, 150, 175, 189, 225, 245, 270, 294,\\
    &350, 378, 441, 450, 490, 675, 882, 1225, 1323, 1350, 2450, 2646 \} .
\end{align*}
For each of these values, it can be determined that $\Phi_b(x) \nmid Q_t^{\bmod b}(x)$ for any possible remainder $t \bmod b$. In order to achieve this, we can use, for example, the following Mathematica command.
\newpage
\begin{lstlisting}[language = Mathematica, frame = trBL, escapeinside={(*@}{@*)}, aboveskip=10pt, belowskip=10pt, numbers=left, rulecolor=\color{black}, morekeywords={CoefficientRules}]
Min[Table[
  Min[Table[
    Length[CoefficientRules[
      PolynomialRemainder[
       2 x^Mod[2 t + 1, b] - 2 x^Mod[2 t - 1, b] + 
        2 x^Mod[2 t - 2, b] + x^Mod[t + 3, b] - x^Mod[t + 2, b] + 
        x^Mod[t - 1, b] - x^Mod[t - 2, b] - 2 x^3 + 2 x^2 - 2, 
       Cyclotomic[b, x], x]]], {t, 0, b - 1}]], {b, {3, 5, 6, 7, 9, 
    10, 14, 15, 18, 21, 25, 27, 30, 35, 42, 45, 49, 50, 54, 63, 70, 
    75, 90, 98, 126, 135, 147, 150, 175, 189, 225, 245, 270, 294, 350,
     378, 441, 450, 490, 675, 882, 1225, 1323, 1350, 2450, 2646}}]]
\end{lstlisting}
This command yields the minimum possible number of non-zero terms that the polynomial $Q_t^{\bmod b}(x) \bmod \Phi_b(x)$ can have, as $b$ ranges through all the necessary values and $t \bmod b$ varies through all the possible remainders. It can be promptly checked that this number is equal to one, which immediately means that the remainder $Q_t^{\bmod b}(x) \bmod \Phi_b(x)$ is never equal to the zero polynomial. From here, we quickly obtain our desired result.

\section{Inspection for \texorpdfstring{$\Phi_b(x) \nmid R_t(x)$}{Phib(x) not | Rt(x)}}
\label{rt_inspection}

Here, we elaborate the computational results showing that $\Phi_b(x) \nmid R_t(x)$ for all the values of $b \ge 3$ given in Lemma~\ref{concrete_b_1}. For starters, the set of all such values of $b$ can be computed, for example, by using the following Python script.
\begin{lstlisting}[language = Python, frame = trBL, escapeinside={(*@}{@*)}, aboveskip=10pt, belowskip=10pt, numbers=left, rulecolor=\color{black}]
import numpy as np


def main():
    part_1 = np.multiply.outer([1, 2], [1, 7, 11, 77]).reshape(-1)
    part_2 = np.multiply.outer([1, 3, 9], [1, 5, 25]).reshape(-1)

    all_of_them = np.multiply.outer(part_1, part_2).reshape(-1)
    all_of_them.sort()
    all_of_them = all_of_them.tolist()

    result = list(filter(lambda item: item % 77 != 0, all_of_them))
    result = list(filter(lambda item: item % 55 != 0, result))
    result = list(filter(lambda item: item >= 3, result))

    print(len(result))
    print(result)


if __name__ == "__main__":
    main()
\end{lstlisting}
The Python script easily concludes that there exist exactly $40$ values of $b$ that satisfy the given criteria:
\begin{align*}
    \{ &3, 5, 6, 7, 9, 10, 11, 14, 15, 18, 21, 22, 25, 30, 33, 35, 42,\\
    &45, 50, 63, 66, 70, 75, 90, 99, 105, 126, 150, 175, 198,\\
    &210, 225, 315, 350, 450, 525, 630, 1050, 1575, 3150 \} .
\end{align*}
For each of these values, it can be determined that $\Phi_b(x) \nmid R_t^{\bmod b}(x)$ regardless of what the remainder $t \bmod b$ is. This can be done, for example, by using the next Mathematica command.
\begin{lstlisting}[language = Mathematica, frame = trBL, escapeinside={(*@}{@*)}, aboveskip=10pt, belowskip=10pt, numbers=left, rulecolor=\color{black}, morekeywords={CoefficientRules}]
Min[Table[
  Min[Table[
    Length[CoefficientRules[
      PolynomialRemainder[
       2 x^Mod[2 t + 1, b] - 2 x^Mod[2 t - 1, b] + 
        2 x^Mod[2 t - 2, b] - x^Mod[t + 3, b] + x^Mod[t + 2, b] - 
        4 x^Mod[t + 1, b] + 4 x^Mod[t, b] - x^Mod[t - 1, b] + 
        x^Mod[t - 2, b] - 2 x^3 + 2 x^2 - 2, Cyclotomic[b, x], 
       x]]], {t, 0, b - 1}]], {b, {3, 5, 6, 7, 9, 10, 11, 14, 15, 18, 
    21, 22, 25, 30, 33, 35, 42, 45, 50, 63, 66, 70, 75, 90, 99, 105, 
    126, 150, 175, 198, 210, 225, 315, 350, 450, 525, 630, 1050, 1575,
     3150}}]]
\end{lstlisting}
The said command computes the minimum possible number of non-zero terms that the polynomial $R_t^{\bmod b}(x) \bmod \Phi_b(x)$ can have, as $b$ ranges through all the required values and $t \bmod b$ varies through all the possible remainders. The computation output is equal to one, hence we obtain our desired result in the same way as in Appendix \ref{qt_inspection}.

\section{Inspection for \texorpdfstring{$\Phi_b(x) \nmid U_t(x)$}{Phib(x) not | Ut(x)} and \texorpdfstring{$\Phi_b(x) \nmid W_t(x)$}{Phib(x) not | Wt(x)}}
\label{uwt_inspection}

We can use an analogous mechanism to disclose the computational results that demonstrate $\Phi_b(x) \nmid U_t(x)$, as well as $\Phi_b(x) \nmid W_t(x)$, for all the values of $b \ge 3$ stated in Lemma \ref{concrete_b_2}. The set of all the required values of $b$ can be determined, for example, by using the following Python script.
\begin{lstlisting}[language = Python, frame = trBL, escapeinside={(*@}{@*)}, aboveskip=10pt, belowskip=10pt, numbers=left, rulecolor=\color{black}]
import numpy as np


def main():
    part_1 = np.multiply.outer([1, 2], [1, 3, 9]).reshape(-1)
    part_2 = np.multiply.outer([1, 5, 25], [1, 7]).reshape(-1)

    all_of_them = np.multiply.outer(part_1, part_2).reshape(-1)
    all_of_them.sort()
    all_of_them = all_of_them.tolist()

    result = list(filter(lambda item: item % 105 != 0, all_of_them))
    result = list(filter(lambda item: item >= 3, result))

    print(len(result))
    print(result)


if __name__ == "__main__":
    main()
\end{lstlisting}
The given Python script finds that there exist exactly $26$ values of $b$ that satisfy the given criteria:
\begin{align*}
    \{ &3, 5, 6, 7, 9, 10, 14, 15, 18, 21, 25, 30, 35, 42, 45,\\
    &50, 63, 70, 75, 90, 126, 150, 175, 225, 350, 450 \} .
\end{align*}
For each of these values, it can be promptly shown that $\Phi_b(x) \nmid U_t^{\bmod b}(x)$ and $\Phi_b(x) \nmid W_t^{\bmod b}(x)$ for any possible remainder $t \bmod b$. This can be accomplished, for example, by using the following two Mathematica commands.
\begin{lstlisting}[language = Mathematica, frame = trBL, escapeinside={(*@}{@*)}, aboveskip=10pt, belowskip=10pt, numbers=left, rulecolor=\color{black}, morekeywords={CoefficientRules}]
Min[Table[
  Min[Table[
    Length[CoefficientRules[
      PolynomialRemainder[
       2 x^Mod[2 t - 1, b] + x^Mod[t + 3, b] - x^Mod[t + 2, b] + 
        x^Mod[t + 1, b] - 3 x^Mod[t, b] + 3 x^Mod[t - 1, b] - 
        x^Mod[t - 2, b] + x^Mod[t - 3, b] - x^Mod[t - 4, b] - 2, 
       Cyclotomic[b, x], x]]], {t, 0, b - 1}]], {b, {3, 5, 6, 7, 9, 
    10, 14, 15, 18, 21, 25, 30, 35, 42, 45, 50, 63, 70, 75, 90, 126, 
    150, 175, 225, 350, 450}}]]
\end{lstlisting}
\begin{lstlisting}[language = Mathematica, frame = trBL, escapeinside={(*@}{@*)}, aboveskip=10pt, belowskip=10pt, numbers=left, rulecolor=\color{black}, morekeywords={CoefficientRules}]
Min[Table[
  Min[Table[
    Length[CoefficientRules[
      PolynomialRemainder[
       2 x^Mod[2 t - 1, b] - x^Mod[t + 3, b] + x^Mod[t + 2, b] - 
        x^Mod[t + 1, b] - x^Mod[t, b] + x^Mod[t - 1, b] + 
        x^Mod[t - 2, b] - x^Mod[t - 3, b] + x^Mod[t - 4, b] - 2, 
       Cyclotomic[b, x], x]]], {t, 0, b - 1}]], {b, {3, 5, 6, 7, 9, 
    10, 14, 15, 18, 21, 25, 30, 35, 42, 45, 50, 63, 70, 75, 90, 126, 
    150, 175, 225, 350, 450}}]]
\end{lstlisting}
The given two commands yield the minimum possible number of non-zero terms that the polynomials $U_t^{\bmod b}(x)$ and $W_t^{\bmod b}(x)$ can have, respectively, as $b$ ranges through all the required values and $t \bmod b$ takes on any possible value. Both computation outputs are equal to one, which means that none of the aforementioned remainders are equal to the zero polynomial, as desired.

\section{Roots of certain polynomials}\label{problematic_roots}

In this appendix section, we will demonstrate that none of the following polynomials
\begin{align*}
    Z_1(x) &= x^4 + 2x^3 - 2x^2 + 2x + 1 ,\\
    Z_2(x) &= x^6 - x^4 + 2x^3 - x^2 + 1 ,\\
    Z_3(x) &= x^6 - 2x^5 + 3x^4 - 2x^3 + 3x^2 - 2x + 1 ,\\
    Z_4(x) &= 3x^4 - 2x^2 + 3,\\
    Z_5(x) &= x^2 - 2x - 1,\\
    Z_6(x) &= x^2 + 2x - 1,\\
    Z_7(x) &= 3x^4 + 2x^2 + 3,\\
    Z_8(x) &= 2x^6 - 2x^5 + 3x^4 - 2x^3 + 3x^2 - 2x + 2,\\
    Z_9(x) &= 2x^8 + 2x^7 - x^6 + 2x^5 - 2x^4 + 2x^3 - x^2 + 2x + 2 ,
\end{align*}
contain a root of unity among its roots. This can be swiftly achieved by simply showing that none of them are divisible by any cyclotomic polynomial $\Phi_b(x)$. In fact, it is clear that, for each $j = \overline{1, 9}$, the polynomial $Z_j(x)$ cannot be divisible by a $\Phi_b(x)$ such that $\deg\Phi_b > \deg Z_j$. Thus, in order to prove the desired result, it is sufficient to show that each given polynomial is not divisible by the corresponding cyclotomic polynomials whose degrees do not exceed its own. This is trivial to accomplish via computer.

For starters, it is not difficult to determine all $18$ cyclotomic polynomials whose degree is not above $8$:
\begin{align*}
    \Phi_1(x) &= x - 1, & \Phi_7(x) &= x^6 + x^5 + x^4 + x^3 + x^2 + x + 1,\\
    \Phi_2(x) &= x + 1, & \Phi_9(x) &= x^6 + x^3 + 1,\\
    \Phi_3(x) &= x^2 + x + 1, & \Phi_{14}(x) &= x^6 - x^5 + x^4 - x^3 + x^2 - x + 1,\\
    \Phi_4(x) &= x^2 + 1, & \Phi_{18}(x) &= x^6 - x^3 + 1,\\
    \Phi_6(x) &= x^2 - x + 1, & \Phi_{15}(x) &= x^8 - x^7 + x^5 - x^4 + x^3 - x + 1,\\
    \Phi_5(x) &= x^4 + x^3 + x^2 + x + 1, & \Phi_{16}(x) &= x^8 + 1,\\
    \Phi_8(x) &= x^4 + 1, & \Phi_{20}(x) &= x^8 - x^6 + x^4 - x^2 + 1,\\
    \Phi_{10}(x) &= x^4 - x^3 + x^2 - x + 1, & \Phi_{24}(x) &= x^8 - x^4 + 1,\\
    \Phi_{12}(x) &= x^4 - x^2 + 1, & \Phi_{30}(x) &= x^8 + x^7 - x^5 - x^4 - x^3 + x + 1.
\end{align*}
The necessary computational results can be found on Tables \ref{problematic_roots_1}, \ref{problematic_roots_2}, \ref{problematic_roots_3} and \ref{problematic_roots_4}. The disclosed remainders clearly indicate that no given polynomial can be divisible by any cyclotomic polynomial of interest, as desired.

\begin{table}
\begin{center}
{\scriptsize
\begin{tabular}{lll}
\toprule $b$ & $Z_5(x) \bmod \Phi_b(x)$ & $Z_6(x) \bmod \Phi_b(x)$ \\
\midrule
$1$ & $-2$ & $2$ \\
$2$ & $2$ & $-2$ \\
$3$ & $-2-3 x$ & $-2+x$ \\
$4$ & $-2-2 x$ & $-2+2 x$ \\
$6$ & $-2-x$ & $-2+3 x$ \\
\bottomrule
\end{tabular}
}
\caption{The required remainders of $Z_5(x)$ and $Z_6(x)$.}      
\label{problematic_roots_1}
\end{center}
\end{table}

\begin{table}
\begin{center}
{\scriptsize
\begin{tabular}{llll}
\toprule $b$ & $Z_1(x) \bmod \Phi_b(x)$ & $Z_4(x) \bmod \Phi_b(x)$ & $Z_7(x) \bmod \Phi_b(x)$\\
\midrule
$1$ & $4$ & $4$ & $8$ \\
$2$ & $-4$ & $4$ & $8$ \\
$3$ & $5+5 x$ & $5+5 x$ & $1+x$ \\
$4$ & $4$ & $8$ & $4$ \\
$6$ & $1-x$ & $5-5 x$ & $1-x$ \\
$5$ & $x-3 x^2+x^3$ & $-3 x-5 x^2-3 x^3$ & $-3 x-x^2-3 x^3$ \\
$8$ & $2 x-2 x^2+2 x^3$ & $-2 x^2$ & $2 x^2$ \\
$10$ & $3 x-3 x^2+3 x^3$ & $3 x-5 x^2+3 x^3$ & $3 x-x^2+3 x^3$ \\
$12$ & $2 x-x^2+2 x^3$ & $x^2$ & $5 x^2$ \\
\bottomrule
\end{tabular}
}
\caption{The required remainders of $Z_1(x)$, $Z_4(x)$ and $Z_7(x)$.}      
\label{problematic_roots_2}
\end{center}
\end{table}

\begin{table}
\begin{center}
{\scriptsize
\begin{tabular}{llll}
\toprule $b$ & $Z_2(x) \bmod \Phi_b(x)$ & $Z_3(x) \bmod \Phi_b(x)$ & $Z_8(x) \bmod \Phi_b(x)$\\
\midrule
$1$ & $2$ & $2$ & $4$ \\
$2$ & $-2$ & $14$ & $16$ \\
$3$ & $5$ & $-1$ & $1$ \\
$4$ & $-2 x$ & $-2 x$ & $-2 x$ \\
$6$ & $1$ & $-1$ & $1$ \\
$5$ & $2+2 x+3 x^3$ & $-4-4 x-5 x^3$ & $-3-3 x-5 x^3$ \\
$8$ & $2-2 x^2+2 x^3$ & $-2+2 x^2-2 x^3$ & $-1+x^2-2 x^3$ \\
$10$ & $2-2 x+x^3$ & $x^3$ & $1-x+x^3$ \\
$12$ & $1-2 x^2+2 x^3$ & $-3+6 x^2-4 x^3$ & $-3+6 x^2-4 x^3$ \\
$7$ & $-x-2 x^2+x^3-2 x^4-x^5$ & $-3 x+2 x^2-3 x^3+2 x^4-3 x^5$ & $-4 x+x^2-4 x^3+x^4-4 x^5$ \\
$9$ & $-x^2+x^3-x^4$ & $-2 x+3 x^2-3 x^3+3 x^4-2 x^5$ & $-2 x+3 x^2-4 x^3+3 x^4-2 x^5$ \\
$14$ & $x-2 x^2+3 x^3-2 x^4+x^5$ & $-x+2 x^2-x^3+2 x^4-x^5$ & $x^2+x^4$ \\
$18$ & $-x^2+3 x^3-x^4$ & $-2 x+3 x^2-x^3+3 x^4-2 x^5$ & $-2 x+3 x^2+3 x^4-2 x^5$ \\
\bottomrule
\end{tabular}
}
\caption{The required remainders of $Z_2(x)$, $Z_3(x)$ and $Z_8(x)$.}      
\label{problematic_roots_3}
\end{center}
\end{table}

\begin{table}
\begin{center}
{\scriptsize
\begin{tabular}{ll}
\toprule $b$ & $Z_9(x) \bmod \Phi_b(x)$\\
\midrule
$1$ & $8$ \\
$2$ & $-8$ \\
$3$ & $-x$ \\
$4$ & $4$ \\
$6$ & $5 x$ \\
$5$ & $6+3 x+3 x^2+6 x^3$ \\
$8$ & $6$ \\
$10$ & $2+x-x^2-2 x^3$ \\
$12$ & $5-2 x-5 x^2+4 x^3$ \\
$7$ & $5+5 x+3 x^3-x^4+3 x^5$ \\
$9$ & $3-3 x^2+3 x^3-4 x^4$ \\
$14$ & $1-x+x^3-x^4+x^5$ \\
$18$ & $3-3 x^2+x^3+4 x^5$ \\
$15$ & $4 x-x^2-x^6+4 x^7$ \\
$16$ & $2 x-x^2+2 x^3-2 x^4+2 x^5-x^6+2 x^7$ \\
$20$ & $2 x+x^2+2 x^3-4 x^4+2 x^5+x^6+2 x^7$ \\
$24$ & $2 x-x^2+2 x^3+2 x^5-x^6+2 x^7$ \\
$30$ & $-x^2+4 x^3+4 x^5-x^6$ \\
\bottomrule
\end{tabular}
}
\caption{The required remainders of $Z_9(x)$.}      
\label{problematic_roots_4}
\end{center}
\end{table}

\end{document}